\DeclareMathOperator{\HC}{\mathsf{HC}}
\DeclareMathOperator{\HP}{\mathsf{HP}}
\DeclareMathOperator{\tildeint}{\overset{\,\,\,\,\sim}{\int}}
\DeclareMathOperator{\fink}{\mathsf{fin}}
\newcommand{\vshs}{$\mathsf{VSHS}$}
\newcommand{\vhs}{$\mathsf{VHS}$}
\newcommand{\power}[1]{[\![ #1 ]\!]}
\newcommand{\laurents}[1]{(\!( #1 )\!)}
\def\cM{\mathcal{M}}
\newtheorem{nota}[thm]{Notation}
\begin{document}

\title{Formulae in noncommutative Hodge theory}

\author[Nick Sheridan]{Nick Sheridan}

\address{Nick Sheridan, School of Mathematics, University of Edinburgh, Edinburgh EH9 3FD, UK}

\begin{abstract}
{\sc Abstract:} 
We prove that the cyclic homology of a saturated $A_\infty$ category admits the structure of a `polarized variation of Hodge structures', building heavily on the work of many authors: the main point of the paper is to present complete proofs, and also explicit formulae for all of the relevant structures. 
This forms part of a project of Ganatra, Perutz and the author, to prove that homological mirror symmetry implies enumerative mirror symmetry.
 \end{abstract}

\maketitle

\section{Introduction}

\subsection{Calabi--Yau mirror symmetry}

Mirror symmetry predicts the existence of certain `mirror' pairs of Calabi--Yau K\"{a}hler manifolds, $X$ and $Y$, so that the Gromov--Witten invariants of $X$ can be extracted from certain Hodge-theoretic invariants of $Y$.\footnote{We only consider genus-zero Gromov--Witten invariants in this paper.}
The first thrilling application of mirror symmetry was the prediction of the number of rational curves, in all degrees, on the quintic threefold $X$, in terms of the Hodge theory of a mirror manifold $Y$ \cite{Candelas1991}. 
This prediction for the quintic, together with many more examples of mirror symmetry, was later mathematically verified \cite{Givental1996,Lian1997}. 

The most conceptually satisfying way of formulating the mirror relationship between numerical invariants of $X$ and $Y$ is as an isomorphism of \emph{variations of Hodge structures} (\vhs) \cite{Morrison1993}. 
A \vhs{} over a complex manifold $\cM$ consists of a holomorphic vector bundle $V \to \cM$, equipped with a filtration $F^{\ge *}V$ by holomorphic subbundles, and a flat connection $\nabla$ satisfying the condition $\nabla_v F^{\ge i}V \subset F^{\ge i-1} V$ known as \emph{Griffiths transverality}.\footnote{This is more precisely referred to as a \emph{complex variation of Hodge structures}.}
We introduce the \emph{K\"ahler moduli space} $\cM_{K\ddot{a}h}(X)$, which parametrizes deformations of the K\"ahler form on $X$, and the \emph{complex moduli space} $\cM_{cpx}(Y)$, which parametrizes deformations of the complex structure on $Y$. 
The Gromov--Witten theory of $X$ gets packaged into the \emph{$A$-model \vhs}, which lives over the K\"ahler moduli space: $V^A(X) \to \mathcal{M}_{K\ddot{a}h}(X)$. 
The Hodge theory on deformations of $Y$ gets packaged into the \emph{$B$-model \vhs}, which lives over the complex moduli space: $V^B(Y) \to \cM_{cpx}(Y)$.
\emph{Hodge-theoretic} mirror symmetry then predicts an isomorphism $V^A(X) \cong V^B(Y)$, covering an isomorphism $\cM_{K\ddot{a}h}(X) \cong \cM_{cpx}(Y)$ called the \emph{mirror map}. 
Enumerative mirror symmetry, i.e., the explicit formulae relating the numerical invariants, can be deduced from this isomorphism of \vhs{} (see, e.g., \cite{coxkatz}).

Kontsevich proposed a generalization of Hodge-theoretic mirror symmetry called \emph{homological} mirror symmetry \cite{Kontsevich1994}. 
It predicts a quasi-equivalence of $A_\infty$ categories $ \EuF(X) \simeq \EuD(Y)$, where $\EuF(X)$ is the split-closed derived Fukaya category of $X$, and $\EuD(Y)$ is a $\mathsf{dg}$ enhancement of the bounded derived category of coherent sheaves on $Y$. 
More precisely, one should think of these as families of categories, parametrized by the K\"ahler and complex moduli spaces respectively; and the quasi-equivalence matches the Fukaya category living over a point in K\"ahler moduli space with the derived category living over the corresponding point in complex moduli space, where the correspondence between moduli spaces is given by the same mirror map as before.

Kontsevich also predicted that homological should imply Hodge-theoretic mirror symmetry. 
This was subsequently made more precise \cite{Barannikov2002,Katzarkov2008,Costello2009}. 
The expectation is that the cyclic homology of a family of saturated $A_\infty$ categories carries the structure of a \vhs{}; the cyclic homology of the Fukaya category is isomorphic to $V^A(X)$; and the cyclic homology of the bounded derived category of coherent sheaves is isomorphic to $V^B(Y)$.
Therefore, the equivalence of categories implies the isomorphism of \vhs{}.

This paper forms part of a project of the author, joint with Ganatra and Perutz, to carry out this program.  
Theorem \ref{thm:main} implies roughly that the cyclic homology of a family of saturated $\Z$-graded $A_\infty$ categories carries the structure of a \vhs, and this structure is functorial under $A_\infty$ functors; we also give explicit formulae for all of the relevant structures. 
Ganatra \cite{Ganatra:OCcyc} has defined a map from the cyclic homology of the Fukaya category to the $A$-model \vhs; this map is shown to respect the \vhs{} structure in \cite{Ganatra:OCconn}, using the formulae in the present paper; and the map is shown to be an isomorphism in \cite{Ganatra2015}. 
The corresponding comparison theorem for the $B$-model is known to experts, although not everything is written in the literature; modulo this $B$-model comparison theorem, the proof is complete. 
  
We refer to \cite{Ganatra2015} for precise statements of the results. 
One corollary of them is a new proof of the mirror symmetry predictions for Gromov--Witten invariants of the quintic, as a consequence of the proof of homological mirror symmetry for the quintic \cite{Sheridan2015}. 
The $B$-model comparison theorem has been established in this specific case by Tu \cite{Tu2019}.

\subsection{Fano mirror symmetry} 

Although mirror symmetry was originally formulated for mirror pairs of Calabi--Yau K\"ahler manifolds $(X,Y)$, it admits a generalization in which $X$ is allowed to be Fano. 
In this case the mirror is no longer a manifold, but rather a `Landau--Ginzburg model' $(Y,W)$, which means a variety $Y$ equipped with a function $W:Y \to \C$. 
The Gromov--Witten invariants of $X$ are related to the singularity theory of $W$ \cite{Givental1995}.

Once again, the relation between the numerical invariants can be expressed in terms of an isomorphism $V^A(X) \cong V^B(Y,W)$; however, the structures getting identified are now variations of \emph{semi-infinite} Hodge structures (\vshs). 
This notion was introduced by Barannikov \cite{Barannikov2001}, but the study of the $B$-model \vshs{} associated to $(Y,W)$ goes back to Saito \cite{Saito1983}.

In Section \ref{sec:vhs} we define the notion of a \emph{graded} \vshs. The following notions are equivalent:
\begin{align*}
\text{$\Z/2$-graded \vshs} \quad&\longleftrightarrow\quad \text{\vshs{} in Barannikov's sense;} \\
\text{$\Z$-graded \vshs}\quad& \longleftrightarrow \quad\text{\vhs{} in the sense of the previous section.}
\end{align*}
We refer to \cite[Lemma 2.7]{Ganatra2015} for a precise statement and proof of the latter equivalence, which is a version of the `Rees correspondence' between filtered bundles over $\cM$ and equivariant bundles over $\cM \times \mathbb{A}^1$ \cite{Simpson:HF}. 
Thus, Hodge-theoretic mirror symmetry can always be formulated as an isomorphism of graded \vshs{}; in the Fano case the grading group is $\Z/2$, and in the Calabi--Yau case it is $\Z$. 
For the rest of the paper, `\vshs' will always mean `graded \vshs', with the grading group implicit.

Homological mirror symmetry admits a generalization to the Fano case: roughly, it predicts a quasi-equivalence of $\Z/2$-graded $A_\infty$ categories $\EuD\EuF(X) \simeq MF(Y,W)$, where the latter is the category of \emph{matrix factorizations} of $W$ \cite{orlov04}.  
One expects that it should imply Hodge-theoretic mirror symmetry, by a similar argument to the Calabi--Yau case, but with \vhs{} replaced by \vshs.
 
Our Theorem \ref{thm:main} implies roughly that the cyclic homology of a family of saturated $Y$-graded $A_\infty$ categories, which satisfies the degeneration property, carries the structure of a $Y$-graded \vshs. 
The case $Y=\Z/2$ is the one relevant to Fano mirror symmetry, and the case $Y=\Z$ is the one relevant to Calabi--Yau mirror symmetry. 
In the latter case the degeneration property holds automatically, by Kaledin's proof \cite{Kaledin2017} of Kontsevich--Soibelman's degeneration conjecture \cite[Conjecture 9.1.2]{Kontsevich2006a}. 
In particular, our result implies that the cyclic homology of a family of saturated $\Z$-graded $A_\infty$ categories carries the structure of a $\Z$-graded \vshs{}, which we recall is equivalent to a \vhs{} (we stated this version of the result in the previous section).
We remark that the grading group does not enter into the proof of Theorem \ref{thm:main}: the cases of relevance to Fano and  Calabi--Yau mirror symmetry are handled uniformly.

\begin{rmk}
It should be possible to prove some version of the statement that homological implies Hodge-theoretic mirror symmetry in the Fano case, following the argument in the Calabi--Yau case (and in particular, using the $\Z/2$-graded case of our Theorem \ref{thm:main}).
See \cite{AmorimTu} for some recent progress.
\end{rmk}

\subsection{Standing notation}

Let $\Bbbk \subset \BbK$ be fields.
We will write $\cM := \spec \BbK$, and $T\mathcal{M}:=\deriv_\Bbbk \BbK$. 

We fix a grading group throughout: more precisely, we fix a `grading datum' in the sense of \cite{Sheridan2015}, which is an abelian group $Y$ together with homomorphisms $\Z \to Y \to \Z/2$ whose composition is non-zero.
All of our structures are $Y$-graded; when we talk about an element of degree $k \in \Z$, we really mean its degree is the image of $k$ under the map $\Z \to Y$;  and when we write a Koszul-type sign $(-1)^{|a|}$, it means that the image of the $Y$-degree of $a$, under the map $Y \to \Z/2$, is $|a| \in \Z/2$. 

\begin{rmk}
The two most relevant grading data are $\Z:=\{\Z \xrightarrow{\id} \Z \to \Z/2\}$ and $\Z/2 :=\{\Z \to \Z/2 \xrightarrow{\id} \Z/2\}$; working with the former is equivalent to working with ordinary $\Z$-gradings, while working with the latter is equivalent to working with ordinary $\Z/2$-gradings. 
\end{rmk}

We define $\BbK \power{u}$ to be the graded ring of formal power series in a formal variable $u$ of degree $2$, and $\BbK\laurents{u}$ the graded ring of formal Laurent series. 
For any $f \in \BbK\power{u}$ or $\BbK \laurents{u}$, we denote
\[ f^\star(u) := f(-u).\]

\begin{rmk} 
To be precise, $\BbK\power{u}$ (respectively, $\BbK\laurents{u}$) is the degreewise completion of $\BbK[u]$ (respectively, $\BbK[u,u^{-1}]$) with respect to the $u$-adic filtration. 
If the morphism $\Z \to Y$ is not injective then this includes infinite sums of powers of $u$, but if the morphism is injective then the completion has no effect because all powers of $u$ have different degrees. 
Thus $\BbK\laurents{u}$ contains `semi-infinite' sums of powers of $u$ in the $\Z/2$-graded case; hence Barannikov's terminology.
\end{rmk}

Finally, if $\sigma \in \Z/2$, we denote $\sigma' := \sigma - 1$.

\subsection{Main result}

We define various flavours of \vshs{} in Section \ref{sec:vhs}. 
To give an idea of what they mean, let us explain roughly what they correspond to under the Rees correspondence, in the $\Z$-graded case:
\begin{itemize}
\item An \emph{unpolarized pre-\vshs} over $\cM$ corresponds to an $\mathcal{O}_\cM$-module equipped with a filtration $F^{\ge *}$ and flat connection $\nabla$ satisfying Griffiths transversality. 
\item An \emph{unpolarized \vshs} is an unpolarized pre-\vshs{} such that the $\mathcal{O}_\cM$-module is a finite-rank vector bundle. 
\item A \emph{polarization} for an unpolarized pre-\vshs{} is a covariantly constant pairing $(\cdot,\cdot)$ such that $(F^{\ge j},F^{\ge k}) = 0$ for $j+k <0$, and $(a,b) = (-1)^n(b,a)$ for some $n \in \Z/2$ called the \emph{weight}; a \emph{polarized pre-\vshs} is a pre-\vshs{} equipped with a polarization. 
\item A \emph{polarized \vshs} is a polarized pre-\vshs, such that the $\mathcal{O}_\cM$-module is a finite-rank vector bundle, and the pairing is non-degenerate.
\end{itemize}

\begin{rmk}
Note that a polarized/unpolarized \vshs{} is the same thing as a polarized/unpolarized pre-\vshs{} satisfying additional properties, rather than equipped with additional data. 
\end{rmk}

Our main results concern the Hochschild invariants of $A_\infty$ categories:

\begin{main}\label{thm:main}
Let $\EuC$ be a $\BbK$-linear graded $A_\infty$ category. Then:
\begin{enumerate}
\item \label{it:unpprevshs} Its negative cyclic homology $\HC_\bullet^-(\EuC)$, endowed with the Getzler--Gauss--Manin connection $\nabla$  \cite{Getzler1993}, carries the structure of an unpolarized pre-\vshs{} over $\cM$. 
\item \label{it:polprevshs} If $\EuC$ is proper and admits an $n$-dimensional weak proper Calabi--Yau structure (see Section \ref{subsec:sym} for the definition), then $(\HC_\bullet^-(\EuC),\nabla)$ admits a natural polarization $\langle \cdot, \cdot \rangle_{res}$  of weight $n$, given by Shklyarov's higher residue pairing \cite{Shklyarov2013}. 
\item \label{it:mor} These structures are Morita invariant.
\item \label{it:degen} If $\EuC$ is saturated and its noncommutative Hodge-to-de Rham spectral sequence degenerates, then the polarized pre-\vshs{} $(\HC_\bullet^-(\EuC),\nabla,\langle \cdot,\cdot\rangle_{res})$ is in fact a polarized \vshs. 
\end{enumerate}
\end{main}

\begin{rmk}
A conjecture of Kontsevich and Soibelmann \cite[Conjecture 9.1.2]{Kontsevich2006a} says that the noncommutative Hodge-to-de Rham spectral sequence degenerates for any saturated $\EuC$. 
The conjecture has been proved by Kaledin in the case that $\EuC$ is $\Z$-graded \cite{Kaledin2017} (see also \cite{Mathew2017}); so far as the author is aware it remains open if $\EuC$ is, for example, $\Z/2$-graded.
\end{rmk}

\begin{rmk}
Katzarkov--Kontsevich--Pantev conjecture that the \vshs{} of Theorem \ref{thm:main} can be endowed with a natural $\Q$-structure (see \cite[Section 2.2.6]{Katzarkov2008}, and also \cite{Blanc2016}).
\end{rmk}

Let us comment on the originality of Theorem \ref{thm:main}. 
We believe our contribution ranges from `writing down explicit formulae for known structures with uniform sign conventions, as a handy reference' at the low end, to `checking that these structures have certain natural (but slightly tricky-to-prove) compatibilities' at the high end. 
To be precise: 
\begin{itemize}
\item Our proof of the Morita invariance of the Getzler--Gauss--Manin connection is new. 
\item Shklyarov's construction of the higher residue pairing for $\mathsf{dg}${}  categories \cite{Shklyarov2013} immediately gives the construction for $A_\infty$ categories, because any $A_\infty$ category is quasi-equivalent to a $\mathsf{dg}${}  category via the Yoneda embedding. On the other hand, for an $A_\infty$ category whose morphism spaces are finite-dimensional on the chain level, work of Costello \cite{Costello2007} and Konstevich--Soibelman \cite{Kontsevich2006a} implies that there should be an explicit formula for the pairing. We write down the formula and prove that it is equivalent to Shklyarov's definition (see Proposition \ref{prop:higherresform}). 
Our proof is motivated by Shklyarov's \cite[Proposition 2.6]{Shklyarov2013}. 
\item Using the explicit formula for the higher residue pairing, we establish that it is covariantly constant with respect to the Getzler--Gauss--Manin connection: we believe that this result is also new (a related result was proven by Shklyarov in \cite{Shklyarov2013}, but that was for a different version of Getzler's connection, namely the one in the $u$-direction rather than in the direction of the base).
\end{itemize}  

Now we give a guide, to help the reader find the proofs of the different parts of Theorem \ref{thm:main}. 
Part \eqref{it:unpprevshs} is proved in Section \ref{sec:hoch}. 
Part \eqref{it:polprevshs} is proved in two parts: covariant constancy of the higher residue pairing, with respect to the Getzler--Gauss--Manin connection, is proved in Corollary \ref{cor:hrescov}; and symmetry of the higher residue pairing (which is the only part that requires the weak proper Calabi--Yau structure) is proved in Lemma \ref{lem:nsym}. 
The tools to prove part \eqref{it:mor} are developed in Section \ref{sec:mor}; Morita invariance of negative cyclic homology and the Getzler--Gauss--Manin connection are proved in Corollary \ref{cor:cycGMmor}, and Morita invariance of the higher residue pairing is proved in Proposition \ref{prop:higherresMor}. 
Part \eqref{it:degen} is proved in Theorem \ref{thm:degen}.

The paper involves a lot of long formulae composing multilinear operations in complicated ways, with non-trivial sign factors. 
We explain a graphical notation for these composition rules in Appendix \ref{app:signs}, which allows one to check various identities, with signs, in an efficient way; and we draw the graphical notation for some of the trickiest signs that appear in the paper. 
We omit the proofs of some identities that become trivial using the graphical notation, however we have tried very hard to write down explicitly the correct signs for every operation we define.

\paragraph{Acknowledgments.}
I would like to thank Sheel Ganatra and Tim Perutz, my collaborators on the larger project of which this paper is a part (see \cite{Ganatra2015,Perutz2015,Perutz2015a,Ganatra:OCconn}).
The material in this paper was originally intended to form a background section to one of the papers in that series, but it turned out to be so long, and in such a different direction, that it made sense to split it off. 
Conversations with Ganatra and Perutz, and their suggestions, were extremely helpful in completing the paper. I am also very grateful to Paul Seidel for helpful conversations about the Mukai pairing. 
I thank Lino Amorim and Junwu Tu for pointing out some sign errors in a previous version. 
I thank the anonymous referee for helpful suggestions. 
While working on this project I was partially supported by the National Science Foundation through Grant number
DMS-1310604, and under agreement number DMS-1128155. 
Any opinions, findings and conclusions or recommendations expressed in this material are those of the author and do not necessarily reflect the views of the National Science Foundation.
I was also partially supported by a Royal Society University Research Fellowship. 
I am also grateful to the IAS and the Instituto Superior T\'{e}cnico for hospitality while working on this project.

\section{Variations of semi-infinite Hodge structures: definitions}
\label{sec:vhs}

Variations of semi-infinite Hodge structures (\vshs) were introduced in \cite{Barannikov2001}. 
Here we recall the basic definitions, following \cite[Section 2.2]{Coates2009} and \cite[Chapter 2]{Gross2011}. 
We break with certain conventions in the literature, for which we apologize. 
We point out the places where our conventions differ as we go along.

Recall our standing notation: we fix a grading datum $\Z \to Y \to \Z/2$, and fields $\Bbbk \subset \BbK$, and define $\mathcal{M} := \spec \BbK$, $T\mathcal{M} := \deriv_\Bbbk \BbK$.

\subsection{Pre-\vshs}
\label{subsec:vhsdef}

\begin{defn}
\label{defn:prevhsnopair}
An \emph{unpolarized pre-\vshs} over $\mathcal{M}$ consists of a graded $\BbK\power{u}$-module $\EuE$, equipped with a flat connection\footnote{More precisely, there is a map $u\nabla: T\mathcal{M} \otimes \EuE \to \EuE$, such that $u\nabla_X s$ is $\BbK$-linear in $X$, additive in $s$, satisfying
\begin{align}
\tag{\emph{Leibniz rule}} u\nabla_X(f \cdot s) &= uX(f) \cdot s + f \cdot u\nabla_X s\quad \text{for $f \in \BbK \power{u}$, and}\\
\tag{\emph{flatness}} [u\nabla_X, u\nabla_Y] &= u^2\nabla_{[X,Y]} \quad \text{for all $X,Y \in T\mathcal{M}$.}
\end{align}}  $\nabla: T\mathcal{M} \otimes_\BbK \EuE \to u^{-1} \EuE$ of degree $0$.
\end{defn}

\begin{defn}
\label{defn:prevhspair}
A \emph{polarization} for a pre-\vshs{} is a pairing
\[ (\cdot,\cdot):\EuE \times \EuE  \to \BbK\power{u}\]
of degree $0$, additive in both inputs, and satisfying
\begin{align}
\tag{\emph{sesquilinearity}} (f\cdot s_1,g \cdot s_2) &= f\cdot g^\star \cdot (s_1,s_2) \quad \text{ for $f,g \in \BbK \power{u}$,} \\
\tag{\emph{covariant constance}} uX(s_1,s_2) &= (u\nabla_X s_1,s_2) - (s_1,u\nabla_X s_2), \quad \text{and} \\
\tag{\emph{symmetry}}(s_1,s_2) &= (-1)^{n+|s_1|\cdot|s_2|}(s_2,s_1)^\star, 
\end{align}
where $n \in \Z/2$ is called the \emph{weight}.
\end{defn}

\begin{rmk}
It is not usually assumed that a polarization must have degree $0$: we prefer to shift whatever pre-\vshs{} we are considering so that this is the case. 
In this paper, polarizations will arise from Shklyarov's higher residue pairing on cyclic homology, which has degree $0$ with respect to the standard grading.
\end{rmk}

\begin{defn}
\label{defn:vhsmorph}
A \emph{morphism of pre-\vshs} is a degree-$0$ morphism of $\BbK\power{u}$-modules $F: \EuE_1 \to \EuE_2$ which respects the connections, and, if the pre-\vshs{} are polarized, satisfies $(F(\alpha),F(\beta))_2 = (\alpha,\beta)_1$.
\end{defn}

\subsection{\vshs}

\begin{defn}
\label{defn:vhs}
An \emph{unpolarized \vshs} is an unpolarized pre-\vshs{} such that $\EuE$ is a free $\BbK\power{u}$-module of finite rank.
\end{defn}

\begin{defn}
\label{defn:polvhs}
A \emph{polarization} for a \vshs{} is a polarization for the underlying pre-\vshs, which is furthermore \emph{non-degenerate}: i.e., the pairing of $\BbK$-vector spaces
\[ \EuE /u\EuE  \otimes_\BbK \EuE /u\EuE  \to \BbK\]
induced by $(\cdot,\cdot)$ is non-degenerate.
\end{defn}

A morphism of \vshs{} (polarized or unpolarized) is the same thing as a morphism of the underlying pre-\vshs.

\begin{rmk}
What we call a `$\Z/2$-graded polarized \vshs' is usually simply called a \vshs{}, in particular, the polarization is part of the structure. 
However, the notion of an unpolarized \vshs{} has applications in mirror symmetry so it seems useful to make the distinction. 
\end{rmk}

\begin{rmk}
An unpolarized $\Z$-graded \vshs{} is equivalent to a vector bundle over the $\Bbbk$-scheme $\cM$, equipped with a filtration and flat connection satisfying Griffiths transversality \cite[Lemma 2.7]{Ganatra2015}; in the application to mirror symmetry we take $\Bbbk=\C$. 
The most relevant choice of $\BbK$ for the application to mirror symmetry is $\BbK = \C \laurent{q}$. 
One can think of $\cM = \spec \BbK$ as a `formal punctured disc'. 
Geometrically, one thinks of this formal punctured disc as mapping into the K\"ahler/complex moduli space, with the limit $q \to 0$ corresponding to a `large volume'/`large complex structure' limit point of the moduli space. 
We consider the pullback of the relevant structures (\vshs, categories) to $\cM$. 
We shall define a `family of $A_\infty$ categories parametrized by $\cM$' to be a $\BbK$-linear $A_\infty$ category. 
Thus Theorem \ref{thm:main} constructs a \vshs{} over $\cM$ from a family of $A_\infty$ categories parametrized by $\cM$.
\end{rmk}

\subsection{Euler gradings}

We have assumed that our \vshs{} are graded, in the sense that $\EuE$ is a direct sum of its graded pieces. 
A different notion of `graded \vshs{}' is used in the literature \cite{Barannikov2001,Coates2009}, which we will instead refer to as an `Euler-graded \vshs'. 
We depart from the standard terminology in this way to avoid confusion regarding the usual terminology for $A_\infty$ categories. 
Namely, if $\EuC$ is a \emph{graded} $A_\infty$ category (in the usual sense), which is saturated and whose noncommutative Hodge-to-de Rham spectral sequence degenerates, then Theorem \ref{thm:main} constructs a \emph{graded} \vshs{} in our sense; it does not construct an Euler-graded \vshs{}. 

\begin{defn}
\label{defn:vhseulgrad}
An \emph{Euler grading} on a pre-\vshs{} is a $\Bbbk$-linear endomorphism $\mathsf{Gr}: \EuE \to \EuE$, such that there is a vector field $E \in T\mathcal{M}$ (called the \emph{Euler vector field}) satisfying
\begin{align*}
 \mathsf{Gr}(f \cdot s) &= (2u \partial_u + 2E)f \cdot s + f \cdot \mathsf{Gr}(s) \quad \text{for all $f \in \BbK \power{u}$,}\\
 [ \mathsf{Gr}, \nabla_X] &= \nabla_{[2E,X]},\quad\text{and}\\
 (2u \partial_u + 2E) (s_1,s_2) &= (\mathsf{Gr}(s_1),s_2) + (s_1,\mathsf{Gr}(s_2)).
 \end{align*}
A morphism of Euler-graded pre-\vshs{} is required to satisfy $\mathsf{Gr}_2 \circ F = F \circ \mathsf{Gr}_1$.
\end{defn}

\begin{example}
An $\R$-graded pre-\vshs{} admits an Euler grading, by setting $\mathsf{Gr}(s)  :=  |s| \cdot s$. 
The Euler vector field $E \in T\mathcal{M}$ is the graded derivation $E: \BbK \to \BbK$ defined by the same formula as $\mathsf{Gr}$, multiplied by $\frac{1}{2}$.
\end{example}

\begin{rmk}
For any Euler-graded pre-\vshs{}, we can extend the connection $\nabla$ to a flat connection that is also defined in the $u$-direction, by setting $\nabla_{u\frac{\partial}{\partial u}} := \frac{1}{2}\mathsf{Gr} - \nabla_{E}$.
\end{rmk}

\begin{rmk}
One can also extract an Euler graded \vshs{} from an $A_\infty$ category $\EuC$, if one assumes that the category itself comes with an `Euler grading'. 
Namely, one assumes that there is an Euler vector field $E$ on the coefficient field $\BbK$, and that $\EuC$ is a $\Z/2$-graded $\BbK$-linear $A_\infty$ category. 
Then an Euler grading on $\EuC$ is a map $\mathsf{Gr}$ on the morphism spaces of the category, compatible with the Euler vector field as in Definition \ref{defn:vhseulgrad}, and such that the $A_\infty$ structure maps $\mu^s$ satisfy
\[ \mathsf{Gr} \circ \mu^s = \mu^s \circ \mathsf{Gr} + (2-s) \cdot \mu^s.\]
Our proof of Theorem \ref{thm:main} applies to the $\Z/2$-graded $A_\infty$ category $\EuC$, to produce $\Z/2$-graded Hochschild invariants: and one easily checks that, if $\EuC$ comes with an Euler grading, then all of the Hochschild invariants admit Euler gradings, compatible with all structures. 
This is relevant when one studies mirror symmetry for Fano varieties, but not in the Calabi--Yau case. 
We will not comment further on it in this paper.
\end{rmk}

\section{Hochschild invariants of $A_\infty$ categories}\label{sec:hoch}

\subsection{$\mathsf{dg}$ categories}

\begin{defn}
A \emph{$\BbK$-linear $\mathsf{dg}$ category} $\EuC$ consists of: a set of objects $Ob(\EuC)$; for each pair of objects, a graded $\BbK$-vector space $hom^\bullet(X,Y)$, equipped with a differential $d$ of degree $+1$; composition maps
\begin{align*}
hom^\bullet(X_1,X_2) \otimes hom^\bullet(X_0,X_1) &\to hom^\bullet(X_0,X_2)
\end{align*}
of degree $0$, which we denote by $f \otimes g  \mapsto  f \cdot g$, satisfying
\begin{align}
\label{eqn:assoc}
(f \cdot g) \cdot h &= f \cdot (g \cdot h), \\
\label{eqn:leibniz}
d(f \cdot g) &= df \cdot g + (-1)^{|f|} f \cdot dg,
\end{align}
and there exists a unit $e_X \in hom^0(X,X)$ for all $X$, satisfying $de_X=0$ and
\begin{equation}
\label{eqn:dgunit}
f \cdot e_X = f = e_Y \cdot f \quad \text{for all $f \in hom^\bullet(X,Y)$. }\end{equation}
\end{defn}

\begin{example}
Let $R$ be a graded $\BbK$-algebra. 
There is a $\mathsf{dg}$ category $\modules R$ whose objects are cochain complexes $\{ \ldots \to M_p \overset{d_M}{\to} M_{p+1} \to \ldots \}$ of $R$-modules; it is defined by
\begin{align*}
 hom^p_{\modules R}(M_\bullet,N_\bullet) &:= \bigoplus_j \Hom_R(M_j,N_{j+p}),\\
 df &:= d_N \circ f + (-1)^{|f|'} f \circ d_M,\\
 f \cdot g &:= f \circ g.
 \end{align*}
\end{example}

\begin{defn}
\label{defn:oppdg}
Let $\EuC$ be a $\mathsf{dg}$ category; we define the \emph{opposite $\mathsf{dg}$ category} $\EuC^{op}$ with the same set of objects, by setting
\[
hom^\bullet_{\EuC^{op}}(X,Y)  :=  hom^\bullet_{\EuC}(Y,X); \quad
d_{op}(x) :=  d(x); \quad
f \cdot_{op} g  :=  (-1)^{|f| \cdot |g|} g \cdot f.\]
\end{defn}

\subsection{$A_\infty$ categories}
\label{s:ainf}

We follow the sign conventions of \cite{Getzler1993} and \cite{Seidel2008c}. 
A pre-$A_\infty$ category $\EuC$ consists of a set of objects, and a graded $\BbK$-vector space $hom^\bullet_\EuC(X,Y)$ for each pair of objects $X$, $Y$. 

We define the convenient notation
\[ \EuC(X_0,\ldots,X_s) := hom^\bullet(X_0,X_1)[1] \otimes \ldots \otimes hom^\bullet(X_{s-1},X_s)[1].\]
For a generator $a_1 \otimes \ldots \otimes a_s$ of $\EuC(X_0,\ldots,X_s)$, we define 
\[ \epsilon_j := | a_1 |' + \ldots + |a_j |' \in \Z/2.\]
We define the Hochschild cochains of length $s$:
\[ CC^\bullet(\EuC)^s := \prod_{X_0,\ldots,X_s} \Hom^\bullet(\EuC(X_0,\ldots,X_s),\EuC(X_0,X_s))[-1].\]
We then define the \emph{Hochschild cochain complex}
\[ CC^\bullet(\EuC) := \prod_{s \ge 0} CC^\bullet(\EuC)^s\]
(more precisely, the completion of the direct sum in the category of graded vector spaces, with respect to the filtration by length $s$).
It admits the \emph{Gerstenhaber product}:
\[ \varphi \circ \psi(a_1,\ldots,a_s) := \sum_{j,k} (-1)^{|\psi|' \cdot \epsilon_j} \varphi^*(a_1,\ldots,\psi^*(a_{j+1},\ldots),a_{k+1},\ldots).\]
An $A_\infty$ structure on $\EuC$ is an element $\mu^* \in CC^2(\EuC)$ satisfying $\mu^* \circ \mu^* = 0$ and $\mu^0 = 0$. 

The \emph{cohomology category} is a graded $\BbK$-linear category $H^\bullet(\EuC)$ with the same objects, 
\begin{align*} 
\mathrm{Hom}^\bullet(X,Y) &:= H^\bullet(hom^\bullet(X,Y),\mu^1),\\
 [a_1] \cdot [a_2] &= (-1)^{|a_2|} \mu^2(a_2,a_1).
 \end{align*}
We will assume that our $A_\infty$ categories are \emph{cohomologically unital}, i.e., $\mathrm{Hom}^\bullet(X,Y)$ admits units.

We recall that an \emph{$A_\infty$ functor} $F: \EuC \to \EuD$ consists of a map on the level of objects, together with maps
\[ F^s: \EuC(X_0,\ldots,X_s) \to \EuD(FX_0,FX_s)\]
satisfying
\begin{multline}
\label{eqn:Ainfun}
\sum \mu^*(F^*(a_1,\ldots),F^*(a_{j_1+1},\ldots),\ldots,F^*(a_{j_k+1},\ldots,a_s)) = \\
\sum (-1)^{\epsilon_j} F^*(a_1,\ldots,\mu^*(a_{j+1},\ldots),\ldots,a_s).
\end{multline}

\begin{defn}
\label{defn:ainfdg}
If $\EuC$ is a $\mathsf{dg}$ category, we define an $A_\infty$ category $A_\infty(\EuC)$ with the same set of objects:
\begin{align*}
hom^\bullet_{A_\infty(\EuC)}(X,Y) &:= hom^\bullet_\EuC(Y,X);\\
\mu^1(f)  :=  df; \qquad \mu^2(f,g) & :=  (-1)^{|f|} f \cdot g;\qquad \mu^{\ge 3}  :=  0.
\end{align*}
If $F: \EuC \to \EuD$ is a $\mathsf{dg}$ functor, we define an $A_\infty$ functor $A_\infty(F): A_\infty(\EuC) \to A_\infty(\EuD)$ by setting $A_\infty(F)^1 := F$ and $A_\infty(F)^{\ge 2} := 0$.
\end{defn}

\begin{defn}
\label{defn:oppainf}
If $\EuC$ is an $A_\infty$ category, we define the \emph{opposite} $A_\infty$ category $\EuC^{op}$. 
It has the same objects as $\EuC$, and morphism spaces $hom^\bullet_{\EuC^{op}}(X,Y) := hom^\bullet_\EuC(Y,X)$.
We have an isomorphism $CC^\bullet(\EuC)  \to CC^\bullet(\EuC^{op})$ sending $\eta  \mapsto \eta_{op}$, defined by
\begin{align*}
 \eta^s_{op}(a_1,\ldots,a_s) &:= (-1)^\dagger \eta^s(a_s,\ldots,a_1),\quad\text{where}\\
 \dagger &:= \sum_{1 \le i<j \le s} |a_i|' \cdot |a_j|'.
 \end{align*}
This isomorphism preserves the Gerstenhaber product, in the sense that $\alpha_{op} \circ \beta_{op} = (\alpha \circ \beta)_{op}$. 
Thus we can define the $A_\infty$ structure maps on $\EuC^{op}$ to be equal to $\mu^*_{op}$.
\end{defn}

\begin{rmk}
\label{rmk:oppcohunit}
If $e \in hom^0_\EuC(X,X)$ is a cohomological unit in $\EuC$, then $-e \in hom^0_{\EuC^{op}}(X,X) = hom^0_\EuC(X,X)$ is a cohomological unit in $\EuC^{op}$.
\end{rmk}

\begin{rmk}
This is a different definition of the opposite $A_\infty$ category from that given in \cite[Section 1a]{Seidel2008} (i.e., the two definitions give non-equivalent $A_\infty$ categories in general). 
It was verified in \cite[Appendix B]{Sheridan2017} that there is an isomorphism $\EuF(X,\omega)^{op} \cong \EuF(X,-\omega)$ for the exact Fukaya category $\EuF$. 
We take this as evidence that this definition of the opposite category is most relevant for Fukaya categories. 
We thank Seidel for drawing our attention to this difference.
\end{rmk}

\begin{rmk}\label{rmk:minusop}
Definitions \ref{defn:ainfdg}, \ref{defn:oppdg} and \ref{defn:oppainf} are compatible, but in a slightly non-trivial way: given a $\mathsf{dg}$ category $\EuC$, there is a strict isomorphism of $A_\infty$ categories $A_\infty(\EuC^{op})  \cong  \left(A_\infty(\EuC)\right)^{op}$ which sends $x \mapsto -x$ for all morphisms $x$. 
\end{rmk}

\subsection{Hochschild cohomology}

We define the \emph{Gerstenhaber bracket} on $CC^\bullet(\EuC)$:
\[ [\varphi,\psi] := \varphi \circ \psi - (-1)^{|\varphi|' \cdot |\psi|'} \psi \circ \varphi.\]
It is a graded Lie bracket. 
We define the \emph{Hochschild differential} $M^1: CC^\bullet(\EuC) \to CC^\bullet(\EuC)[1]$ by $M^1 := [\mu^*,-]$. 
Because $[\mu^*,\mu^*] = 0$, $M^1$ is a differential, i.e., $(M^1)^2 = 0$. 
Its cohomology is called the Hochschild cohomology, $\HH^\bullet(\EuC)$.

For $p \ge 2$, we define $M^p \in CC^2(CC^\bullet(\EuC),CC^\bullet(\EuC))^p $ by
\begin{align}
M^p(\varphi_1,\ldots,\varphi_p)(a_1,\ldots,a_s) &:= 
 \sum (-1)^\dagger \mu^*(a_1,\ldots,\varphi_1^*(a_{j_1+1}, \ldots), \ldots,\varphi_p^*(a_{j_p+1},\ldots),\ldots,a_s), \\
\nonumber\text{ where }\quad \dagger &= \sum_{i=1}^p |\varphi_i|' \cdot \epsilon_{j_i}.
 \end{align}
By \cite[Proposition 1.7]{Getzler1993}, the operations $M^* \in CC^2(CC^\bullet(\EuC))$ define an $A_\infty$ structure on $CC^\bullet(\EuC)$. 
In particular, the \emph{Yoneda product} on $\HH^\bullet(\EuC)$, defined on the cochain level by
\begin{equation}
\label{eqn:Yonprod} \varphi \cup \psi := (-1)^{| \psi|} M^2(\psi,\varphi),
 \end{equation}
makes $\HH^\bullet(\EuC)$ into a graded associative algebra. 
Together with the Gerstenhaber bracket, this makes Hochschild cohomology into a \emph{Gerstenhaber algebra}.

\begin{rmk}
We have an isomorphism of $A_\infty$ algebras $CC^\bullet(\EuC) \to CC^\bullet(\EuC^{op})^{op}$, sending $\eta  \mapsto \eta_{op}$.
\end{rmk}

We now define the \emph{Kodaira--Spencer map}, which is closely related to the Kaledin class \cite{Kaledin2007,Lunts2010b}. 
We make a choice of $\BbK$-basis for each morphism space $hom^\bullet_\EuC(X,Y)$. 
We write each $A_\infty$ structure map $\mu^*$ in this basis, as a matrix with entries in $\BbK$. 
We obtain a Hochschild cochain $v(\mu^*) \in CC^2(\EuC)$ by acting with the derivation $v$ on the entries of the matrix for $\mu^*$. 
This Hochschild cochain is closed (as one sees by applying $v$ to the $A_\infty$ equations for $\EuC$), so we may define
\begin{align*}
\mathsf{KS}:\deriv_\Bbbk \BbK &\to \HH^2(\EuC),\\
\mathsf{KS}(v) &:= [v(\mu^*)].
\end{align*}
We will prove in Corollary \ref{cor:ksbasesindep} that the Kodaira--Spencer map is independent of the choice of $\BbK$-bases for the morphisms spaces.

\subsection{Hochschild homology}
\label{subsec:hochhom}

We define the \emph{Hochschild chain complex} 
\[CC_\bullet(\EuC) := \bigoplus_{X_0,\ldots,X_s} \EuC(X_0,\ldots,X_s,X_0)[-1].\]
We denote generators by $a_0[a_1|\ldots|a_s] := a_0 \otimes \ldots \otimes a_s$.
For such a generator, we define the sign
\[ \varepsilon_j := |a_0|' + |a_1|' + \ldots + |a_j|'\]
(the only difference between $\epsilon_j$ and $\varepsilon_j$ is that the former starts at $1$, the latter starts at $0$).

We define an operation $t$ on $CC_\bullet(\EuC)$ by
\[ t(a_0[a_1|\ldots|a_s]) := (-1)^{|a_s|' \cdot \varepsilon_{s-1}} a_s[a_0|\ldots|a_{s-1}].\]

\begin{nota}
\label{not:cycsum}
If $P: CC_\bullet(\EuC) \to M$ is some map, we define
\begin{align*} 
P(a_0[a_1|\ldots|\overbrace{a_{j+1}|\ldots|a_k}|\ldots|a_s]) &:= \sum_{i=j+1}^k P \circ t^i \\
&= \sum_{i=j+1}^k (-1)^\dagger P(a_{s-i+1}[\ldots|a_s|a_0|\ldots|a_{s-i}]),\quad\text{ where}\\
\dagger &:= (\varepsilon_s - \varepsilon_{s-i}) \cdot \varepsilon_{s-i}.
\end{align*}
In words, we add up all ways of cyclically permuting the inputs of $P$, in such a way that $a_0$ lands underneath the brace. We have included Examples \ref{eg:cycsum1} and \ref{eg:cycsum2} to help familiarize the reader with the notation.
\end{nota}

We define the Hochschild differential $b: CC_\bullet(\EuC) \to  CC_\bullet(\EuC)[1]$ by
\begin{multline}
\label{eqn:ccdiffb}
b(a_0[a_1|\ldots|a_s]) :=  \sum_j \mu^*(\overbrace{a_0,\ldots,a_j})[a_{j+1}|\ldots|a_s] 
+ \sum_{j,k} (-1)^{\varepsilon_j}a_0[\ldots|\mu^*(a_{j+1},\ldots,a_k)|\ldots|a_s].
\end{multline}
It is a differential, and its cohomology is called the Hochschild homology, $\HH_\bullet(\EuC)$.

\begin{rmk}
\label{rmk:tautid}
The convention for Hochschild homology of a $\mathsf{dg}$ category $\EuC$ (see, e.g., \cite[Equation (2.1)]{Shklyarov2012}) coincides with ours, i.e., there is a natural identification of cochain complexes
\[(C_\bullet(\EuC),b) = (CC_\bullet(A_\infty(\EuC)),b).\]
\end{rmk}

\begin{defn}
\label{defn:ccopp}
There is an isomorphism of cochain complexes
\begin{align*}
 CC_\bullet(\EuC) & \to CC_\bullet(\EuC^{op})\\
 \alpha &\mapsto \alpha^\vee,\quad\text{sending}\\
 a_0[a_1|\ldots|a_s] &\mapsto (-1)^\dagger a_0[a_s|\ldots|a_1],\quad\text{ where}\\
 \dagger& := \sum_{1 \le i < j \le s} |a_i|' \cdot |a_j|'.
 \end{align*}
\end{defn}

\begin{rmk}
\label{rmk:veesign}
Definition \ref{defn:ccopp} is compatible with the corresponding map for $\mathsf{dg}$ categories, in the sense that for any $\mathsf{dg}$ category $\EuC$, the following diagram commutes up to an overall sign $-1$:
\[ \xymatrix{ C_\bullet(\EuC) \ar[r] \ar[d] & CC_\bullet(A_\infty(\EuC)) \ar[d] \\
C_\bullet(\EuC^{op}) \ar[r] & CC_\bullet(A_\infty(\EuC^{op})).}\]
Here, the horizontal maps are the tautological identifications (see Remark \ref{rmk:tautid}), the left vertical arrow is the map defined, e.g., in \cite[Proposition 4.5]{Shklyarov2012}, and the right vertical arrow is the map defined in Definition \ref{defn:ccopp}, composed with the isomorphism $CC_\bullet(A_\infty(\EuC)^{op}) \cong CC_\bullet(A_\infty(\EuC^{op}))$ induced by the isomorphism of Remark \ref{rmk:minusop}.
\end{rmk}

For $p \ge 1$, we define the operations\footnote{
Getzler denotes $b^{p|1}$ by $b \{-,\ldots,-\}$.}
\[ b^{p|1}: CC^\bullet(\EuC) ^{\otimes p} \otimes CC_\bullet(\EuC) \to CC_\bullet(\EuC)[1-p]\]
\begin{multline}
\label{eqn:bp1}
 b^{p|1}(\varphi_1,\ldots,\varphi_p|a_0,\ldots,a_s) := \\
 \sum (-1)^\dagger \mu^*(a_0,\ldots,\varphi_1^*(a_{j_1+1},\ldots),\ldots,\varphi_p^*(a_{j_p+1},\ldots),\overbrace{\ldots,a_k})[\ldots|a_s], \quad\text{ where}
\end{multline}
\[\dagger = \sum_{i=1}^p |\varphi_i|' \cdot \varepsilon_{j_i}.\]

\begin{example}\label{eg:cycsum1}
We write out an example:
\begin{multline}
b^{1|1}(\varphi|a_0,a_1) = \mu^3(\varphi^0,a_0,a_1) + (-1)^{|a_0|'\cdot |a_1|'} \mu^3(\varphi^0,a_1,a_0) \\
+ (-1)^{(|\varphi|'+|a_0|') \cdot |a_1|'} \mu^3(a_1,\varphi^0,a_0) + (-1)^{|a_0|'\cdot |a_1|'} \mu^2(\varphi^1(a_1),a_0) + \mu^2(\varphi^0,a_0)[a_1].
\end{multline}
\end{example}
By \cite[Theorem 1.9]{Getzler1993}, the operations $b^{p|1}$ (with $b^{0|1} := b$) equip $CC_\bullet(\EuC)$ with the structure of an $A_\infty$ left-module over the $A_\infty$ algebra $CC^\bullet(\EuC)$. 
In particular, $\HH_\bullet(\EuC)$ is a graded left $\HH^\bullet(\EuC)$-module, with the module structure given on the level of cohomology by $ \varphi \cap \alpha := (-1)^{|\varphi|} b^{1|1}(\varphi|\alpha)$.

\begin{lem}
\label{lem:FHH}
If $F: \EuC \to \EuD$ is an $A_\infty$ functor, there is a chain map
\begin{align*} 
F_*: CC_\bullet(\EuC) &\to CC_\bullet(\EuD) \\
F_*(a_0[\ldots|a_s]) &:= \sum F^*(\overbrace{a_0,\ldots})[F^*(\ldots)|\ldots|F^*(\ldots,a_s)].
\end{align*}
\end{lem}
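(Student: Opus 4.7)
The plan is to prove the identity $F_* \circ b = b \circ F_*$ by expanding both sides and matching terms via the $A_\infty$ functor equation \eqref{eqn:Ainfun}, exactly as one does in the classical (strictly associative) case.

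First I would verify that $F_*$ is well-defined: the formula says we distribute copies of $F^*$ across the input string $a_0[a_1|\ldots|a_s]$, with one distinguished copy whose arguments sit cyclically under the brace (so that its output becomes the new $a_0$), and the remaining copies partitioning the remaining arguments in order into blocks, each block becoming one $[F^*(\ldots)]$ slot. With the sign conventions inherited from the brace notation introduced just before \eqref{eqn:ccdiffb}, this prescription is unambiguous.

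Next I would expand $F_* \circ b$. Using \eqref{eqn:ccdiffb} and then the formula for $F_*$, one obtains two families of terms, according to whether the $\mu^*$ in $b$ is of ``external'' type (lying under the brace, merging $a_0$ with some cyclic neighbors) or ``internal'' type (sitting in one of the tensor slots). In each case, after applying $F_*$ the output becomes a Hochschild chain of the form
\[
\sum (\pm) F^*(\ldots,\mu^*(\ldots),\ldots)[\,F^*(\ldots)\,|\,\cdots\,|\,F^*(\ldots)\,]
\]
with $\mu^*$ lying inside exactly one application of $F^*$ (either under the brace or in one of the slots). This is precisely the right-hand side of the $A_\infty$ functor equation \eqref{eqn:Ainfun}, summed over all ways of inserting the ``$F^*$-partition'' around the $\mu^*$.

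Now I would expand $b \circ F_*$. Applying $F_*$ first produces Hochschild chains where the inputs have been chopped into blocks by $F^*$'s, and then applying $b$ inserts a $\mu^*$ somewhere. This $\mu^*$ either lies under the brace (combining the under-brace $F^*$ with zero or more adjacent slot-$F^*$'s cyclically) or lies inside a collection of consecutive slot-$F^*$'s. In both situations the result has the form
\[
\sum (\pm)\,\mu^*\bigl(F^*(\ldots),\ldots,F^*(\ldots)\bigr)\bigl[\cdots\bigr]
\]
appearing either under the brace (as the new $a_0$-entry) or in a single slot. This is exactly the left-hand side of \eqref{eqn:Ainfun}. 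Applying the $A_\infty$ functor equation converts each such block into the corresponding expression on the right-hand side, yielding exactly the terms produced by $F_* \circ b$.

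The main obstacle, as usual in this subject, is sign-tracking. There are several sources of signs conspiring: the Koszul sign $\epsilon_j$ in \eqref{eqn:Ainfun}, the extended cyclic sign $\varepsilon_j$ in \eqref{eqn:ccdiffb}, the cyclic permutation sign hidden in the brace notation, and the reordering needed to pass $F^*$'s through one another. The cleanest way to verify that all of these combine consistently is through the graphical formalism of Appendix \ref{app:signs}; both $F_* \circ b$ and $b \circ F_*$ correspond to the same collection of decorated tree/disk diagrams (a cyclically-ordered necklace carrying one $\mu^*$-vertex, several $F^*$-vertices, and one marked output edge), and the $A_\infty$ functor relation is precisely the local move identifying the two ways of reading a $(\mu^*, F^*)$-chain along the necklace. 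Granting this diagrammatic cancellation — or checking it once by hand in low length — completes the proof.
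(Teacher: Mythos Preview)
Your proposal is correct and is exactly the argument the paper has in mind: the paper states Lemma~\ref{lem:FHH} without proof, treating it as one of the identities that ``become trivial using the graphical notation'' of Appendix~\ref{app:signs} (see the Remark at the end of the introduction and Figure~\ref{fig:FHH}). Your expansion of $F_* \circ b$ and $b \circ F_*$ and their matching via the $A_\infty$ functor relation~\eqref{eqn:Ainfun}, with the signs handled diagrammatically, is precisely that routine check.
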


\subsection{Cyclic homology}

If $\EuC$ is an $A_\infty$ category, we define a new $A_\infty$ category $\EuC^+$, by 
\[ hom^\bullet_{\EuC^+}(X,Y) := \left\{ \begin{array}{rl}
						hom^\bullet_{\EuC}(X,Y) & \mbox{ if $X \neq Y$} \\
						hom^\bullet_{\EuC}(X,X) \oplus \BbK \cdot e^+ & \mbox{ if $X=Y$.}
						\end{array} \right. \]
We define $\mu^s(\ldots,e^+,\ldots) = 0$ for all $s \neq 2$, and $\mu^2(e^+,a) = a = (-1)^{|a|}\mu^2(a,e^+) $, 
leaving all other structure maps $\mu^*$ unchanged.
Then $\EuC^+$ is a strictly unital $A_\infty$ category, with strict units $e^+$. 

\begin{rmk}
\label{rmk:oppunit}
There is a strict isomorphism $(\EuC^{op})^+ \cong (\EuC^+)^{op}$, which sends $a \mapsto a$ for all $a \in \hom^\bullet_\EuC$, but sends $e^+ \mapsto -e^+$.
\end{rmk}

If $F:\EuC \to \EuD$ is an $A_\infty$ functor, then we can extend $F$ to an $A_\infty$ functor $F^+: \EuC^+ \to \EuD^+$ by setting $F^1(e^+) = e^+$ and $F^s(\ldots,e^+,\ldots) = 0$ for all $s \ge 2$. 
We define the subcomplex $D_\bullet \subset CC_\bullet(\EuC^+)$ of \emph{degenerate elements}, generated by $a_0[\ldots|a_s]$ such that $a_i = e^+$ for some $i>0$, together with the length-zero chains $e^+$. 
We define the \emph{non-unital Hochschild chain complex}, $CC_\bullet^{nu}(\EuC) := CC_\bullet(\EuC^+)/D_\bullet$. 
When $\EuC$ is cohomologically unital, the composition of the natural maps
\[ CC_\bullet(\EuC) \hookrightarrow CC_\bullet(\EuC^+) \to CC_\bullet^{nu}(\EuC)\]
is a quasi-isomorphism (compare \cite[Section 1.4]{Loday1998}).

Now define Connes' differential $B: CC_\bullet^{nu}(\EuC) \to CC_\bullet^{nu}(\EuC)$ by
\begin{equation}
\label{eqn:ConnesB}
 B(a_0[\ldots|a_s]) := e^+[\overbrace{a_0|\ldots|a_s}].
\end{equation}
It has degree $-1$, and satisfies $B^2 = 0$ and $bB+Bb = 0$.
Therefore, for any graded $\BbK[u]$-module $W$, where $u$ has degree $+2$, we obtain a graded cochain complex $(CC_\bullet^{nu}(\EuC) \otimes W,b+uB)$. 

\begin{rmk}
\label{rmk:dgainfcyc}
The tautological identification of Hochschild complexes for a $\mathsf{dg}$ category, and for its $A_\infty$ version (Remark \ref{rmk:tautid}) equates the $\mathsf{dg}$ version of Connes' differential (with conventions as in \cite[Section 2.2]{Shklyarov2013}) with the $A_\infty$ version \eqref{eqn:ConnesB}.
\end{rmk}

\begin{defn}
We recall the automorphism of $\BbK[u]$ which sends $f \mapsto f^\star$, where $f^\star(u) := f(-u)$.
If $W_1$ and $W_2$ are $\BbK[u]$-modules, we call a map $g: W_1 \to W_2$ \emph{sesquilinear} if
\[ g(f \cdot w) = f^\star \cdot g(w).\]
\end{defn}

\begin{rmk}
\label{rmk:Getzsigns}
Given a sesquilinear automorphism of $W$, also denoted $w \mapsto w^\star$, we obtain an isomorphism of cochain complexes
\[ (CC_\bullet^{nu}(\EuC) \otimes W,b+uB) \cong (CC_\bullet^{nu}(\EuC) \otimes W,b - uB)\]
by sending $\alpha \otimes w \mapsto \alpha \otimes w^\star$. 
Thus, although Getzler uses the convention that the cyclic differential is $b-uB$ in \cite{Getzler1993}, every formula he writes can be translated into our conventions by setting $u \mapsto -u$. 
\end{rmk}

\begin{rmk}
The isomorphism of Definition \ref{defn:ccopp} extends to an isomorphism $CC_\bullet^{nu}(\EuC) \cong CC_\bullet^{nu}(\EuC^{op})$ which intertwines $B$ with $-B$. 
This is a consequence of Remark \ref{rmk:oppunit}: insertion of $e^+$ in $CC_\bullet^{nu}(\EuC)$ corresponds to insertion of $-e^+$ in $CC_\bullet(\EuC^{op})$. 
\end{rmk}

\begin{rmk}
\label{rmk:sesquiop}
As a consequence of the previous two remarks, for any $\BbK[u]$-module $W$ equipped with a sesquilinear automorphism, we obtain a sesquilinear isomorphism of cochain complexes
\begin{eqnarray}
(CC_\bullet^{nu}(\EuC) \otimes W, b+uB) & \to & (CC_\bullet^{nu}(\EuC^{op}) \otimes W,b+uB)\\
\nonumber\alpha \otimes w & \mapsto & \alpha^\vee \otimes w^\star.
\end{eqnarray}
\end{rmk}

If a graded $\BbK[u]$-module $W$ admits, furthermore, an exhaustive decreasing filtration $\ldots \supset F^{\ge p}W \supset F^{\ge p+1} W \supset \ldots$, such that multiplication by $u$ increases the filtration: $u \cdot F^{\ge p} \subset F^{\ge p+1}$, then the cochain complex $(CC_\bullet^{nu}(\EuC) \otimes W,b+uB)$ admits an exhaustive decreasing filtration $CC_\bullet^{nu}(\EuC) \otimes F^{\ge p}W$; so we can take the completion of this filtration in the category of graded cochain complexes, to obtain a new filtered cochain complex $(CC_\bullet^{nu}(\EuC) \widehat{\otimes} W,b+uB)$. 
The cohomology of this cochain complex will also acquire a filtration, which we call the \emph{Hodge filtration} and denote by $F^{\ge p}$. 
The corresponding spectral sequence has $E_1$ page
\begin{equation}
\label{eqn:HdR1}
 E_1^{pq} \cong \bigoplus_r \HH_{p+q-r}(\EuC) \otimes \mathsf{Gr}^pW_r.
\end{equation}

\begin{lem}\label{lem:sscomp}
If $G: CC_\bullet^{nu}(\EuC) \to CC_\bullet^{nu}(\EuD)$ is a map such that $G \circ b = b \circ G$ and $ G \circ B = B \circ G$, then we obtain a map of filtered cochain complexes:
\[ G \widehat{\otimes} W: CC_\bullet^{nu}(\EuC) \widehat{\otimes} W \to CC_\bullet^{nu}(\EuD) \widehat{\otimes} W.\]
If $G$ is a quasi-isomorphism, then $G \widehat{\otimes} W$ is a quasi-isomorphism.
\end{lem}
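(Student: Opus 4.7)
The first assertion is essentially formal. Since $G$ commutes strictly with $b$ and $B$ and acts only on the left tensor factor, the map $G \otimes \mathrm{id}_W$ commutes with $b + uB$ on the uncompleted tensor product and sends $CC_\bullet^{nu}(\EuC) \otimes F^{\ge p}W$ into $CC_\bullet^{nu}(\EuD) \otimes F^{\ge p}W$ for every $p$. Completion being functorial on filtered cochain complexes, we obtain the induced map $G \widehat{\otimes} W$ as a morphism of filtered cochain complexes.

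For the quasi-isomorphism claim, the plan is to reduce to associated gradeds of the Hodge filtration. Because $u$ raises the filtration by one step while $b$ preserves it, the differential $b+uB$ reduces to $b \otimes \mathrm{id}$ on each associated graded piece, yielding
\[ \mathsf{Gr}^p\bigl(CC_\bullet^{nu}(\EuC)\widehat{\otimes} W,\; b+uB\bigr) \;\cong\; \bigl(CC_\bullet^{nu}(\EuC)\otimes \mathsf{Gr}^p W,\; b \otimes \mathrm{id}\bigr). \]
Since $\mathsf{Gr}^p W$ is merely a graded $\BbK$-vector space, the cohomology of this piece is $\HH_\bullet^{nu}(\EuC) \otimes \mathsf{Gr}^p W$, and the map induced by $G \widehat{\otimes} W$ is $G_* \otimes \mathrm{id}$, an isomorphism by hypothesis. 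In the language of \eqref{eqn:HdR1}, this says precisely that $G \widehat{\otimes} W$ induces an isomorphism on the $E_1$ pages of the Hodge-to-de-Rham spectral sequence.

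The main obstacle is passing from this $E_1$ isomorphism to a quasi-isomorphism on the completed total complex. Here I would invoke the standard Eilenberg-Moore comparison argument for complete filtered cochain complexes, exploiting the fact that the filtration on $CC_\bullet^{nu}(\EuC) \widehat{\otimes} W$ is complete and Hausdorff by construction. Concretely, an induction on $q$ using the five-lemma applied to the short exact sequences
\[ 0 \to \mathsf{Gr}^{p+q} \to F^{\ge p}/F^{\ge p+q+1} \to F^{\ge p}/F^{\ge p+q} \to 0 \]
shows that $G \widehat{\otimes} W$ is a quasi-isomorphism on every finite-length quotient $F^{\ge p}/F^{\ge p+q}$; since the completed complex is by construction the inverse limit over $q$ of these quotients, the Milnor $\lim^1$ exact sequence then upgrades the isomorphism to the completed complex, provided the cohomology tower satisfies the Mittag-Leffler condition---the essential technical verification that needs to be addressed in the proof proper.
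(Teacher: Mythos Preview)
Your approach is essentially the same as the paper's: reduce to the $E_1$ page via the Hodge filtration, observe that $G$ induces an isomorphism there, and then appeal to completeness and exhaustiveness of the filtration. The paper simply invokes the Eilenberg--Moore comparison theorem \cite[Theorem 5.5.11]{Weibel1994} as a black box at this last step.

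One remark on your final paragraph: the Mittag--Leffler caveat is not needed, and flagging it as ``the essential technical verification'' is misleading. Once you know that $G\widehat{\otimes}W$ induces an isomorphism on the cohomology of every finite quotient $F^{\ge p}/F^{\ge p+q}$, the induced maps of inverse systems are isomorphisms termwise, hence both $\varprojlim$ and $\varprojlim{}^{1}$ of the cohomology towers on the two sides are isomorphic (regardless of whether either tower is Mittag--Leffler). The Milnor sequence then gives the isomorphism on $H^*$ of the completed complexes directly. This is exactly what the Eilenberg--Moore comparison theorem packages; no separate Mittag--Leffler check is required.
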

\begin{proof}
The existence of $G \widehat{\otimes} W$ is clear. 
Because $G \widehat{\otimes} W$ respects filtrations, it induces a map between the corresponding spectral sequences \eqref{eqn:HdR}; because $G$ is a quasi-isomorphism, the map is an isomorphism on the $E_1$ page. 
Therefore, because the filtrations are exhaustive and complete, $G \widehat{\otimes} W$ is a quasi-isomorphism by the Eilenberg--Moore comparison theorem \cite[Theorem 5.5.11]{Weibel1994}. 
\end{proof}

\begin{defn}
The following examples are of particular interest:
\begin{itemize}
\item $W^-:= \BbK[u]$, with filtration $F^{\ge p}W^- := u^p\BbK[u]$. We denote
\[ CC^{-}_\bullet(\EuC) := CC_\bullet^{nu}(\EuC) \widehat{\otimes} W^-,\]
and its cohomology by $\HC^-_\bullet(\EuC)$. This is called the \emph{negative cyclic homology}.
\item $W^\infty := \BbK[u,u^{-1}]$, with the same filtration. We denote
\[ CC^\infty_\bullet(\EuC) := CC_\bullet^{nu}(\EuC) \widehat{\otimes} W^\infty,\]
and its cohomology by $\HP_\bullet(\EuC)$. This is called the \emph{periodic cyclic homology}.
\item $W^+ := \BbK[u,u^{-1}]/ \BbK[u]$, with the same filtration. We denote
\[ CC^{+}_\bullet(\EuC) := CC_\bullet^{nu}(\EuC) \widehat{\otimes} W^+,\]
and its cohomology by $\HC^+_\bullet(\EuC)$. This is called the \emph{positive cyclic homology}.
\end{itemize}
\end{defn}

\begin{rmk}
For these examples, the spectral sequence \eqref{eqn:HdR1} has $E_1$ page
\begin{equation}
\label{eqn:HdR}
 E_1^{pq} \cong \left\{ \begin{array}{rl}
				\HH_{q-p}(\EuC) \cdot u^p & \mbox{ if $u^p \in W$} \\
				0 & \mbox{ otherwise.}
				\end{array} \right.
\end{equation}
\end{rmk}

\begin{rmk}
\label{rmk:cycopp}
As a consequence of Remark \ref{rmk:sesquiop}, there is a sesquilinear isomorphism 
\begin{align*}
\HC^-_\bullet(\EuC) & \to  \HC^-_\bullet(\EuC^{op})\\
\alpha & \mapsto  \alpha^\vee, 
\end{align*}
defined by $(\alpha \otimes w)^\vee := \alpha^\vee \otimes w^\star$, and similarly for $\HP_\bullet$ and $\HC^+_\bullet$.
\end{rmk}

\begin{lem}
\label{lem:FHC}
If $F: \EuC \to \EuD$ is an $A_\infty$ functor, then the map $F^+_*: CC_\bullet^{nu}(\EuC) \to CC_\bullet^{nu}(\EuD)$ satisfies 
\[ F_*^+ \circ b = b \circ F_*^+ \quad\text{and}\quad F_*^+ \circ B = B \circ F_*^+.\]
In particular, it induces a map $F_*: \HC^-_\bullet(\EuC) \to \HC^-_\bullet(\EuD)$, and similarly for $\HP_\bullet$ and $\HC_\bullet^+$.
\end{lem}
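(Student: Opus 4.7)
The plan is to lift $F_*$ to the strictly unital extension $F^+: \EuC^+ \to \EuD^+$, check it descends to the non-unital complex $CC_\bullet^{nu}$, verify that it commutes with Connes' operator $B$, and finally invoke the preceding lemma relating maps commuting with $b$ and $B$ to maps of filtered cochain complexes.

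First I would observe that $F^+$ is a strictly unital $A_\infty$ functor, since by construction $F^1(e^+) = e^+$ and $F^s(\ldots,e^+,\ldots) = 0$ for $s \ge 2$. Applying Lemma \ref{lem:FHH} to $F^+$ gives a chain map $F^+_*: CC_\bullet(\EuC^+) \to CC_\bullet(\EuD^+)$ with $F^+_* \circ b = b \circ F^+_*$. To see that this descends to $CC_\bullet^{nu}$, I would verify that it preserves the degenerate subcomplex $D_\bullet$: if $\alpha = a_0[a_1|\ldots|a_s]$ has $a_i = e^+$ for some $i > 0$, then in the defining sum $F^+_*(\alpha) = \sum \pm F^*(\mathrm{blk}_0)[F^*(\mathrm{blk}_1)|\ldots|F^*(\mathrm{blk}_m)]$ any block of length $\ge 2$ containing $e^+$ vanishes by strict unitality; in the surviving summands $\{e^+\}$ is a singleton block, and since block $0$ contains $a_0$ but not $a_i$ (as $i>0$), this singleton is some block $j > 0$, so $F^1(e^+) = e^+$ appears at position $j > 0$ of the output, which is therefore degenerate. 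The length-zero chain $e^+$ maps to itself.

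The central step is to verify $F^+_* \circ B = B \circ F^+_*$ on $CC_\bullet^{nu}$. I would do this by direct term matching. Both sides expand as sums of chains indexed by a partition of the cyclic sequence $(e^+, a_0, \ldots, a_s)$ into consecutive arcs in which $\{e^+\}$ is a singleton arc, together with a choice of distinguished first arc (other than $\{e^+\}$). On the $B \circ F^+_*$ side, a cutting of $(a_0, \ldots, a_s)$ with $a_0$ in block $0$ is equivalent to a partition of the cyclic sequence with a cut between $a_s$ and $a_0$, i.e.\ with $\{e^+\}$ a singleton arc; the subsequent $B$ supplies the cyclic rotation of blocks, that is, the choice of first arc $\ne \{e^+\}$. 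On the $F^+_* \circ B$ side, the rotation $t^i$ for $i \in \{1,\ldots,s+1\}$ selects which element becomes first (never $e^+$); then $F^+_*$ cuts, and the surviving summands are exactly those with $\{e^+\}$ singleton. These two parametrisations biject. Matching signs is a routine verification using the cyclic conventions, most efficiently done with the graphical notation of Appendix \ref{app:signs}; this is the step I expect to be the main obstacle.

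Finally, with $F^+_*$ commuting with both $b$ and $B$ on $CC_\bullet^{nu}$, the preceding lemma produces the induced map $F_*: \HC_\bullet^-(\EuC) \to \HC_\bullet^-(\EuD)$ by taking $W = W^-$, and analogously for $\HP_\bullet$ and $\HC_\bullet^+$ via $W^\infty$ and $W^+$.
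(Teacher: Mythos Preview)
Your proposal is correct and matches the paper's approach. The paper does not actually write out a proof of this lemma (it is one of the identities that the author says ``become trivial using the graphical notation'' of Appendix~\ref{app:signs}), so what you have written is a careful elaboration of precisely the argument the paper leaves implicit: pass to the strictly unital extension $F^+$, use Lemma~\ref{lem:FHH} for compatibility with $b$, check that $D_\bullet$ is preserved, verify $F^+_* \circ B = B \circ F^+_*$ by matching cyclic partitions, and conclude via the lemma on maps commuting with $b$ and $B$.
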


As a consequence of Lemma \ref{lem:sscomp}, we have:

\begin{cor}
\label{cor:cycmorita}
If an $A_\infty$ functor $F: \EuC \to \EuD$ induces an isomorphism $F_*: \HH_\bullet(\EuC) \to \HH_\bullet(\EuD)$, then it also induces an isomorphism $F_*: \HC^-_\bullet(\EuC) \to \HC_\bullet^-(\EuD)$, and similarly for $\HP_\bullet$ and $\HC_\bullet^+$. 
\end{cor}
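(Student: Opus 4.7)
The plan is to reduce everything to the (preceding) lemma asserting that a chain map $G: CC^{nu}_\bullet(\EuC) \to CC^{nu}_\bullet(\EuD)$ which commutes with $b$ and $B$ and is a quasi-isomorphism induces a quasi-isomorphism $G \widehat{\otimes} W$, applied to the cases $W = W^-$, $W^\infty$, $W^+$.

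First, I would take $G := F_*^+: CC_\bullet^{nu}(\EuC) \to CC_\bullet^{nu}(\EuD)$, which is well-defined and commutes with $b$ and $B$ by Lemma \ref{lem:FHC}. The only thing to verify is that $F_*^+$ is itself a quasi-isomorphism on non-unital Hochschild chains. For this, consider the commutative square
\[
\xymatrix{
CC_\bullet(\EuC) \ar[r]^-{F_*} \ar[d] & CC_\bullet(\EuD) \ar[d] \\
CC_\bullet^{nu}(\EuC) \ar[r]^-{F_*^+} & CC_\bullet^{nu}(\EuD)
}
\]
(noted in the remark following the definition of $CC_\bullet^{nu}$). Since $\EuC$ and $\EuD$ are cohomologically unital, the vertical arrows are quasi-isomorphisms (compare \cite[Section 1.4]{Loday1998}). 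By hypothesis, the top horizontal arrow is a quasi-isomorphism. Therefore the bottom horizontal arrow $F_*^+$ is also a quasi-isomorphism.

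Now applying the lemma on $G \widehat{\otimes} W$ to $G = F_*^+$, successively with $W = W^-$, $W = W^\infty$, and $W = W^+$, we obtain quasi-isomorphisms
\[ F_*^+ \widehat{\otimes} W^-: CC^-_\bullet(\EuC) \to CC^-_\bullet(\EuD),\]
and analogously for $CC_\bullet^\infty$ and $CC_\bullet^+$. Passing to cohomology yields the desired isomorphisms on $\HC^-_\bullet$, $\HP_\bullet$, and $\HC^+_\bullet$.

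There is no genuine obstacle here: every nontrivial ingredient is already in place (the commutation of $F_*^+$ with $b$ and $B$, the quasi-isomorphism $CC_\bullet \to CC_\bullet^{nu}$ in the cohomologically unital case, and the Eilenberg-Moore-type comparison that was used to prove the lemma on $G \widehat{\otimes} W$). The mild subtlety worth double-checking is that one really must pass through $CC_\bullet^{nu}$ rather than $CC_\bullet$: the operator $B$ is only defined on the former, so it is essential that the unitalization $\EuC \rightsquigarrow \EuC^+$ and the quotient by degenerate elements are compatible with $F$, which is exactly the content of the commutative square above.
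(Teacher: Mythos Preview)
Your proposal is correct and follows exactly the approach the paper intends: the corollary is stated without proof because it is immediate from the preceding lemma on $G \widehat{\otimes} W$ applied to $G = F_*^+$, together with Lemma \ref{lem:FHC}. You have even spelled out the one point the paper leaves implicit, namely that $F_*^+$ is a quasi-isomorphism on $CC_\bullet^{nu}$ via the commutative square comparing $CC_\bullet$ and $CC_\bullet^{nu}$.
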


\subsection{The Getzler--Gauss--Manin connection}
\label{subsec:GGM}

Getzler \cite{Getzler1993} defines operations\footnote{Getzler denotes $B^{p|1}$ by $B\{-,\ldots,-\}$.}
\[ B^{p|1}:CC^\bullet(\EuC)^{\otimes p} \otimes CC_\bullet^{nu}(\EuC) \to CC_\bullet^{nu}(\EuC)\]
\begin{align*}
 B^{p|1}(\varphi_1,\ldots,\varphi_p|a_0,\ldots,a_s) &:= \sum (-1)^\dagger e^+[a_0|\ldots|\varphi_1^*(a_{j_1+1},\ldots)|\ldots|\varphi_p^*(a_{j_p+1},\ldots)|\overbrace{\ldots|a_s} ],\quad\text{ where}\\
\dagger &:= \sum_{i=1}^p  |\varphi_i|' \cdot \varepsilon_{j_i}.
\end{align*}

\begin{example}\label{eg:cycsum2}
Note that $B^{0|1} = B$. We also write out another example:
\begin{multline} B^{1|1}(\varphi|a_0,a_1) = e^+[\varphi^0|a_0|a_1] +(-1)^{|a_0|' \cdot|a_1|'} e^+[\varphi^0|a_1|a_0] \\+(-1)^{(|a_0|'+|\varphi|')\cdot|a_1|'}e^+[a_1|\varphi^0|a_0] + (-1)^{|a_0|' \cdot|a_1|'} e^+[\varphi^1(a_1)|a_0] .\end{multline}
\end{example}

\begin{defn}
\label{defn:conn}
The Getzler--Gauss--Manin connection \cite[Proposition 3.1]{Getzler1993} is defined by 
\begin{eqnarray*}
\nabla: \deriv_\Bbbk \BbK \otimes_{\BbK} CC_\bullet^{-}(\EuC) & \to & u^{-1} CC_\bullet^{-}(\EuC),\\
\nabla_v(\alpha) &:=& v(\alpha) - u^{-1} b^{1|1}(v(\mu^*)|\alpha) - B^{1|1}(v(\mu^*)|\alpha).
\end{eqnarray*} 
Observe that the second term on the right-hand side has acquired a minus sign in our conventions, in accordance with Remark \ref{rmk:Getzsigns}.
\end{defn}

\begin{rmk}
In writing the expressions `$v(\alpha)$' and `$v(\mu^*)$', it is implicit that we have chosen a $\BbK$-basis for each morphism space $hom^\bullet_\EuC(X,Y)$. 
So really we should write `$\nabla^{\mathcal{B}}$', where $\mathcal{B}$ denotes the choice of these bases; however we will prove (Corollary \ref{cor:GMbasis}) that $\nabla^\mathcal{B}$ is independent of the choice of $\mathcal{B}$ on the level of cohomology, so $\mathcal{B}$ can be removed from the notation.
\end{rmk}

\begin{rmk}
Observe that $\nabla^\mathcal{B}_v$ induces a linear map $\mathsf{Gr}_F^p CC_\bullet^-(\EuC) \to \mathsf{Gr}_F^{p-1} CC_\bullet^-(\EuC)$. 
This map is given by $-u^{-1}b^{1|1}(\mathsf{KS}(v)|-)$ on the level of cohomology, in analogy with the associated graded of the Gauss--Manin connection with respect to the Hodge filtration (see, e.g., \cite[Theorem 10.4]{Voisin2002a}).
\end{rmk}

Getzler shows that $[\nabla_v^\mathcal{B},b+uB] = 0$, so $\nabla_v^\mathcal{B}$ gives a well-defined map on the level of cohomology. 
It is clear from the formula that it is a connection. 
Getzler also shows that $\nabla^\mathcal{B}$ is flat: more precisely, he writes down an explicit contracting homotopy for $u^2\left([\nabla_X,\nabla_Y] - \nabla_{[X,Y]}\right)$ 
(see \cite[Theorem 3.3]{Getzler1993}), so the connection is flat in the sense of Definition \ref{defn:prevhsnopair}. 

\begin{thm}
\label{thm:GMMorita}
Suppose that $\EuC$ and $\EuD$ are $A_\infty$ categories, equipped with a choice of $\BbK$-bases $\mathcal{B}_\EuC$ for the morphism spaces of $\EuC$, and $\mathcal{B}_\EuD$ for the morphism spaces of $\EuD$.
If $F: \EuC \to \EuD$ is an $A_\infty$ functor, then the induced map $F_*: \HC^-_\bullet(\EuC) \to \HC^-_\bullet(\EuD)$ respects the Getzler--Gauss--Manin connection, in the sense that
\[ F_* \circ \left[\nabla^{\mathcal{B}_\EuC}_v\right] = \left[\nabla^{\mathcal{B}_\EuD}_v\right] \circ F_*\]
on the level of cohomology.
\end{thm}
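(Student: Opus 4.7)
The plan is to exhibit an explicit chain-level null-homotopy of the operator
\[ \Delta(\alpha) := \nabla^{\mathcal{B}_\EuD}_v \bigl(F_*^+(\alpha)\bigr) - F_*^+\bigl(\nabla^{\mathcal{B}_\EuC}_v(\alpha)\bigr) \]
acting on $CC_\bullet^{nu}(\EuC) \widehat{\otimes} W^-$, with the homotopy made from a ``variation of $F$'' pushforward constructed in analogy with Getzler's $b^{1|1}$ and $B^{1|1}$.

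\textbf{Step 1: The key algebraic identity.} With $\mu^*_\EuC$, $\mu^*_\EuD$, and $F^*$ all written as matrices of elements of $\BbK$ in the chosen bases $\mathcal{B}_\EuC$ and $\mathcal{B}_\EuD$, I would apply the derivation $v$ to the $A_\infty$ functor equation \eqref{eqn:Ainfun}. The resulting identity, combined with the original $A_\infty$ functor equation, says that $v(F^*)$ is an ``$A_\infty$ pre-morphism twisted by the Kodaira--Spencer cochains'', i.e.~the appropriate bar-differential of $v(F^*)$ on $CC^\bullet(\EuC,\EuD)$ equals $F^* \circ v(\mu^*_\EuC) - v(\mu^*_\EuD) \circ F^*$ (summed over insertions, with the signs dictated by \eqref{eqn:Ainfun}). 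This is the single algebraic input that powers the rest of the argument.

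\textbf{Step 2: Rewriting $\Delta$.} Because $v$ acts on the matrix entries of both $\mu^*$ and $F^*$, the Leibniz rule gives $v(F_*^+(\alpha)) = F_*^+(v(\alpha)) + (vF)_*(\alpha)$, where $(vF)_*$ is defined by the same formula as $F_*^+$ but with exactly one factor of $F^*$ replaced by $v(F^*)$, summed over all positions. Substituting this into Definition \ref{defn:conn} and using Lemmas \ref{lem:FHH} and \ref{lem:FHC}, $\Delta$ splits on the nose as
\[ \Delta(\alpha) = (vF)_*(\alpha) - u^{-1}\bigl[b^{1|1}(v(\mu^*_\EuD)|F_*^+\alpha) - F_*^+ b^{1|1}(v(\mu^*_\EuC)|\alpha)\bigr] - \bigl[B^{1|1}(v(\mu^*_\EuD)|F_*^+\alpha) - F_*^+ B^{1|1}(v(\mu^*_\EuC)|\alpha)\bigr]. \]

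\textbf{Step 3: The homotopy and its verification.} I would define
\[ T(\alpha) := -u^{-1}(vF)_*^{b}(\alpha) - (vF)_*^{B}(\alpha), \]
where $(vF)_*^b$ and $(vF)_*^B$ are two-indexed variants of $F_*^+$: both insert a $v(F^*)$ somewhere among the $F^*$'s, but the former closes up with an outer $F^*$ (in the style of $b^{1|1}$), whereas the latter closes up with an outer $e^+$ and a cyclic rotation (in the style of $B^{1|1}$). The claim is that $[b+uB,T] = \Delta$ on the cochain level. Verifying this reduces to a direct expansion: the $b$-commutator of $(vF)_*^b$ produces the $u^{-1}$-bracket in Step 2 together with a copy of $(vF)_*(\alpha)$ (using the differentiated $A_\infty$ functor equation from Step 1 to absorb the terms where $\mu_\EuC$ or $\mu_\EuD$ meets $v(F^*)$); the $uB$-commutator of $(vF)_*^B$ similarly produces the $B$-bracket in Step 2, with the cross-term cancellations again controlled by Step 1.

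\textbf{Main obstacle.} The real difficulty is not conceptual but combinatorial: identifying the correct placement of $v(F^*)$ and of the outer $F^*$ or $e^+$ in $(vF)_*^b$ and $(vF)_*^B$, and then matching the Koszul signs produced by $b$ and $uB$ against those in $\Delta$. This is the sort of bookkeeping that the graphical calculus of Appendix \ref{app:signs} is designed to automate; with that notation, the cancellations in $[b+uB,T] - \Delta$ become graphically manifest, and each surviving term is paired by the differentiated $A_\infty$ functor equation.
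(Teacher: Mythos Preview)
Your strategy is the paper's strategy: differentiate the $A_\infty$ functor equation, decompose $\Delta$ as in your Step~2, and build an explicit homotopy $H^u$ with $[b+uB,H^u]=\Delta$ (after clearing the $u^{-1}$). Steps~1 and~2 are correct and match the paper precisely.

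The gap is in Step~3. Your homotopy $T$ is built entirely from operations that insert a single $v(F^*)$ among the $F^*$'s. The paper's homotopy (Appendix~\ref{app:GM}) has \emph{three} pieces, $H^u = H_2 - H_1 + u\,H_3$, and $H_1$ is not of this form: it inserts $v(\mu^*_\EuC)$ \emph{inside} the leading $F^*$ (with the cyclic wrap-around landing after it), rather than inserting $v(F^*)$. If you compute $[b,(vF)_*^b]$ for your $(vF)_*^b$---which, from your description, is essentially the paper's $H_2$---you will find that the differentiated functor equation converts the terms where $\mu^*_\EuD$ meets $v(F^*)$ into $v(\mu^*_\EuD)$-terms and $v(\mu^*_\EuC)$-terms, but residual terms of the shape ``$F^*$ swallows a $v(\mu^*_\EuC)$ with the wrap-around inside'' remain. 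These do not cancel against anything in your $T$; they are exactly $[b,H_1]$. Concretely, the paper's Lemma~\ref{lem:defmod} shows that the order-$u^{-1}$ identity requires \emph{both} $H_1$ and $H_2$: one checks $[b,H_2-H_1]$ equals the $b^{1|1}$-bracket, whereas $[b,H_2]$ alone leaves uncanceled $A$-type terms (in the paper's notation $A1,A2,A3,A4$). Your $(vF)_*^B$ does correspond to $H_3$, and the order-$u^0$ verification then also needs $[B,H_1]$ (which equals the paper's $S1$) to match the $B^{1|1}$-bracket.

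So the fix is small but essential: add the piece $H_1(\alpha)=\sum (-1)^{\varepsilon_j} F^*\bigl(a_0,\ldots,v(\mu^*_\EuC)(a_{j+1},\ldots),\overbrace{\ldots}\bigr)[F^*(\ldots)|\ldots]$ to your homotopy. Once you do, your outline becomes the paper's proof.
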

\begin{proof}
See Appendix \ref{app:GM}.
\end{proof}

\begin{cor}
\label{cor:GMbasis}
The Getzler--Gauss--Manin connection $\nabla^\mathcal{B}$ is independent of the choice of bases $\mathcal{B}$, on the level of cohomology.
\end{cor}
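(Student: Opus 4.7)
The plan is to derive this corollary as an immediate consequence of Theorem \ref{thm:GMMorita}. The key observation is that the statement of Theorem \ref{thm:GMMorita} allows the bases $\mathcal{B}_\EuC$ and $\mathcal{B}_\EuD$ on the source and target categories to be chosen independently, and there is no restriction that the $A_\infty$ functor $F$ must interact with the bases in any particular way.

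Given this, my strategy is as follows. Suppose $\mathcal{B}_1$ and $\mathcal{B}_2$ are two choices of $\BbK$-basis for the morphism spaces of $\EuC$. I would apply Theorem \ref{thm:GMMorita} to the identity $A_\infty$ functor $\mathrm{id}_\EuC : \EuC \to \EuC$, viewing the source copy of $\EuC$ as equipped with the bases $\mathcal{B}_1$ and the target copy as equipped with $\mathcal{B}_2$. The theorem then yields the identity
\[ (\mathrm{id}_\EuC)_* \circ \left[\nabla^{\mathcal{B}_1}_v\right] = \left[\nabla^{\mathcal{B}_2}_v\right] \circ (\mathrm{id}_\EuC)_*\]
on $\HC^-_\bullet(\EuC)$. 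Since $(\mathrm{id}_\EuC)_*$ is the identity endomorphism of $\HC^-_\bullet(\EuC)$, this immediately gives $\left[\nabla^{\mathcal{B}_1}_v\right] = \left[\nabla^{\mathcal{B}_2}_v\right]$ for every derivation $v \in \deriv_\Bbbk \BbK$, which is exactly the content of the corollary.

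There is no obstacle to this argument beyond verifying the hypothesis of the theorem, which is trivial here. All of the substantive work — in particular, constructing the chain-level homotopies that reconcile the cochain representatives $v(\mu^*)$ written in different bases with the naturality of the operations $b^{1|1}$ and $B^{1|1}$ — has already been packaged into Theorem \ref{thm:GMMorita}, whose proof is deferred to Appendix \ref{app:GM}. Thus the corollary is essentially formal once the theorem is in hand.
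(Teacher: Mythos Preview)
Your proposal is correct and takes exactly the same approach as the paper: the paper's proof consists of the single sentence ``Follows from Theorem \ref{thm:GMMorita}, taking $F$ to be the identity functor.''
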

\begin{proof}
Follows from Theorem \ref{thm:GMMorita}, taking $F$ to be the identity functor.
\end{proof}

Henceforth, we simply write `$\nabla$' instead of `$\nabla^\mathcal{B}$'. 
It follows that, for any graded $\BbK$-linear $A_\infty$ category $\EuC$,  $(\HC_\bullet^-(\EuC),\nabla)$ is a well-defined unpolarized pre-\vshs{}. 
This completes the proof of Theorem \ref{thm:main} \eqref{it:unpprevshs}.

\section{Morita invariance}
\label{sec:mor}

\subsection{Morita equivalence}

We recall some material about $A_\infty$ bimodules from \cite[Section 2]{Seidel2008c}. 
If $\EuC$ and $\EuD$ are $A_\infty$ categories, we denote by $[\EuC,\EuD]$ the $\mathsf{dg}$ category of graded, $\BbK$-linear, cohomologically unital $A_\infty$ $(\EuC,\EuD)$ bimodules. 
Recall: morphisms are `pre-homomorphisms' of bimodules; the differential is given by \cite[Equation (2.8)]{Seidel2008c}; composition is given by \cite[Equation (2.9)]{Seidel2008c}.
 
Recall that if $\EuB$, $\EuC$ and $\EuD$ are $A_\infty$ categories, and $\EuM$ is an $A_\infty$ $(\EuC,\EuD)$ bimodule, then there is an induced $\mathsf{dg}$ functor
\begin{equation}
\label{eqn:moritafun} ? \otimes_\EuC \EuM: [\EuB,\EuC] \to [\EuB,\EuD].
\end{equation}
If $\EuD = \EuC$ and $\EuM = \EuC_\Delta$ is the diagonal bimodule, then the functor $? \otimes_\EuC \EuC_\Delta$ is quasi-isomorphic to the identity functor.

\begin{defn}
$\EuC$ and $\EuD$ are \emph{Morita equivalent} if there exists a $(\EuC,\EuD)$ bimodule $\EuM$, and a $(\EuD,\EuC)$ bimodule $\EuN$, and quasi-isomorphisms of $A_\infty$ bimodules
\[ \EuM \otimes_\EuD \EuN  \cong \EuC_\Delta \qquad \text{and} \qquad \EuN \otimes_\EuC \EuM \cong \EuD_\Delta.\]
In this situation, the functor \eqref{eqn:moritafun} is a quasi-equivalence. 
\end{defn}

We now recall that, given $A_\infty$ functors $F_i: \EuC_i \to \EuD_i$ for $i=0,1$, and a $(\EuD_0,\EuD_1)$ bimodule $\EuM$, we can define the pullback $(\EuC_0,\EuC_1)$ bimodule $(F_0 \otimes F_1)^* \EuM$ (see \cite[Section 2.8]{Ganatra2013}). 
We prove the following result in Appendix \ref{app:morita}:

\begin{lem}[= Lemma \ref{lem:moritafull}]
\label{lem:moritasplit}
If $F: \EuC \to \EuD$ is a cohomologically full and faithful $A_\infty$ functor, and $\EuD$ is split-generated by the image of $F$, then $\EuM := (F \otimes \mathrm{Id})^* \EuD_\Delta$ and $\EuN := (\mathrm{Id} \otimes F)^* \EuD_\Delta$ define a Morita equivalence between $\EuC$ and $\EuD$.
\end{lem}

Now, let $\twsplit \EuC$ denote the triangulated split-closure of $\EuC$ (denoted  `$\prod(Tw(\EuC))$' in \cite[Section 4c]{Seidel2008}). 
The following result is well-known:

\begin{thm}
\label{thm:moritasplit}
$\EuC$ and $\EuD$ are Morita equivalent if and only if $\twsplit \EuC$ and $\twsplit \EuD$ are quasi-equivalent.
\end{thm}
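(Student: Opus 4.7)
The plan is to identify $\twsplit \EuC$ with the full $A_\infty$ subcategory of perfect left $\EuC$-modules (iterated mapping cones and direct summands of representables) inside $\modules \EuC$ via the Yoneda embedding, and likewise for $\EuD$. With this identification in hand, each direction of the theorem reduces to transporting the relevant data back and forth along the perfect-module subcategories.

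For $(\Leftarrow)$, I would take a quasi-equivalence $F: \twsplit \EuC \to \twsplit \EuD$ together with a quasi-inverse $G$, and form the graph bimodules
\[ \EuM(X, Y) := hom^\bullet_{\twsplit \EuD}(Y, F(X)), \qquad \EuN(Y, X) := hom^\bullet_{\twsplit \EuC}(X, G(Y)), \]
for $X \in \EuC \subset \twsplit \EuC$ and $Y \in \EuD \subset \twsplit \EuD$. A standard (co)end/Yoneda calculation then shows that the bimodule quasi-isomorphisms $\EuM \otimes_\EuD \EuN \simeq \EuC_\Delta$ and $\EuN \otimes_\EuC \EuM \simeq \EuD_\Delta$ required for Morita equivalence are equivalent, respectively, to $G \circ F \simeq \mathrm{id}_{\twsplit \EuC}$ and $F \circ G \simeq \mathrm{id}_{\twsplit \EuD}$, which come for free from the assumption that $F$ is a quasi-equivalence.

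For $(\Rightarrow)$, given a Morita equivalence $(\EuM, \EuN)$, I would use the induced $\mathsf{dg}$ functors $T_\EuM := {?}\otimes_\EuC \EuM$ and $T_\EuN := {?}\otimes_\EuD \EuN$ between the (left) module categories $\modules \EuC$ and $\modules \EuD$. The hypothesis that $\EuM \otimes_\EuD \EuN \simeq \EuC_\Delta$ and $\EuN \otimes_\EuC \EuM \simeq \EuD_\Delta$, together with the fact that tensoring with the diagonal bimodule is quasi-isomorphic to the identity functor, implies that $T_\EuM$ and $T_\EuN$ are mutually quasi-inverse quasi-equivalences on the full module categories. The nontrivial step is to show that these functors restrict to quasi-equivalences of the perfect subcategories, i.e., that $T_\EuM$ carries each Yoneda representable $Y_X$ for $X \in \EuC$ to an object quasi-isomorphic to one in $\twsplit \EuD$. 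This is what I expect to be the main obstacle: I would address it by characterizing the perfect subcategory intrinsically as the smallest pretriangulated, split-closed full subcategory of $\modules \EuC$ generated by the representables, and observing that any quasi-equivalence of module categories preserves such a characterization. In the $A_\infty$ setting this last identification requires some careful bookkeeping to ensure compatibility with bimodule tensor products and with the construction of $\twsplit$ in \cite[Section 4c]{Seidel2008}, but once it is in place the theorem follows formally.
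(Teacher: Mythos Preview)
Your $(\Leftarrow)$ direction is fine in outline, though note that the paper's route is somewhat cleaner: rather than constructing a quasi-inverse $G$ and graph bimodules, the paper simply observes that each of the functors $\EuC \hookrightarrow \twsplit \EuC \to \twsplit \EuD \hookleftarrow \EuD$ is cohomologically full and faithful with split-generating image, and invokes Lemma~\ref{lem:moritafull} to conclude that each is a Morita equivalence. Your coend/Yoneda calculation works too, but requires you to extend the Yoneda identity from $\EuD$ to $\twsplit \EuD$ (since $F(X)$ lives in the latter), which is essentially the content of Lemma~\ref{lem:bimodsplit}.

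Your $(\Rightarrow)$ direction has a genuine gap. You propose to characterize the perfect subcategory ``intrinsically'' as the smallest pretriangulated, split-closed full subcategory of $\modules \EuC$ generated by the representables --- but this characterization is \emph{not} intrinsic: it explicitly singles out the representable modules, and a quasi-equivalence $T_\EuM: \modules \EuC \to \modules \EuD$ need not carry representables of $\EuC$ to objects lying in the split-closure of representables of $\EuD$. So ``any quasi-equivalence of module categories preserves such a characterization'' is false as stated. What you are missing is precisely the theorem of Thomason and Neeman (see \cite[Theorem~3.4]{Keller2006}): in a compactly generated triangulated category, the thick subcategory generated by a set of compact generators coincides with the full subcategory of compact objects. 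Compactness \emph{is} intrinsic (it is defined purely in terms of $\Hom$ commuting with coproducts), so any quasi-equivalence preserves it. This is exactly how the paper closes the argument: $\modules \EuC$ is compactly generated by the Yoneda modules, so $\twsplit \EuC \simeq \EuC^{perf}$ is the subcategory of compact objects, and the quasi-equivalence $\modules \EuC \simeq \modules \EuD$ therefore restricts. The ``careful bookkeeping'' you allude to is not bookkeeping at all --- it is this substantive input from triangulated category theory.
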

\begin{proof}
Suppose $\twsplit \EuC \simeq \twsplit \EuD$.
Consider the $A_\infty$ functors
\[ \EuC \hookrightarrow \twsplit \EuC \to \twsplit \EuD \hookleftarrow \EuD.\]
Each is cohomologically full and faithful with split-generating image, hence each defines a Morita equivalence by Lemma \ref{lem:moritasplit}.
This proves the `if'; Theorem \ref{thm:moritaonlyif} proves the `only if'.
\end{proof}

\subsection{Hochschild cohomology}

Generalizing \cite[Equation (2.200)]{Ganatra2013} slightly, we have the following:

\begin{lem}
\label{lem:ccbimod}
There are $A_\infty$ homomorphisms\footnote{We recall that $[\EuC,\EuD]$ is a $\mathsf{dg}$  category, and $A_\infty([\EuC,\EuD])$ is the corresponding $A_\infty$ category, in accordance with Definition \ref{defn:ainfdg}.}
\begin{equation}
\label{eqn:LCC} CC^\bullet(\EuC)  \xrightarrow{L_\EuM}  hom^\bullet_{A_\infty([\EuC,\EuD])}(\EuM,\EuM) \xleftarrow{R_\EuM} CC^\bullet(\EuD)^{op},
\end{equation}
with $L_\EuM$ given by the formula
\begin{multline}
 L^p_\EuM(\varphi_1,\ldots,\varphi_p)(a_1,\ldots,a_s,m,b_1,\ldots,b_t) := \\
 \sum (-1)^\dagger \mu^*_\EuM(a_1,\ldots,\varphi_1^*(a_{j_1+1}, \ldots), \ldots,\varphi_p^*(a_{j_p+1},\ldots),\ldots,a_1,m,b_1,\ldots, b_t), \quad\text{ where}
\end{multline}
\[ \dagger = \sum_{i=1}^p |\varphi_i|' \cdot \epsilon_{j_i},\]
and $R_\EuM$ given by the formula
\begin{multline}
 R^p_\EuM(\varphi_1,\ldots,\varphi_p)(a_1,\ldots,a_s,m,b_1,\ldots,b_t) := \\
 \sum (-1)^\dagger \mu^*_\EuM(a_1,\ldots,a_s,m,b_1,\ldots,\varphi_p^*(y_{j_p+1}, \ldots),\ldots,\varphi_1^*(y_{j_1+1},\ldots),\ldots,y_t),\quad\text{ where}
\end{multline}
\[ \dagger = \sum_{i<j} |\varphi_i|' \cdot |\varphi_j|' + \sum_{i=1}^p |\varphi_i|' \cdot \left( |a_1|'+ \ldots+|a_s|'+|m|+|b_1|' + \ldots + |b_{j_i}|'\right).\]
\end{lem}
\begin{proof}
The $A_\infty$ homomorphism equations are a consequence of the $A_\infty$ bimodule equations for $\EuM$.
\end{proof}

\begin{lem}
If $\EuM$ defines a Morita equivalence between $\EuC$ and $\EuD$, then $L_\EuM$ and $R_\EuM$ are quasi-isomorphisms. 
In particular, $\EuM$ induces an algebra isomorphism $\HH^\bullet(\EuC) \cong \HH^\bullet(\EuD)^{op}$.
\end{lem}
\begin{proof}
It suffices to prove that the chain maps $L^1_\EuM$ and $R^1_\EuM$ are quasi-isomorphisms. 
We start by observing that the following diagram of chain maps commutes up to homotopy:
\begin{equation}
\label{eqn:LRcomm}
\xymatrix{ CC^\bullet(\EuC) \ar[r]^-{L^1_\EuM} \ar[d]^{R^1_{\EuC_\Delta}} & hom^\bullet_{[\EuC,\EuD]} (\EuM,\EuM) \ar[d]^{\EuC_\Delta \otimes ?} \\
hom^\bullet_{[\EuC,\EuC]}(\EuC_\Delta,\EuC_\Delta) \ar[r]^-{ ? \otimes_\EuC \EuM} & hom^\bullet_{[\EuC,\EuD]}(\EuC_\Delta \otimes_\EuC \EuM,\EuC_\Delta \otimes_\EuC \EuM).}
\end{equation}
Indeed, the homotopy $H: CC^\bullet(\EuC) \to hom^\bullet_{[\EuC,\EuD]}(\EuC_\Delta \otimes_\EuC \EuM,\EuC_\Delta \otimes_\EuC \EuM)$ is given by
\begin{align*} H(\varphi^*)^{0|1|0}(a_0 , a_1 ,\ldots , a_s , m) &:= \sum (-1)^\dagger (a_0, a_1 , \ldots, \varphi^*(a_{j+1},\ldots),   \ldots , a_s, m),\quad\text{ where}\\
 \dagger &:= |\varphi|' \cdot \varepsilon_j,\\
 H(\varphi^*)^{j|1|k} &=0 \quad\text{for $j>0$ or $k>0$.}
 \end{align*}

We now observe that $R^1_{\EuC_\Delta}$ is a quasi-isomorphism by \cite[Proposition 2.5]{Ganatra2013}. 
$\EuC_\Delta \otimes ?$ is a quasi-isomorphism because it is quasi-isomorphic to the identity functor. 
$? \otimes_\EuC \EuM$ is a quasi-isomorphism because $\EuM$ defines a Morita equivalence. 
Therefore, the chain map $L^1_\EuM$ is a quasi-isomorphism, by commutativity of the diagram. 
The proof that $R^1_\EuM$ is a quasi-isomorphism is analogous. 
\end{proof}

\begin{lem}
The isomorphism $\HH^\bullet(\EuC) \cong \HH^\bullet(\EuC)^{op}$, induced by the diagonal bimodule, is the identity. 
In particular, $\HH^\bullet(\EuC)$ is graded commutative (cf. \cite{Gerstenhaber1963}).
\end{lem}
\begin{proof}
It suffices to check that the chain maps
\[ L^1_{\EuC_\Delta}, -R^1_{\EuC_\Delta}: CC^\bullet(\EuC) \to hom^\bullet_{A_\infty([\EuC,\EuC])}(\EuC_\Delta,\EuC_\Delta)\]
are chain-homotopic (Remark \ref{rmk:minusop} explains the minus sign).
Indeed, the homotopy is given by
\[ H(\varphi)^{s|1|t}(a_1,\ldots,a_s,m,b_1,\ldots,b_t) := \varphi(a_1,\ldots,a_s,m,b_1,\ldots,b_t).\]
\end{proof}

\begin{cor}
\label{cor:hcohmorita}
A Morita equivalence between $\EuC$ and $\EuD$ induces an isomorphism of graded $\BbK$-algebras
\begin{equation}
\label{eqn:hcohmorita}
 \HH^\bullet(\EuC) \cong \HH^\bullet(\EuD).
 \end{equation}
\end{cor}

\begin{prop}
\label{prop:KSmorita}
The isomorphism \eqref{eqn:hcohmorita} respects Kodaira--Spencer maps.
\end{prop}
\begin{proof}
Let $\EuM$ be a $(\EuC,\EuD)$ bimodule which defines a Morita equivalence between $\EuC$ and $\EuD$, and let us choose a basis for the morphisms spaces of $\EuC$, $\EuD$ and $\EuM$. 
Given a derivation $v \in \deriv_\Bbbk \BbK$, we have
\[ 0 = \partial(v(\mu^*_\EuM)) + L^1_\EuM(v(\mu^*_\EuC)) - R^1_\EuM(v(\mu^*_\EuD)),\]
as follows by applying $v$ to the $A_\infty$ bimodule equations for $\EuM$. 
Therefore $L^1_\EuM(v(\mu^*_\EuC)) = R^1_\EuM(v(\mu^*_\EuD))$ on the level of cohomology, and the result follows.
\end{proof}

Taking $\EuC=\EuD$ and $\EuM=\EuC_\Delta$, we obtain:

\begin{cor}\label{cor:ksbasesindep}
The class $\mathsf{KS}(v) := [v(\mu^*)] \in \HH^2(\EuC)$ does not depend on the choice of $\BbK$-bases for the morphism spaces of $\EuC$.
\end{cor}

\subsection{Hochschild homology}

We recall the notion of cyclic tensor product of bimodules, from \cite[Section 5]{Seidel2008c}. 
If $\EuC_1,\ldots,\EuC_l=\EuC_0$ are $A_\infty$ categories, and $\EuM_i$ a $(\EuC_{i-1},\EuC_i)$ bimodule for $i=1\ldots,l$, we can form the cyclic tensor product $\EuM_1 \otimes_{\EuC_1} \EuM_2 \otimes_{\EuC_2} \ldots \otimes_{\EuC_{l-1}}\EuM_l \otimes_{\EuC_l} cyc$. 
It is a chain complex with underlying vector space
\[ \bigoplus_{X_{i,j} \in Ob(\EuC_i)} \EuM_1(X_{0,j_0},X_{1,1}) \otimes \EuC_1(X_{1,1},\ldots,X_{1,j_1}) \otimes \EuM_2(X_{1,j_1},X_{2,1}) \otimes \ldots \otimes \EuM_l(X_{l-1,j_l},X_{0,1}) \otimes \EuC_0(X_{0,1},\ldots,X_{0,j_0}),\]
and differential as in \cite[Equation (5.1)]{Seidel2008c}. 
As a particular case, we have the identification $CC_\bullet(\EuC) = \EuC_\Delta \otimes_\EuC cyc$.

\begin{lem}
\label{lem:hochhommor}
A Morita equivalence between $\EuC$ and $\EuD$ induces an isomorphism of graded vector spaces
\begin{equation}
\label{eqn:hhommorita}
\HH_\bullet(\EuC) \cong \HH_\bullet(\EuD).
\end{equation}
\end{lem}
\begin{proof}
Let $\EuM$ be a $(\EuC,\EuD)$ bimodule and $\EuN$ a $(\EuD,\EuC)$ bimodule which define a Morita equivalence between $\EuC$ and $\EuD$. 
Then we have a chain of quasi-isomorphisms
\[CC_\bullet(\EuC)  = \EuC_\Delta \otimes_\EuC cyc.  \simeq \EuM \otimes_\EuD \EuN \otimes_\EuC cyc.  \simeq \EuN \otimes_\EuC \EuM \otimes_\EuD cyc. \simeq \EuD_\Delta \otimes_\EuD cyc. = CC_\bullet(\EuD).\]
\end{proof}

\begin{rmk}
The isomorphism \eqref{eqn:hhommorita} respects the module structure over Hochschild cohomology.
\end{rmk}

\begin{lem}
\label{lem:hhommormap}
Let $F: \EuC \to \EuD$ is an $A_\infty$ functor, so that the $(\EuC,\EuD)$ bimodule $\EuM := (F \otimes \mathrm{Id})^*\EuD_\Delta$ and the $(\EuD,\EuC)$ bimodule $\EuN := (\mathrm{Id} \otimes F)^* \EuD_\Delta$ define a Morita equivalence between $\EuC$ and $\EuD$ (compare Lemma \ref{lem:moritasplit}). 
Then the induced isomorphism \eqref{eqn:hhommorita} coincides with the map $F_*$ defined in Lemma \ref{lem:FHH}.
\end{lem}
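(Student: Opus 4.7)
My plan is to trace the explicit chain of quasi-isomorphisms constructed in the proof of Lemma \ref{lem:hochhommor}, specialized to the graph bimodules $\EuM = (F \otimes \mathrm{Id})^*\EuD_\Delta$ and $\EuN = (\mathrm{Id} \otimes F)^*\EuD_\Delta$, and to check that it reproduces $F_*$ on cohomology. The structural observation underpinning the computation is that, for these graph bimodules, the Morita quasi-isomorphism $\EuC_\Delta \to \EuM \otimes_\EuD \EuN$ factors through the natural $(\EuC,\EuC)$-bimodule map $\Gamma_F:\EuC_\Delta \to (F\otimes F)^*\EuD_\Delta$ whose components are (up to signs) the higher functor components $F^s$; dually, the quasi-isomorphism $\EuN \otimes_\EuC \EuM \to \EuD_\Delta$ is the composition map in $\EuD$. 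These two features are precisely what produce the combinatorial formula defining $F_*$.

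Concretely, I would first identify, via the bar resolution, an explicit quasi-isomorphism $\EuM\otimes_\EuD \EuN \to (F\otimes F)^*\EuD_\Delta$ (collapsing the internal $\EuD$-morphism factors by composition) and observe that under this identification the Morita map $\EuC_\Delta \to \EuM\otimes_\EuD \EuN$ coincides with $\Gamma_F$. This allows me to rewrite the chain of quasi-isomorphisms from Lemma \ref{lem:hochhommor} as the composite
\[ CC_\bullet(\EuC,\EuC_\Delta)\overset{\Gamma_{F,*}}{\to} CC_\bullet(\EuC,(F\otimes F)^*\EuD_\Delta) \overset{\sim}{\to} CC_\bullet(\EuD,\EuD_\Delta), \]
where the second arrow is the change-of-labels map sending a Hochschild chain indexed by objects of $\EuC$ to the corresponding chain indexed by their $F$-images. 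Unwinding the effect on a generator $a_0[a_1|\ldots|a_s]$: the first arrow partitions the inputs $(a_0,\ldots,a_s)$ into consecutive blocks and applies $F^s$ to each block (which is what the components of $\Gamma_F$ do); the second arrow reinterprets the resulting $\EuD$-morphisms as a Hochschild chain in $\EuD$. The composite is then precisely
\[ \sum F^*(\overbrace{a_0,\ldots})[F^*(\ldots)|\ldots|F^*(\ldots,a_s)], \]
which is the definition of $F_*$ in Lemma \ref{lem:FHH}.

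The main obstacle will be sign bookkeeping through the bar resolution and through the cyclic shift $\overline{\EuM}\otimes_\EuD \EuN\otimes_\EuC cyc. \simeq \EuN\otimes_\EuC \overline{\EuM}\otimes_\EuD cyc.$; however, once the chain-level identifications above are in place these signs are forced by the conventions fixed in Sections \ref{s:ainf} and \ref{subsec:hochhom} and can in principle be checked using the graphical notation of Appendix \ref{app:signs}. Since Lemma \ref{lem:hochhommor} produces the Morita-induced isomorphism only on cohomology, it is enough to verify the agreement with $F_*$ after passing to $\HH_\bullet$, and that follows directly from the chain-level computation above.
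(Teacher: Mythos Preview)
Your strategy is different from the paper's and, as written, has a gap at exactly the point you flag as ``sign bookkeeping.''

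The paper does not try to refactor the Morita chain of Lemma~\ref{lem:hochhommor} through $CC_\bullet(\EuC,(F\otimes F)^*\EuD_\Delta)$. Instead it stays on the intermediate complex $\EuM\otimes_\EuD\EuN\otimes_\EuC cyc.$ and writes down two explicit chain maps from it to $CC_\bullet(\EuD)$: one arising from the $\EuD$-side of the Morita zig-zag, and one arising from the $\EuC$-side followed by $F_*$. It then exhibits an explicit chain homotopy $H$ between them. That single homotopy is the entire content of the proof.

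Your route requires two things you have not established. First, the ``change-of-labels'' map $CC_\bullet(\EuC,(F\otimes F)^*\EuD_\Delta)\to CC_\bullet(\EuD,\EuD_\Delta)$ is not just a relabelling: the tail entries $a_i$ are $\EuC$-morphisms, so the map must involve the higher $F^*$ as well, and you need to check it is a chain map (or produce it as a composite of maps you already know to be chain maps). Second, and more seriously, you must show that your two-step composite actually \emph{equals} the Morita isomorphism of Lemma~\ref{lem:hochhommor} on cohomology. That lemma builds the isomorphism via several bar-resolution collapses and the cyclic swap $\overline{\EuM}\otimes_\EuD\EuN\otimes_\EuC cyc.\simeq \EuN\otimes_\EuC\overline{\EuM}\otimes_\EuD cyc.$; identifying $\EuM\otimes_\EuD\EuN$ with $(F\otimes F)^*\EuD_\Delta$ on the $\EuC$-side does not by itself tell you what happens after the swap on the $\EuD$-side. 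Proving this compatibility is essentially equivalent to producing the homotopy the paper writes down, so you have deferred rather than avoided the computation.

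If you want to complete your argument, the cleanest fix is to do what the paper does: write down the two concrete chain maps $\EuM\otimes_\EuD\EuN\otimes_\EuC cyc.\to CC_\bullet(\EuD)$ and the explicit homotopy
\[
H\bigl(m[b_1|\ldots|b_t]n[a_1|\ldots|a_s]\bigr) := m[b_1|\ldots|b_t|n|F^*(a_1,\ldots)|\ldots|F^*(\ldots,a_s)].
\]
This is short and bypasses the need to independently justify your refactored chain.
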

\begin{proof}
The key point is to check that the maps $\EuM \otimes_\EuD \EuN \otimes_\EuC cyc. \to \EuD_\Delta \otimes_\EuD cyc.$ 
given by
\begin{multline}
 m[ b_1|\ldots|b_t]n[a_1|\ldots|a_s] \mapsto\\
 \sum \mu^*(b_{j+1},\ldots,b_t,n,F^*(a_1,\ldots),\ldots,F^*(\ldots,a_s),m,b_1,\ldots)[b_{k+1}|\ldots|b_j]
\end{multline}
and
\begin{multline}
 m[b_1|\ldots|b_t]n[a_1|\ldots|a_s] \mapsto \\
 \sum \mu^*(\ldots,F^*(\ldots,a_s),m,b_1,\ldots,b_t,n,F^*(a_1,\ldots),\ldots)[F^*(a_{k+1},\ldots)|\ldots|F^*(\ldots,a_j)]
\end{multline}
are chain homotopic. 
The chain homotopy is given by
\begin{equation*}
 H(m[ b_1|\ldots|b_t]n[a_1|\ldots|a_s]) := m[b_1|\ldots|b_t|n|F^*(a_1,\ldots)|\ldots|F^*(\ldots,a_s)].
 \end{equation*}
\end{proof}

\begin{cor}
\label{cor:cycGMmor}
$\HC_\bullet^-(\EuC), \HP_\bullet(\EuC)$ and $\HC_\bullet^+(\EuC)$ are Morita invariants. 
So is the Getzler--Gauss--Manin connection.
\end{cor}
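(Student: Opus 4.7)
The plan is to reduce a general Morita equivalence between $\EuC$ and $\EuD$ to a zig-zag of honest $A_\infty$ functors, then invoke the results established for functors earlier in the section. Specifically, by Theorem \ref{thm:moritasplit}, Morita equivalence of $\EuC$ and $\EuD$ is the same as a quasi-equivalence $\twsplit \EuC \simeq \twsplit \EuD$. Combined with the canonical inclusions $i_\EuC : \EuC \hookrightarrow \twsplit \EuC$ and $i_\EuD : \EuD \hookrightarrow \twsplit \EuD$ (each of which is itself a Morita equivalence, by the defining property of the split-closure), this produces a zig-zag of $A_\infty$ functors
\[
  \EuC \xrightarrow{i_\EuC} \twsplit \EuC \xleftarrow{\sim} \twsplit \EuD \xleftarrow{i_\EuD} \EuD .
\]

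Next I would check that each arrow $F$ in the zig-zag induces an isomorphism on Hochschild homology at the level of the functorial map $F_*$ of Lemma \ref{lem:FHH}. For the two inclusions this is precisely Lemma \ref{lem:hhommormap}, which identifies the functorial map $F_*$ with the abstract Morita isomorphism of Lemma \ref{lem:hochhommor}, applied with the bimodules $\EuM = (F \otimes \mathrm{Id})^*\EuD_\Delta$ and $\EuN = (\mathrm{Id} \otimes F)^*\EuD_\Delta$. For the middle quasi-equivalence, one may either cite the standard fact that a quasi-equivalence induces a quasi-isomorphism on Hochschild chains, or simply appeal to Lemma \ref{lem:hhommormap} again, observing that any quasi-equivalence is trivially a Morita equivalence via these same bimodules.

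With $F_*$ an isomorphism on $\HH_\bullet$ for every arrow of the zig-zag, Corollary \ref{cor:cycmorita} promotes each to an isomorphism on $\HC^-_\bullet$, $\HP_\bullet$, and $\HC^+_\bullet$; composing along the zig-zag then yields Morita invariance of all three theories. For the Getzler--Gauss--Manin connection one applies Theorem \ref{thm:GMMorita} to each of the three arrows: each functor induces a map on $\HC^-_\bullet$ that intertwines the connection, and the composite is the desired connection-preserving isomorphism $\HC^-_\bullet(\EuC) \cong \HC^-_\bullet(\EuD)$.

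The main step that is not entirely mechanical is verifying that the inclusion $i_\EuC : \EuC \hookrightarrow \twsplit \EuC$ does define a Morita equivalence in the specific bimodule form required by Lemma \ref{lem:hhommormap}; this is essentially the content of Appendix \ref{app:morita}, and once it is established, everything else here is a formal concatenation of results already at our disposal.
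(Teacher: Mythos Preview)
Your proposal is correct and follows essentially the same route as the paper: reduce to the zig-zag $\EuC \hookrightarrow \twsplit \EuC \to \twsplit \EuD \hookleftarrow \EuD$ via Theorem \ref{thm:moritasplit}, then use Lemma \ref{lem:hhommormap} together with Lemma \ref{lem:hochhommor} to see that each functorial map $F_*$ is an isomorphism on $\HH_\bullet$, promote via Corollary \ref{cor:cycmorita}, and apply Theorem \ref{thm:GMMorita} for the connection. The only cosmetic difference is the direction of the middle arrow in your zig-zag; the paper also treats all three functors uniformly via Lemma \ref{lem:hhommormap} (with the hypothesis supplied by Lemma \ref{lem:moritafull}), so your remark about alternative justifications for the quasi-equivalence step is superfluous but harmless.
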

\begin{proof}
Suppose $\EuC$ and $\EuD$ are Morita equivalent. 
It follows by Theorem \ref{thm:moritasplit} that we have $A_\infty$ functors
\[ \EuC \hookrightarrow \twsplit \EuC \to \twsplit \EuD \hookleftarrow \EuD.\]
Each of these induces a map on Hochschild and cyclic homology, by Lemmas \ref{lem:FHH} and \ref{lem:FHC}. 
Furthermore, the maps on Hochschild homology coincide with the corresponding maps \eqref{eqn:hhommorita}, by Lemma \ref{lem:hhommormap}; so they are isomorphisms, by Lemma \ref{lem:hochhommor}. 
Therefore, the induced maps on cyclic homology are isomorphisms, by Corollary \ref{cor:cycmorita}: furthermore, they respect the Getzler--Gauss--Manin connections, by Theorem \ref{thm:GMMorita}.
\end{proof}

\section{Pairings on Hochschild and cyclic homology} 
\label{sec:mukai}

\subsection{The Mukai pairing for $\mathsf{dg}$ categories}

Let $\EuC$ be a $\BbK$-linear $\mathsf{dg}$ category. 
We recall a construction due to Shklyarov \cite{Shklyarov2012}. 

There is a natural notion of tensor product of $\BbK$-linear $\mathsf{dg}$ categories, and there is a K\"{u}nneth quasi-isomorphism of Hochschild chain complexes \cite[Theorem 2.8]{Shklyarov2012}
\begin{equation}
\label{eqn:kunndg}
C_\bullet(\EuC) \otimes C_\bullet(\EuD) \to C_\bullet(\EuC \otimes \EuD).
\end{equation}

If $\EuC$ and $\EuD$ are $\mathsf{dg}$ categories, then a $\mathsf{dg}$ $(\EuC,\EuD)$ bimodule $\EuQ$ consists of the following data: for each pair $(X,Y) \in Ob(\EuC) \times Ob(\EuD)$, a graded $\BbK$-vector space $\EuQ^\bullet(X,Y)$ equipped with a differential $d$ of degree $+1$; left-module maps
\begin{eqnarray*}
hom^\bullet_\EuC(X_1,X_2) \otimes \EuQ^\bullet(X_0,X_1) &\to& \EuQ^\bullet(X_0,X_2) \\
f \otimes q & \mapsto & f \cdot q;
\end{eqnarray*}
and right-module maps
\begin{eqnarray*}
\EuQ^\bullet(X_1,X_2) \otimes hom^\bullet_{\EuD}(X_0,X_1) &\to& \EuQ^\bullet(X_0,X_2) \\
q \otimes g & \mapsto & q \cdot g
\end{eqnarray*}
satisfying the obvious analogues of associativity \eqref{eqn:assoc}, the Leibniz rule \eqref{eqn:leibniz} and unitality \eqref{eqn:dgunit}.

A $\mathsf{dg}$ $(\EuC,\EuD)$ bimodule $\EuP$ is equivalent to a $\mathsf{dg}$ functor $\EuP: \EuC \otimes \EuD^{op} \to \modules \BbK$. 
On the level of objects, the functor sends $(X,Y) \mapsto \EuP^\bullet(X,Y)$. 
To define the functor on the level of morphisms, we first define, for any $c \in hom^\bullet_\EuC(X_1,X_2)$,
\begin{eqnarray*}
L(c): \EuP(X_0,X_1) &\to& \EuP(X_0,X_2),\\
p & \mapsto & c \cdot p.
\end{eqnarray*}
Similarly, for any $d \in hom^\bullet_\EuD(X_0,X_1)$, we define
\begin{eqnarray*}
R(d): \EuP(X_1,X_2) &\to& \EuP(X_0,X_2),\\
p & \mapsto & (-1)^{|p| \cdot |d|} p \cdot d.
\end{eqnarray*}
We then define the functor on the level of morphisms: $\EuP(c \otimes d) := L(c) \circ R(d)$.

By functoriality of Hochschild homology, a $\mathsf{dg}$ $(\EuC,\EuD)$ bimodule $\EuP$ induces a chain map
\[C_\bullet(\EuC \otimes \EuD^{op}) \to C_\bullet(\modules \BbK). \]
Pre-composing this with the K\"{u}nneth quasi-isomorphism \eqref{eqn:kunndg} gives another chain map, which induces a map on cohomology
\begin{equation}
\label{eqn:wedgepdg} \wedge_\EuP: \HH_\bullet(\EuC) \otimes \HH_\bullet(\EuD^{op}) \to \HH_\bullet(\modules \BbK).
\end{equation}

Now we consider the full $\mathsf{dg}$ sub-category $\perfdg \BbK \subset \modules \BbK$, whose objects are the cochain complexes with finite-dimensional cohomology. 
There is an obvious $\mathsf{dg}$ functor $\BbK \hookrightarrow \perfdg \BbK$ given by including the full subcategory with the single object $\BbK[0]$. 
This induces an isomorphism 
\begin{equation}
\label{eqn:FLSdef}
 \BbK \cong \HH_\bullet(\BbK) \overset{\cong}{\longrightarrow} \HH_\bullet(\perfdg \BbK),
\end{equation}
whose inverse is called the `Feigin--Losev--Shoikhet trace' in \cite{Shklyarov2012}:
\begin{equation}
\label{eqn:FLS}
 \int: \HH_\bullet(\perfdg \BbK) \overset{\cong}{\longrightarrow} \BbK.
\end{equation}

\begin{defn}
\label{defn:dgmukai}
We call a $\mathsf{dg}$ $(\EuC,\EuD)$ bimodule $\EuP$ \emph{proper} if $\EuP(X,Y) \in \perfdg \BbK$ for all $(X,Y) \in Ob(\EuC) \times Ob(\EuD)$. 
A proper bimodule induces a pairing
\begin{eqnarray*}
C_\bullet(\EuC) \otimes C_\bullet(\EuD^{op}) & \to & \BbK\\
\alpha \otimes \beta & \mapsto & \int \wedge_\EuP(\alpha,\beta).
\end{eqnarray*}
If $\EuC$ is a proper $\mathsf{dg}$ category, we call the pairing 
\begin{eqnarray*}
\langle , \rangle_{Muk}:\HH_\bullet(\EuC) \otimes \HH_\bullet(\EuC) & \to & \BbK,\\
\langle \alpha ,\beta\rangle_{Muk} & := & \int \wedge_{\EuC_\Delta}(\alpha,\beta^\vee)
\end{eqnarray*}
the \emph{Mukai pairing}. 
Shklyarov shows that the Mukai pairing is Morita invariant. 
\end{defn}

\begin{lem}
\label{lem:flsstr}
Let $\fink \BbK \subset \perfdg \BbK$ denote the full subcategory whose objects are the finite-dimensional cochain complexes. 
There is a chain map
\begin{align*} 
\mathsf{Str}: C_\bullet(\fink \BbK) &\to \BbK,\quad\text{sending}\\
a_0 & \mapsto  \mathsf{str}(a_0), \\
a_0[a_1|\ldots|a_n] & \mapsto  0 \mbox{ for $n \ge 1$},
\end{align*}
where `$\mathsf{str}$' on the first line denotes the supertrace.\footnote{
If $V$ is a graded $\BbK$-vector space, and $F \in \mathrm{End}_\BbK(V)$ an endomorphism, we define $\mathsf{str}(F)$ as follows: write $F$ as a sum of components $F_{pq} \in \Hom_\BbK(V_p,V_q)$, then $\mathsf{str}(F) := \sum_p (-1)^p \mathsf{tr}(F_{pp})$.
}
It induces a map $\mathsf{Str}:\HH_\bullet(\fink \BbK) \to \BbK$; this coincides with the composition
\[ \HH_\bullet(\fink \BbK) \to \HH_\bullet(\perfdg \BbK) \overset{\int}{\to} \BbK.\]
\end{lem}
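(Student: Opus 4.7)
The plan is to establish the lemma in two stages: first verify that the prescribed formula for $\mathsf{Str}$ defines a chain map from $C_\bullet(\mathsf{fin}\,\BbK)$ to $\BbK$ (with $\BbK$ placed in degree zero), and then identify the induced map on $\HH_\bullet(\mathsf{fin}\,\BbK)$ with the composition $\int \circ \iota_*$, where $\iota\colon \mathsf{fin}\,\BbK \hookrightarrow \perfdg \BbK$ denotes the inclusion.

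For the chain-map claim I need to show $\mathsf{Str}\circ b = 0$. Since $\mathsf{Str}$ annihilates chains of positive length, and $\mathsf{fin}\,\BbK$ is a $\mathsf{dg}$ category so $\mu^{\geq 3} = 0$, the only chains $\alpha$ for which $b\alpha$ can have a nonzero length-zero part are those of length $s=0$ or $s=1$. For $s=0$, the length-zero part of $b(a_0)$ is $\mu^1(a_0) = d_X\circ a_0 - (-1)^{|a_0|} a_0\circ d_X$, a graded commutator with $d_X$, whose supertrace vanishes. For $s=1$, unpacking the overbrace convention in \eqref{eqn:ccdiffb} shows that the length-zero part of $b(a_0[a_1])$ is $\mu^2(a_0,a_1) + (-1)^{|a_1|'\cdot|a_0|'}\mu^2(a_1,a_0)$; substituting $\mu^2(f,g)=(-1)^{|f|} f\cdot g$ and applying graded cyclicity of supertrace, $\mathsf{str}(f\circ g)=(-1)^{|f||g|}\mathsf{str}(g\circ f)$, a direct sign computation shows the two terms cancel.

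For the identification in the second stage I exploit the fact that $\HH_\bullet(\mathsf{fin}\,\BbK)$ reduces to $\HH_\bullet(\BbK)\cong\BbK$. More precisely, every object of $\mathsf{fin}\,\BbK$ is quasi-isomorphic to its cohomology, which is a finite direct sum of shifts of $\BbK[0]$; so $\BbK$ split-generates $\mathsf{fin}\,\BbK$, and by Theorem \ref{thm:moritasplit} the inclusion $j'\colon\BbK\hookrightarrow\mathsf{fin}\,\BbK$ is a Morita equivalence. By Lemma \ref{lem:hhommormap}, the resulting isomorphism on $\HH_\bullet$ coincides with $(j')_*$. It therefore suffices to verify the identity after precomposing with $(j')_*$. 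The Feigin-Losev-Shoikhet trace is defined by $\int\circ(\iota\circ j')_* = \mathrm{id}_\BbK$, while the length-zero chain corresponding to $1\in\BbK$ maps under $\mathsf{Str}\circ(j')_*$ to $\mathsf{str}(\mathrm{id}_\BbK)=1$; so both maps equal the identity on $\HH_\bullet(\BbK)\cong\BbK$, and hence agree on all of $\HH_\bullet(\mathsf{fin}\,\BbK)$.

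The main obstacle will be the sign bookkeeping in the $s=1$ case of the first stage: one must carefully combine the cyclic-permutation sign $(-1)^{|a_1|'|a_0|'}$ coming from the overbrace formula, the $\mathsf{dg}$-to-$A_\infty$ sign $(-1)^{|a|}$ built into $\mu^2$, and the graded cyclicity sign of the supertrace, and verify that these three factors conspire to produce an exact cancellation. Everything else in the argument is structural.
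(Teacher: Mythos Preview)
Your proposal is correct and follows essentially the same line as the paper's proof: verify directly that $\mathsf{Str}$ is a chain map, then use that the inclusion of $\BbK$ induces an isomorphism on Hochschild homology and check that $\mathsf{Str}$ is a left inverse to it. The only cosmetic difference is that the paper invokes the quasi-equivalence $\mathsf{fin}\,\BbK \hookrightarrow \perfdg\BbK$ rather than the Morita equivalence $\BbK \hookrightarrow \mathsf{fin}\,\BbK$, but both routes reduce to the same left-inverse check on the generator; your explicit sign computation for the $s=0,1$ cases just spells out what the paper leaves as ``one easily verifies.''
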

\begin{proof}
One easily verifies that $\mathsf{Str}$ is a chain map.
The inclusion $\fink \BbK \hookrightarrow \perfdg \BbK$ is a quasi-equivalence, so induces an isomorphism of Hochschild homologies. 
It is obvious that $\mathsf{Str}$ is left-inverse to the map induced by the inclusion \eqref{eqn:FLSdef}, and the result follows.
\end{proof}

Shklyarov derives the following formula for $\wedge_\EuP$: if $\alpha = a_0[a_1|\ldots|a_s] \in C_\bullet(\EuC)$ and $\beta = b_0[b_1|\ldots|b_t] \in C_\bullet(\EuD^{op})$, then
\begin{equation}
\label{eqn:Shkform}
\wedge_{\EuP} (\alpha,\beta) = (-1)^{|b_0| \cdot (|a_1|'+ \ldots + |a_s|')}  L(a_0)R(b_0)\mathsf{sh}_{st}[L(a_1)|\ldots|L(a_s)|R(b_1)|\ldots|R(b_t)].
\end{equation}
Here, $\mathsf{sh}_{st}$ denotes the sum of all $(s,t)$-shuffles of the elements in the square brackets, with the associated Koszul signs (where interchanging $L(a_i)$ with $R(b_j)$ introduces a sign $|a_i|'\cdot |b_j|'$).
To clarify: the symbols `$L(a_i)$' and `$R(b_j)$' in \eqref{eqn:Shkform} are regarded as morphisms in the $\mathsf{dg}$ category $\modules \BbK$.

\subsection{$A_\infty$ multifunctors}

The notion of tensor product of $A_\infty$ categories is rather involved \cite{Amorim2016}. 
Nevertheless there is a relatively straightforward notion of $A_\infty$ $n$-functor $\EuC_1 \times \ldots \times \EuC_n \dashrightarrow \EuD$, which forms a substitute for the notion of an $A_\infty$ functor $\EuC_1 \otimes \ldots \otimes \EuC_n \dashrightarrow \EuD$, and suffices for many purposes. We give the definition, following \cite{Lyubashenko2012}.

\begin{defn}
\label{defn:multifun}
Let $\EuC_1,\ldots,\EuC_n$ and $\EuD$ be $A_\infty$ categories. 
An \emph{ $A_\infty$ $n$-functor} $F: \EuC_1 \times \ldots \times \EuC_n \dashrightarrow \EuD$ consists of a map  $F: Ob(\EuC_1) \times \ldots \times Ob(\EuC_n) \to Ob(\EuD)$, together with $\BbK$-linear maps
\[ F^{s_1;\ldots;s_n}: \EuC_1(X^1_1,\ldots,X^1_{s_1}) \otimes \ldots \otimes \EuC_n(X^n_1,\ldots,X^n_{s_n}) \to \EuD(F(X^1_1,\ldots,X^n_1),F(X^1_{s_1},\ldots,X^n_{s_n}))\]
of degree $0$, such that $F^{0;0;\ldots;0} = 0$, and satisfying the \emph{$A_\infty$ $n$-functor relations} (a visual representation of which is given in Figure \ref{fig:ainfmultifun}):
\begin{multline}
\sum_{i,j,k} (-1)^\dagger F^{s_1;\ldots;s_i +1-k;\ldots;s_n}(c^1_1,\ldots,c^1_{s_1};\ldots;c^i_1,\ldots,\mu^k_{\EuC_i}(c^i_{j},\ldots),c^i_{j+k+1},\ldots,c^i_{s_i};c^n_1,\ldots,c^n_{s_n})\\
= \sum_{k,i_{p,q}} (-1)^\maltese \mu^k_\EuD(F(c^1_1,\ldots,c^1_{i_{1,1}};\ldots;c^n_1,\ldots,c^n_{i_{n,1}}),\ldots,F(c^1_{i_{1,k}+1},\ldots,c^n_{i_{n,k}+1})).
\end{multline}
The sign $\dagger$ is the Koszul sign obtained by commuting $\mu^k_{\EuC_i}$ (equipped with degree $1$) to the front of the expression (where each $c^p_q$ is equipped with its reduced degree $|c^p_q|'$). 
We henceforth adopt the convention, in expressions involving $A_\infty$ multifunctors, that $(-1)^\maltese$ is the Koszul sign associated to re-ordering the inputs $c^p_q$ in the expression so that they appear in the order $(c^1_1,\ldots,c^1_{s_1};\ldots;c^n_1,\ldots,c^n_{s_n})$ (still equipping the $c^p_q$ with their reduced degrees). 

If $\EuC_i$ is strictly unital (with units denoted $e$), we say that $F$ is \emph{strictly unital in the $i$th entry} if
\[ F^{s_1;\ldots;s_n}(\ldots;a^i_1,\ldots,e,\ldots,a^i_{s_i};\ldots) = \left\{ \begin{array}{rl}
													e & \mbox{ if $s_j = 0$ for all $j \neq i$, and $s_i = 1$;} \\
													0 & \mbox{ otherwise.}
											\end{array} \right.
\]
\end{defn}

\begin{lem}
An $A_\infty$ $n$-functor $F:\EuC_1 \times \ldots \times \EuC_n \dashrightarrow \EuD$ induces a functor
\[ H^\bullet(\EuC_1) \otimes \ldots \otimes H^\bullet(\EuC_n) \to H^\bullet(\EuD)\]
(the tensor product on the left is defined by considering each $H^\bullet(\EuC_i)$ as a $\mathsf{dg}$ category with trivial differential -- in particular the composition involves the Koszul sign rule). 
The action on objects is obvious, and on morphisms it sends
\[ [a_1] \otimes \ldots \otimes [a_n] \mapsto [F^{1;0;\ldots;0}(a_1)] \cdot \ldots \cdot [F^{0;\ldots;0;1}(a_n)].\]
If $F$ is strictly unital in all entries, then this functor is unital.
\end{lem}
\begin{proof}
The components $F^{0;\ldots;0;s;0;\ldots;0}$ of $F$ define $A_\infty$ functors $\EuC_i \dashrightarrow \EuD$ for each $i$, which induce functors $H^\bullet(\EuC_i) \to H^\bullet(\EuD)$ by taking cohomology. 
It now suffices to check that elements in (distinct) images of these functors supercommute, which is a consequence of the following $A_\infty$ $n$-functor relation, written in the case $n=2$ to avoid notational clutter:
\begin{multline}
F^{1;1}(\mu^1(a);b)+(-1)^{|a|'}F^{1;1}(a;\mu^1(b)) =\\
 \mu^2(F^{1;0}(a;),F^{0;1}(;b)) + (-1)^{|a|'\cdot |b|'} \mu^2(F^{0;1}(;b),F^{1;0}(a;)) + \mu^1(F^{1;1}(a;b)).\end{multline}
The unitality part of the claim is straightforward.
\end{proof}

\begin{example}
\label{eg:dgmulti}
Let $\EuC_1,\ldots,\EuC_n$ be $\mathsf{dg}$ categories, and $\EuC_1 \otimes \ldots \otimes \EuC_n$ their tensor product $\mathsf{dg}$ category.
Then there is an $A_\infty$ $n$-functor
\begin{align*} 
F: A_\infty(\EuC_1) \times \ldots \times A_\infty(\EuC_n) &\dashrightarrow A_\infty(\EuC_1 \otimes \ldots \otimes \EuC_n),\quad\text{with}\\ 
 F^{0;\ldots;1;\ldots;0}(;\ldots;c_i;\ldots;) &:= e_1 \otimes \ldots \otimes e_{i-1} \otimes c_i \otimes e_{i+1} \otimes \ldots \otimes e_n,
 \end{align*}
and all other $F^{*;\ldots;*}$ vanishing.
\end{example}

\begin{defn}
\label{defn:multifuncomp}
Suppose that we have $A_\infty$ multifunctors $F_i: \EuC^i_1 \times \ldots \times \EuC^i_{t_i} \dashrightarrow \EuD_i$ for $i=1,\ldots,m$, and $ G: \EuD_1 \times \ldots \times \EuD_m \dashrightarrow \EuE$.
We define the composition
\[ H := G\circ(F_1,\ldots,F_m): \EuC^1_1 \times \ldots \times \EuC^m_{t_m} \dashrightarrow \EuE.\]
It acts on objects in the obvious way, and on morphisms by analogy with composition of $A_\infty$ functors:
\begin{multline}
H^{s_{1,1},\ldots,s_{1,t_1},\ldots,s_{m,1},\ldots,s_{m,t_m}}(c^{1,1}_1,\ldots,c^{1,1}_{s_{1,1}};\ldots;c^{m,t_m}_1,\ldots,c^{m,t_m}_{s_{m,t_m}}) := \\
\sum (-1)^\maltese  G\left(F_1^*(c^{1,1}_1,\ldots;\ldots;c^{1,t_1}_1,\ldots),\ldots,F_1^*(\ldots;\ldots,c^{1,t_1}_{s_{1,t_1}});\ldots;F_m^*(\ldots),\ldots,F_m^*(\ldots;\ldots,c^{m,t_m}_{s_{m,t_m}})\right)
\end{multline}
The check that the maps $H^*$ satisfy the $A_\infty$ multifunctor equations is straightforward. 
It is also easy to check that composition is `associative' in the obvious sense.
\end{defn}

\begin{lem}
\label{lem:trifun}
Let $\EuC$ and $\EuD$ be $\BbK$-linear $A_\infty$ categories, and $[\EuC,\EuD]$ the $\mathsf{dg}$ category of $A_\infty$ $(\EuC,\EuD)$ bimodules. 
There is an $A_\infty$ tri-functor
\[ F: A_\infty([\EuC,\EuD]) \times \EuC \times \EuD^{op} \dashrightarrow A_\infty(\modules \BbK),\]
defined on the level of objects by
\[ F(\EuP,X,Y) := \left(\EuP(X,Y),\mu_\EuP^{0|1|0}\right),\]
and on the level of morphisms as follows:
\begin{itemize}
\item For $(c_1,\ldots,c_s;d_1,\ldots,d_t) \in \EuC(X_1,\ldots,X_s) \otimes \EuD^{op}(Y_1,\ldots,Y_t)$, we define
\[ F^{0;s;t}(;c_1,\ldots,c_s;d_1,\ldots,d_t) \in \Hom_{\modules \BbK}(\EuP(X_s,Y_t),\EuP(X_1,Y_1))\]
to be the morphism which sends 
\begin{align*} p &\mapsto (-1)^\dagger \mu_\EuP(c_1,\ldots,c_s;p;d_t,\ldots,d_1),\quad\text{ for any $\EuP$, where}\\
 \dagger &:= \sum_{j<k} |d_j|'\cdot|d_k|' + |p| \cdot \sum_{j=1}^t |d_j|'.
 \end{align*}
\item For $(\rho;c_1,\ldots,c_s;d_1,\ldots,d_t) \in A_\infty([\EuC,\EuD])(\EuP_1,\EuP_2) \otimes \EuC(X_1,\ldots,X_s) \otimes \EuD^{op}(Y_1,\ldots,Y_t)$, we define
\[ F^{1;s;t}(\rho;c_1,\ldots,c_s;d_1,\ldots,d_t) \in \Hom_{\modules \BbK}(\EuP_2(X_s,Y_t),\EuP_1(X_1,Y_1))\]
to be the morphism which sends
\begin{align*} 
p &\mapsto (-1)^\dagger \rho(c_1,\ldots,c_s,p,d_t,\ldots,d_1),\quad\text{where}\\
 \dagger &:= |\rho| +  \sum_{j<k} |d_j|'\cdot|d_k|' + |p| \cdot \sum_{j=1}^t |d_j|'.
 \end{align*}
\end{itemize}
$F$ is strictly unital in its first entry.
\end{lem}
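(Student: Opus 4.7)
My plan is to check the two axioms of Definition \ref{defn:multifun} separately: the $A_\infty$ $3$-functor relations, and the strict unitality in the first entry. Since $F^{s_1;s_2;s_3}$ is nonzero only when $s_1 \in \{0,1\}$, and since $A_\infty(\modules \BbK)$ carries only $\mu^1$ (the differential from \eqref{eqn:diffmod}) and $\mu^2$ (composition with the sign from Definition \ref{defn:ainfdg}), the multifunctor equations split cleanly into two families indexed by the number of $\EuP$-inputs.

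The first family, with zero $\rho$-inputs, reads: given $(c_1,\ldots,c_s;d_1,\ldots,d_t)$, the sum of $F^{0;\bullet;\bullet}$ applied after inserting $\mu^k_\EuC$ somewhere in the $c$'s or $\mu^k_{\EuD^{op}}$ somewhere in the $d$'s must equal $\mu^1_{\modules\BbK} F^{0;s;t}(;c_*;d_*) + \sum \mu^2_{\modules\BbK}(F^{0;s_1;t_1},F^{0;s_2;t_2})$. Unpacking: $\mu^1_{\modules\BbK}$ produces pre/post-composition with $\mu^{0|1|0}_\EuP$ by \eqref{eqn:diffmod}, while $\mu^2_{\modules\BbK}$-compositions produce $\mu_\EuP \circ \mu_\EuP$ nested terms. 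Rearranging the $d_j$'s into the $\EuD^{op}$ order via Definition \ref{defn:oppainf}, these are precisely the $A_\infty$ bimodule equations for $\EuP$, each term carrying the sign $\dagger = \sum_{j<k}|d_j|'|d_k|' + |p|\sum_j |d_j|'$ coming from (a) reversing the $d_j$'s past each other (the first summand) and (b) evaluating the resulting morphism on $p$, which must be commuted past the $d_j$'s to reach its original position in $\mu_\EuP$ (the second summand).

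The second family, with one $\rho$-input, similarly unwinds to the defining formula of the differential on $\hom_{[\EuC,\EuD]}(\EuP_1,\EuP_2)$, namely \cite[Eq.\ (2.8)]{Seidel2008c}: the term $\mu^1_{\modules\BbK} F^{1;s;t}(\rho;\ldots)$ gives pre- and post-composition with the differentials of $\EuP_1$ and $\EuP_2$; mixed $\mu^2_{\modules\BbK}$-compositions $F^{1;*;*}(\rho) \circ F^{0;*;*}$ and $F^{0;*;*}\circ F^{1;*;*}(\rho)$ give the $\mu_{\EuP_1}$- and $\mu_{\EuP_2}$-actions before and after $\rho$ on the strip of inputs; and on the LHS, inserting $\mu^k_\EuC$ or $\mu^k_{\EuD^{op}}$ inside $\rho$'s argument list produces the remaining terms. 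Again the signs match after accounting for the $\EuD^{op}$ reversal and the extra $|\rho|$ in $\dagger$, which comes from commuting $\rho$ past the inputs that end up in front of it in the formula (this is the only difference from the $F^{0;s;t}$ sign).

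Strict unitality in the first entry is essentially immediate: the identity pre-homomorphism $e^+ \in \hom^0_{[\EuP,\EuP]}$ has $e^{+,0|1|0}(p) = p$ and $e^{+,s|1|t} = 0$ whenever $s+t > 0$, so $F^{1;0;0}(e^+)$ is the identity in $\hom_{\modules\BbK}(\EuP(X,Y),\EuP(X,Y))$ and $F^{1;s;t}(e^+;c_*;d_*) = 0$ for $s+t>0$, matching Definition \ref{defn:multifun}.

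The main obstacle is sign bookkeeping: confirming that the sign $\dagger$ in each of $F^{0;s;t}$ and $F^{1;s;t}$ is exactly what is needed for the $A_\infty$ multifunctor equations to match the bimodule equations and the pre-homomorphism differential. The two contributions to $\dagger$ (reversal of the $d_j$'s and commuting $p$ past them) must be tracked through every insertion of $\mu^k_{\EuD^{op}}$, $\mu^1_{\modules\BbK}$, and $\mu^2_{\modules\BbK}$; the graphical calculus of Appendix \ref{app:signs} is the natural tool. Once signs are verified for one representative relation in each family, the general case follows by the same book-keeping.
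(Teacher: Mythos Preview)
Your approach is correct and is essentially what the paper does implicitly (the paper gives no written proof, only the sign diagrams in Figure~\ref{fig:bimodmultifun} and a blanket remark that such identities are routine via the graphical calculus of Appendix~\ref{app:signs}).  However, you have missed one family of relations.

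You say the multifunctor equations ``split cleanly into two families indexed by the number of $\EuP$-inputs,'' namely $s_1=0$ and $s_1=1$.  There is a third nontrivial family, $s_1=2$.  With two inputs $\rho_1,\rho_2$ from $A_\infty([\EuC,\EuD])$, the left-hand side of the multifunctor relation has exactly one surviving term, namely $F^{1;s;t}(\mu^2_{[\EuC,\EuD]}(\rho_1,\rho_2);c_*;d_*)$ (insertions of $\mu^1$ on a single $\rho_i$, or of $\mu^k_\EuC$, $\mu^k_{\EuD^{op}}$, land in $F^{2;\bullet;\bullet}=0$); the right-hand side has only the $\mu^2_{\modules\BbK}(F^{1;\bullet;\bullet}(\rho_1;\ldots),F^{1;\bullet;\bullet}(\rho_2;\ldots))$ terms.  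This identity is precisely the compatibility of $F$ with \emph{composition} of bimodule pre-homomorphisms, i.e.\ \cite[Equation~(2.9)]{Seidel2008c}, and it does not follow from the two families you wrote down.  For $s_1\ge 3$ both sides vanish for the reasons you already identified, so nothing further is needed.

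This is a small omission rather than a structural error: the verification for $s_1=2$ proceeds by exactly the same bookkeeping as your $s_1=1$ case (reverse the $d_j$'s, commute $p$ past them, track the extra $|\rho_1|+|\rho_2|$), and reduces to Seidel's composition formula on the nose.  Add this case and your argument is complete.
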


\begin{lem}
\label{lem:pullback}
Let $G: \EuC_1 \to \EuD_1$ and $H: \EuC_2 \to \EuD_2$ be $A_\infty$ functors. 
Then there is a $\mathsf{dg}$ functor
\[ (G \otimes H)^*: [\EuC_2,\EuD_2] \to [\EuC_1,\EuD_1].\]
It is given on the level of objects by defining $(G \otimes H)^* \EuP$ to be the $(\EuC_1,\EuD_1)$ bimodule with
\[ (G \otimes H)^*\EuP(X,Y) := \EuP(GX,HY),\]
\begin{multline}
\label{eqn:pullbackbimod}
 \mu^{s|1|t}_{(G \otimes H)^*\EuP}(c_1,\ldots,c_s,p,d_t,\ldots,d_1) := \\
 \sum \mu_\EuP^{*|1|*}(G(c_1,\ldots),\ldots,G(\ldots,c_s),p,H(d_t,\ldots),\ldots,H(\ldots,d_1)).
\end{multline}
It is given on the level of morphisms by mapping the bimodule pre-homomorphism $\rho$ to the bimodule pre-homomorphism $(G \otimes H)^*\rho$, given by the same formula \eqref{eqn:pullbackbimod}, but with `$\mu_\EuP$' replaced by `$\rho$'.
\end{lem}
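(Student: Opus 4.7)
The plan is to verify, in turn: (i) that the formula \eqref{eqn:pullbackbimod} defines an $A_\infty$ $(\EuC_1,\EuD_1)$ bimodule structure on $(G \otimes H)^*\EuP$; (ii) that the same formula with $\mu_\EuP$ replaced by $\rho$ defines a pre-homomorphism $(G \otimes H)^*\rho$ of bimodules whenever $\rho$ is one; (iii) that the resulting assignment $\rho \mapsto (G \otimes H)^*\rho$ is a chain map with respect to the pre-homomorphism differential of \cite[Equation (2.8)]{Seidel2008c}; and (iv) that it strictly respects composition \cite[Equation (2.9)]{Seidel2008c}. Items (i) and (iii) are the substantive parts; (ii) is a mild variant of (i), and (iv) is manifest from the formula.

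For (i), I would expand the $A_\infty$ bimodule equations for $(G \otimes H)^*\EuP$ using \eqref{eqn:pullbackbimod}. The resulting terms are of two types: those in which $\mu_\EuP^{*|1|*}$ appears composed with itself (all other inputs being of the form $G^*(\ldots)$ or $H^*(\ldots)$), and those in which $\mu_{\EuC_1}^*$ or $\mu_{\EuD_1}^*$ appears inside one of the functor images $G^*(\ldots)$ or $H^*(\ldots)$. The $A_\infty$ functor relations \eqref{eqn:Ainfun} for $G$ and for $H$ rewrite the latter type as sums of $\mu_{\EuD_1}$ (respectively $\mu_{\EuD_2}$) composed with $G^*$'s (respectively $H^*$'s). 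After this rewriting, the total expression is exactly the $A_\infty$ bimodule relation for $\EuP$, evaluated on the tuple $(G^*(\ldots),\ldots,G^*(\ldots),p,H^*(\ldots),\ldots,H^*(\ldots))$, which vanishes.

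For (ii) and (iii), the same mechanism applies: one simply replaces one inner occurrence of $\mu_\EuP^{*|1|*}$ by $\rho$ (respectively by $\mu_\EuP$ and $\rho$ alternately, when computing the differential), and verifies that the rewriting using the $A_\infty$ functor relations for $G$ and $H$ converts the pullback of the pre-homomorphism (respectively differential) equation for $\rho$ into the corresponding equation for $(G\otimes H)^*\rho$. For (iv), composition of pre-homomorphisms is defined by summing over the ways of inserting the output of one into the input positions of the other; since pullback along $G, H$ acts on the input slots only, it commutes with this insertion on the nose.

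The main obstacle is the sign bookkeeping, since the $A_\infty$ functor relations for $G$ and $H$ and the $A_\infty$ bimodule relations for $\EuP$ each carry Koszul-type signs that must be matched against the signs implicit in \eqref{eqn:pullbackbimod}. Using the graphical notation of Appendix \ref{app:signs}, each of these checks reduces to a routine diagrammatic comparison. Alternatively, one may observe that an $A_\infty$ bimodule is equivalent to an $A_\infty$ functor $\EuC \otimes \EuD^{op} \to \modules \BbK$ (extending the $\mathsf{dg}$ discussion preceding \eqref{eqn:wedgepdg}), so that $(G \otimes H)^*$ is nothing but precomposition with the product $A_\infty$ functor $G \otimes H^{op}$, and all of (i)--(iv) reduce to standard facts about composition of $A_\infty$ multifunctors in the sense of Definition \ref{defn:multifuncomp}.
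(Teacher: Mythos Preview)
The paper states this lemma without proof, in keeping with its general policy (see the Remark at the end of the Introduction) of omitting proofs of identities that become routine in the graphical notation of Appendix~\ref{app:signs}. Your proposal is correct and is precisely the kind of verification the paper has in mind; in particular, your closing remark---that the signs are most efficiently checked via the graphical calculus, or alternatively by interpreting pullback as precomposition with an $A_\infty$ product functor---matches the paper's intended justification for the omission.
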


\begin{lem}
\label{lem:pullbacktri}
Let $G: \EuC_1 \to \EuD_1$ and $H: \EuC_2 \to \EuD_2$ be $A_\infty$ functors, and denote by 
\[ F_i: A_\infty([\EuC_i,\EuD_i]) \times \EuC_i \times \EuD_i \dashrightarrow A_\infty(\modules \BbK)\]
the $A_\infty$ tri-functor introduced in Lemma \ref{lem:trifun}, for $i = 1,2$. 
Then we have an equality
\[ F_2 \circ (\mathsf{Id},G,H^{op}) = F_1 \circ (A_\infty((G \otimes H)^*),\mathsf{Id},\mathsf{Id})\]
 of $A_\infty$ tri-functors $A_\infty([\EuC_2,\EuD_2]) \times \EuC_1 \times \EuD_1^{op} \dashrightarrow A_\infty(\modules \BbK)$.
\end{lem}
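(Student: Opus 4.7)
The plan is to verify the asserted strict equality of $A_\infty$ tri-functors by unfolding the composition formula of Definition \ref{defn:multifuncomp} on both sides, using the explicit formulae of Lemmas \ref{lem:trifun} and \ref{lem:pullback}. Observe first that both tri-functors are strictly unital in the bimodule (first) slot: on the LHS this is immediate from the strict unitality of $F_2$, while on the RHS one uses that $A_\infty((G\otimes H)^*)$ has no higher-order terms, so the composition inherits strict unitality from $F_1$. Consequently the only components to check are those of arity $(1;s;t)$ for $s,t\ge 0$, plus the action on objects. The latter is trivial: $(G\otimes H)^*\EuP(X,Y) := \EuP(GX,HY)$ by Lemma \ref{lem:pullback}, and the differential $\mu^{0|1|0}_{(G\otimes H)^*\EuP}$ coincides with $\mu_{\EuP}^{0|1|0}$ because the pullback formula involves no functor inputs in that degree.

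For the morphism-level comparison, I would fix inputs $(\rho;\,c_1,\ldots,c_s;\,d_1,\ldots,d_t)$ and a vector $p$ in the appropriate hom-space, and expand each side in turn. On the RHS, since $A_\infty((G\otimes H)^*)$ has only a linear term, the composition is just $F_1^{1;s;t}\bigl((G\otimes H)^*\rho;\,c_1,\ldots,c_s;\,d_1,\ldots,d_t\bigr)(p)$; substituting the formulae from Lemma \ref{lem:trifun} and then Lemma \ref{lem:pullback} expresses it as a sum over partitions of the $c_i$'s into consecutive blocks fed to $G$ and of the $d_j$'s into consecutive blocks fed to $H$, with the $d$-blocks appearing in the reversed order inside $\rho$, and with the single sign $\dagger := |\rho| + \sum_{j<k}|d_j|'\cdot|d_k|' + |p|\cdot\sum_j|d_j|'$. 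On the LHS, unfolding $F_2\circ(\mathsf{Id},G,H^{op})$ via Definition \ref{defn:multifuncomp} partitions the $c_i$'s into blocks fed to $G^*$ and the $d_j$'s into blocks fed to $(H^{op})^*$; because $F_2$ is strictly unital in the first slot and $\rho$ is its single first-slot input, only the $F_2^{1;*;*}$-terms survive, producing an expression in which $\rho$ is evaluated on the $G$-composites and the $H^{op}$-composites of the distributed inputs.

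The core of the argument is then a sign comparison. Expanding the $H^{op}$-composites via Definition \ref{defn:oppainf} reverses the order of each $d$-block and introduces the sign $\sum_{i<j}|d_i|'\cdot|d_j|'$ internal to the block; reindexing the global Koszul sign $\maltese$ from Definition \ref{defn:multifuncomp} (which reorders $\mathsf{Id}$-, $G$-, and $H^{op}$-outputs) so that it takes the form appropriate for Lemma \ref{lem:trifun} then produces precisely the sign $\dagger$ above, once combined with the sign prefactor of $F_2^{1;*;*}$. Both sides therefore reduce term-by-term to the same sum of expressions of the form $\rho\bigl(G(\ldots),\ldots,G(\ldots),\,p,\,H(\ldots),\ldots,H(\ldots)\bigr)$ with $H$-blocks in reversed order, proving the equality.

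I expect the main (and essentially only) obstacle to be the bookkeeping in this last sign-matching step: the two reversals of the $d$-inputs — one coming from the opposite-category sign convention for $H^{op}$, the other built into the formulae of Lemmas \ref{lem:trifun} and \ref{lem:pullback} — have to be reconciled through the global Koszul sign of the multifunctor composition. This is a mechanical check that is most efficiently carried out using the graphical sign notation of Appendix \ref{app:signs}, and I would invoke that formalism rather than writing out each quadratic form by hand.
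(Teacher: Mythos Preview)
Your approach is correct and is essentially what the paper has in mind: this lemma is one of the results whose proof the paper explicitly omits as ``trivial using the graphical notation'' of Appendix \ref{app:signs}, so there is no written proof to compare against beyond the implicit ``unfold and check signs''.

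One small correction: strict unitality in the first slot does \emph{not} by itself reduce the check to components of arity $(1;s;t)$. Strict unitality constrains what happens when $e^+$ is inserted; it says nothing about the $(0;s;t)$ components, which are genuinely present in both $F_1$ and $F_2$ (see the two bullets of Lemma \ref{lem:trifun}). What actually makes the comparison tractable is that on the LHS $\mathsf{Id}$ has only a linear term, and on the RHS $A_\infty((G\otimes H)^*)$ has only a linear term, so both composite tri-functors have only $(0;s;t)$ and $(1;s;t)$ components. You therefore need to check the $(0;s;t)$ case as well --- but this is formally identical to your $(1;s;t)$ argument with $\rho$ replaced by the structure maps $\mu_{\EuP}$ (compare the two displayed formulae in Lemmas \ref{lem:trifun} and \ref{lem:pullback}), so your sign bookkeeping carries over verbatim.
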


\begin{lem}
\label{lem:multifunhoch}
Suppose that $\EuC_1,\ldots,\EuC_n$ are $A_\infty$ categories, $\EuD = A_\infty(\EuD')$ is the $A_\infty$ category corresponding to a $\mathsf{dg}$ category $\EuD'$, and $F: \EuC_1 \times \ldots \times \EuC_n \dashrightarrow \EuD$ is an $A_\infty$ $n$-functor. 
Then there is an induced chain map
\[ F_*: CC_\bullet(\EuC_1) \otimes \ldots \otimes CC_\bullet(\EuC_n) \to CC_\bullet(\EuD),\]
\begin{multline}
F_*(c_0^1[c_1^1|\ldots|c_{s_1}^1],\ldots,c_0^n[c_1^n|\ldots|c_{s_n}^n]) := \\
\sum (-1)^{\maltese+\dagger} \mu^2_\EuD\left(\ldots \left(\mu^2_\EuD \left(F^*\overbrace{(\ldots)}^1, F^*\overbrace{(\ldots)}^2 \right),\ldots\right),F^*\overbrace{(\ldots)}^n \right)
 \left[F^*(\ldots),\ldots,F^*(\ldots)\right].
\end{multline}
To clarify the notation: the first term is obtained by taking $n$ terms, and combining them with $n-1$ applications of $\mu^2_\EuD$ into a single term. 
The overbraces signify that we sum over all cyclic permutations of the inputs $c^i_j$ such that $c^i_0$ lands underneath the overbrace labelled $i$.
As usual, $\maltese$ is the Koszul sign associated to re-ordering the inputs $c^p_q$: this includes the Koszul signs associated with the cyclic re-ordering associated with the overbrace notation, exactly as in Section \ref{subsec:hochhom}.
The other contribution to the overall sign is
\[ \dagger := \frac{n(n-1)}{2} + \sum_{1 \le j \le n, 1 \le k \le s_j} (n-j) |c^j_k|'. \]
\end{lem}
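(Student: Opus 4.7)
The plan is to verify directly that $F_* \circ b = b \circ F_*$, with each side understood as the natural differential on the appropriate Hochschild complex: on the left, $b$ acts as $\sum_i (-1)^{\maltese_i} \mathrm{Id} \otimes \cdots \otimes b \otimes \cdots \otimes \mathrm{Id}$ with the Koszul signs dictated by moving past the earlier tensor factors; on the right, $b$ is the Hochschild differential on $CC_\bullet(\EuD)$ as defined in \eqref{eqn:ccdiffb}. The crucial input is the $A_\infty$ $n$-functor relations for $F$, simplified using the hypothesis $\mu^{\ge 3}_\EuD = 0$: they reduce to
\[ \sum_{i,j,k} (-1)^\dagger F^{\ldots}(\ldots, \mu^k_{\EuC_i}(c^i_{j+1},\ldots),\ldots) = \mu^1_\EuD\bigl(F^*(\ldots)\bigr) + \sum (-1)^\maltese \mu^2_\EuD\bigl(F^*(\ldots),F^*(\ldots)\bigr), \]
so that whenever an internal $\mu_{\EuC_i}$ appears inside one $F^*$, it may be replaced (up to sign) either by a differential applied to a single $F^*$ output or by $\mu^2_\EuD$ applied to two $F^*$ outputs, one of whose tensor factors has been split.

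First I would expand $F_*\circ b$: for each input $\alpha_i = c_0^i[c_1^i|\ldots|c_{s_i}^i]$, the summand $b(\alpha_i)$ inserts some $\mu^k_{\EuC_i}$ either entirely among the bracketed entries of $\alpha_i$ or wrapping across the distinguished position $c_0^i$. After applying $F_*$, one obtains terms in which a single factor $F^*(\ldots)$ has an internal $\mu_{\EuC_i}$. Applying the simplified multifunctor relation to each such factor converts these into two kinds of outputs: (a) $\mu^1_\EuD$ applied to one $F^*$ expression, and (b) $\mu^2_\EuD$ applied to two consecutive $F^*$ expressions obtained by splitting the input sequence. Next I would expand $b \circ F_*$: the differential of $F_*(\alpha_1,\ldots,\alpha_n)$ either applies $\mu^1_\EuD$ to one of the factors $F^*(\ldots)$ appearing inside the bracket or at the iterated $\mu^2_\EuD$ head, or applies $\mu^2_\EuD$ to two consecutive $F^*$ entries --- either two bracket entries, a bracket entry and the rightmost head factor (possibly wrapping around), or a pair produced by splitting the iterated $\mu^2_\EuD$ tree; the associativity of $\mu^2_\EuD$ (which is strict because $\mu^{\ge 3}_\EuD = 0$) lets us move the parentheses freely. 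The terms of type (a) and (b) above match precisely these terms of $b \circ F_*$, once one accounts for how splitting a single $F^*$ into two pieces corresponds both to splitting within the bracket and to splitting the head tree.

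Once this correspondence is established, the proof reduces to a sign verification. The main obstacle is exactly this sign bookkeeping: there are four separate sources of signs that must be simultaneously tracked --- the Koszul signs $\maltese$ arising from the permutation of reduced-degree inputs demanded by Definition \ref{defn:multifun} and Definition \ref{defn:multifuncomp}, the cyclic permutation signs from the overbrace notation of Section \ref{subsec:hochhom}, the dedicated sign $\dagger = \tfrac{n(n-1)}{2} + \sum (n-j)|c^j_k|'$ in the definition of $F_*$ (which encodes the left-associated iterated $\mu^2_\EuD$), and the signs $(-1)^{\varepsilon_j}$ appearing in the definition \eqref{eqn:ccdiffb} of $b$. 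The graphical notation of Appendix \ref{app:signs} is the right tool here: one draws the trees for each summand on both sides, reads off the Koszul sign from the diagram, and checks that the sign assignments agree. The payoff of choosing $\dagger$ as stated is precisely to make this matching work term by term, and in particular to make the wraparound terms (in which the distinguished position $c^i_0$ moves past its neighbours via cyclic rotation) carry the same signs on both sides.

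A conceptually cleaner alternative would be to construct an $A_\infty$ tensor product $\EuC_1 \otimes_{A_\infty} \cdots \otimes_{A_\infty} \EuC_n$ representing the functor of $A_\infty$ $n$-functors, together with a K\"unneth quasi-isomorphism $CC_\bullet(\EuC_1) \otimes \cdots \otimes CC_\bullet(\EuC_n) \to CC_\bullet(\EuC_1 \otimes_{A_\infty} \cdots \otimes_{A_\infty} \EuC_n)$, after which $F_*$ would be the composition of this K\"unneth map with the chain map induced by Lemma \ref{lem:FHH} for the genuine $A_\infty$ functor corresponding to $F$. I would avoid this route in the interest of keeping the argument self-contained and compatible with the uniform sign conventions maintained throughout the paper.
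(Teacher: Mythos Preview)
Your approach is correct and is exactly what the paper intends: the paper omits the proof entirely, relying on the graphical sign calculus of Appendix~\ref{app:signs} to reduce the verification $F_* \circ b = b \circ F_*$ to a routine (if tedious) diagram check, precisely along the lines you outline. Your identification of the simplified $n$-functor relation under $\mu^{\ge 3}_\EuD = 0$, the term-matching between the two sides, and the role of the sign $\dagger$ are all on target.
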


\begin{lem}
\label{lem:compatcomp}
The maps induced by Lemma \ref{lem:multifunhoch} are compatible with composition of $A_\infty$ multifunctors, i.e., in the setting of Definition \ref{defn:multifuncomp}, we have $H_* = G_* \circ ((F_1)_* \otimes \ldots \otimes (F_n)_*)$.
\end{lem}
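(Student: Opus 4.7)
The plan is to verify the claimed equality by direct expansion of both sides and a combinatorial matching of terms. On the right-hand side, apply Lemma \ref{lem:multifunhoch} twice. First, for each $i$, $(F_i)_*$ sends a tuple of Hochschild chains on $\EuC^i_1, \ldots, \EuC^i_{t_i}$ to a Hochschild chain on $\EuD_i$ whose ``position zero'' entry is a $\mu^2_{\EuD_i}$-product of $t_i$ applications of $F_i^*$, one containing each $c^{i,j}_0$ via the corresponding overbrace, and whose bar positions are further $F_i^*$-applications. Second, $G_*$ assembles the resulting $m$ chains into a chain on $\EuE$ whose position-zero entry is a $\mu^2_\EuE$-product of $m$ applications of $G^*$, and whose bar positions are further $G^*$-applications, all of them now decorated by the already-expanded $F_i^*$-applications.

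On the left-hand side, by Definition \ref{defn:multifuncomp} every application of $H^*$ inside the Lemma \ref{lem:multifunhoch} formula for $H_*$ is itself a sum of $G^*$ applied to tuples of $F_i^*$-applications. Substituting produces a doubly-indexed sum over: a cyclic permutation of the Hochschild chain on each $\EuC^i_j$ placing $c^{i,j}_0$ in an appropriate slot; a partition of the cyclically-reordered entries into successive $F_i^*$-applications within each source-block $i$; and a grouping of the resulting $F_i^*$-applications into $G^*$-applications covering all $m$ source-blocks. This is exactly the same combinatorial data parametrizing the right-hand side expansion, so the two sides are identified term-by-term, modulo signs.

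The main obstacle is sign bookkeeping: the overall $\maltese$ Koszul signs and the explicit $\dagger$ corrections must be shown to agree. On both sides the $\maltese$ signs arise from the same overall Koszul re-ordering of the full list of inputs $c^{i,j}_k$, so they coincide automatically. The $\dagger$ corrections on each side collect (a) one $\tfrac{\ell(\ell-1)}{2}$-type contribution per application of Lemma \ref{lem:multifunhoch} from chaining $\ell-1$ copies of $\mu^2$, and (b) position-weighted grading contributions $\sum (\ell-j)|c^\bullet|'$ coming from moving the $\mu^2$-products past the reduced degrees. On the right-hand side these accumulate over the two stages $(F_i)_*$ and $G_*$; on the left-hand side they all arise at once from the single application of Lemma \ref{lem:multifunhoch} to $H_*$. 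Because $\mu^{\geq 3}_{\EuD_i}=0$ and $\mu^{\geq 3}_\EuE=0$, the nested $\mu^2$-products are strictly associative, and the contributions reorganize into the same total sign. The cleanest verification uses the graphical notation of Appendix \ref{app:signs} to turn the $\mu^2$-chains into associative products and read off the Koszul sign of the global permutation in one step, avoiding any manual arithmetic on the $\tfrac{\ell(\ell-1)}{2}$ terms.
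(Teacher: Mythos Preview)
Your term-by-term matching is incorrect. The two sides do \emph{not} have the same shape before any relations are applied. Consider already the simplest nontrivial case $m=1$, $t_1=2$: a bi-functor $F\colon \EuC_1\times\EuC_2\dashrightarrow\EuD$ followed by an ordinary $A_\infty$ functor $G\colon\EuD\dashrightarrow\EuE$, applied to length-zero chains $a_0$ and $b_0$. Then $F_*(a_0,b_0)$ is (up to sign) the length-zero chain $\mu^2_\EuD(F(a_0;),F(;b_0))$, and $G_*\circ F_*$ gives $G^1(\mu^2_\EuD(F(a_0;),F(;b_0)))$. On the other side, $H_*(a_0,b_0)$ is $\mu^2_\EuE(H(a_0;),H(;b_0)) = \mu^2_\EuE(G^1F(a_0;),G^1F(;b_0))$. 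These differ by the $A_\infty$ functor relation for $G$, which involves the extra terms $\mu^1_\EuE G^2(-,-)$ and $G^2(\mu^1_\EuD-,-)$, $G^2(-,\mu^1_\EuD-)$; it is \emph{not} an identity between the two $\mu^2$-expressions alone.

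More generally, on the left the position-zero entry is a $(\sum_i t_i-1)$-fold $\mu^2_\EuE$-iterate of $H^*$'s, each of which expands to $G^*$ applied only to individual $F_i^*$'s. On the right the position-zero entry is an $(m-1)$-fold $\mu^2_\EuE$-iterate of $G^*$'s, some of whose $\EuD_i$-arguments are $(t_i-1)$-fold $\mu^2_{\EuD_i}$-iterates of $F_i^*$'s. These are structurally different expressions: $G^*$ is fed $\mu^2_{\EuD_i}$-products on the right but never on the left. Your remark that ``the nested $\mu^2$-products are strictly associative'' does not bridge this gap, since associativity says nothing about how $G^*$ interacts with $\mu^2_{\EuD_i}$ in its inputs. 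What is actually needed is the $A_\infty$ multifunctor relation for $G$, which trades $G^*(\ldots,\mu^2_{\EuD_i}(\cdot,\cdot),\ldots)$ for $\mu^2_\EuE(G^*(\ldots),G^*(\ldots))$ at the cost of $\mu^1$-terms; one must then verify that those $\mu^1$-contributions cancel against the remaining terms in the two expansions (or else exhibit an explicit chain homotopy). That is the missing idea.
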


\begin{lem}
\label{lem:multifunsh}
In the situation of Example \ref{eg:dgmulti}, the diagram
\[ \xymatrix{ CC_\bullet(A_\infty(\EuC_1)) \otimes \ldots \otimes CC_\bullet(A_\infty(\EuC_n)) \ar[r]^-{=} \ar[d]^{F_*} & C_\bullet(\EuC_1) \otimes \ldots \otimes C_\bullet(\EuC_n)\ar[d]^{\mathsf{sh}} \\
CC_\bullet(A_\infty(\EuC_1 \otimes \ldots \otimes \EuC_n)) \ar[r]^-{=} &  C_\bullet(\EuC_1 \otimes \ldots \otimes \EuC_n)}\]
commutes. 
Here, $F_*$ is the map induced by the $A_\infty$ $n$-functor $F$ introduced in Example \ref{eg:dgmulti}, in accordance with Lemma \ref{lem:multifunhoch}. 
The other vertical map `$\mathsf{sh}$' is the natural generalization of the K\"{u}nneth quasi-isomorphism \eqref{eqn:kunndg}.
\end{lem}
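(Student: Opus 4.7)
The strategy is to unfold both sides of the diagram and check they coincide on the nose. First I would exploit the fact that the multifunctor $F$ of Example \ref{eg:dgmulti} has a single nonzero Taylor component, namely $F^{0;\ldots;1;\ldots;0}$, which sends one morphism $c$ sitting in the $i$-th slot to the pure tensor $e_1 \otimes \cdots \otimes c \otimes \cdots \otimes e_n$. Consequently, in every instance of $F^*(\ldots)$ appearing in the combinatorial sum defining $F_*$ in Lemma \ref{lem:multifunhoch}, the argument must consist of a single element drawn from exactly one of the factor complexes $CC_\bullet(A_\infty(\EuC_i))$; every other term in the sum vanishes outright. This radical simplification is the key input.

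Next I would analyze the ``head'' of the formula, i.e., the iterated $\mu^2_\EuD$-composition on the first line. Because each $F^*$ now carries a single input, the overbrace convention forces the distinguished element $c_0^i$ of the $i$-th complex to appear as the single input under the $i$-th overbrace. Using $\mu^2_\EuD(f,g) = (-1)^{|f|} f \cdot g$ (Definition \ref{defn:ainfdg}), together with the fact that all tensor factors other than the relevant one hold the strict unit $e_j$, the nested product collapses to $c_0^1 \otimes c_0^2 \otimes \cdots \otimes c_0^n$, up to explicit Koszul signs. Simultaneously, the ``tail'' $[F^*(\ldots)|\ldots|F^*(\ldots)]$ consists of the remaining inputs $c_k^j$ with $k \geq 1$, each sent by $F^1$ to a pure tensor with identities in all other slots. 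Summing over all orderings that preserve the internal order within each complex (a constraint enforced by the overbrace/cyclic-permutation formalism) produces precisely the set of $(s_1,\ldots,s_n)$-shuffles of the inputs $c^i_k$. This is the defining property of the K\"unneth map $\mathsf{sh}$, so the two sides of the diagram agree up to signs.

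The main obstacle is verifying that the signs match exactly. The coefficient $(-1)^{\maltese + \dagger}$ of Lemma \ref{lem:multifunhoch}, with $\dagger = \tfrac{n(n-1)}{2} + \sum_{j,k}(n-j)|c_k^j|'$, is tailored to match the K\"unneth/shuffle signs in three separable pieces: I expect the $\tfrac{n(n-1)}{2}$ term to absorb the $n-1$ signs $(-1)^{|f|}$ produced by the iterated $\mu^2_\EuD$ on the head (where each intermediate factor is odd because of the $[1]$-shift on pre-$A_\infty$ inputs); the $\sum(n-j)|c_k^j|'$ term to compensate for transporting each $c_k^j$ past the strict units $e_{j+1},\ldots,e_n$ occupying tensor slots to its right when interpreting $F^1$-values as pure tensors; and the Koszul reordering factor $\maltese$ to match the braiding signs that arise in $\mathsf{sh}$ when inputs of distinct complexes are interleaved. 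A careful term-by-term tally then completes the proof. Because the signs are exactly the subtlety the formula in Lemma \ref{lem:multifunhoch} was engineered to control, I expect this step to be routine once set up, and the statement to hold as a strict (not merely homotopic) equality.
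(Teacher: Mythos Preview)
Your approach is correct and is essentially the only reasonable one: unfold the formula of Lemma \ref{lem:multifunhoch} in the special case where $F$ has only the components $F^{0;\ldots;1;\ldots;0}$, observe that each $F^*$ in the sum therefore takes a single input, so that the head collapses to the pure tensor $c_0^1\otimes\cdots\otimes c_0^n$ and the tail enumerates exactly the $(s_1,\ldots,s_n)$-shuffles, and then match signs. The paper does not give a proof of this lemma; it is one of the verifications relegated to the graphical sign calculus of Appendix \ref{app:signs} (see the remark at the end of the Introduction about omitting proofs that become trivial in that notation). So there is nothing to compare your argument against beyond the fact that both you and the paper regard the statement as a direct computation.

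One small caution on your sign heuristics: your explanation of how the three pieces of $(-1)^{\maltese+\dagger}$ distribute is not quite accurate as stated. The iterated $\mu^2_\EuD$ on the head contributes $(-1)^{\sum_{j=1}^{n-1}(|c_0^1|+\cdots+|c_0^j|)}=(-1)^{\sum_j (n-j)|c_0^j|}$, and writing $|c_0^j|=|c_0^j|'+1$ splits this into $\tfrac{n(n-1)}{2}$ plus $\sum_j(n-j)|c_0^j|'$; the latter is \emph{not} accounted for by ``transporting past units'' (the units $e_j$ have degree $0$), but rather combines with Koszul signs arising from the tensor-product composition in $\EuC_1\otimes\cdots\otimes\EuC_n$ and with the $\maltese$ reordering. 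The $\sum_{j,k\ge 1}(n-j)|c_k^j|'$ portion of $\dagger$ similarly interacts with the Koszul signs in the shuffle map rather than with unit-crossings. None of this affects the validity of your strategy---the tally does work out---but if you carry it through you should track the contributions differently than you sketched.
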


\subsection{The Mukai pairing for $A_\infty$ bimodules}

\begin{defn}
\label{defn:chernchar}
If $X$ is an object of a cohomologically unital $A_\infty$ category $\EuC$, then the cohomological unit $e_X \in hom^\bullet(X,X)$ defines a Hochschild cycle; 
we call the corresponding class in Hochschild homology the \emph{Chern character of $X$}, and denote it $\mathsf{Ch}(X) \in \HH_0(\EuC)$.
\end{defn}

\begin{lem}
\label{lem:chernquasi}
If $X$ and $Y$ are quasi-isomorphic objects of $\EuC$, then $\mathsf{Ch}(X) = \mathsf{Ch}(Y)$. 
\end{lem}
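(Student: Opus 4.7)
The plan is to exhibit an explicit length-$1$ Hochschild chain whose boundary equals $e_X - e_Y$ modulo Hochschild boundaries of length-$0$ chains. Since $X$ and $Y$ are quasi-isomorphic and $\EuC$ is cohomologically unital, we may pick cocycles $f \in hom^0(X,Y)$ and $g \in hom^0(Y,X)$ (i.e.\ $\mu^1(f) = \mu^1(g) = 0$) whose cohomology classes are mutually inverse. Fixing cocycle representatives $e_X$ and $e_Y$ for the cohomological units, this means there exist $h_X \in hom^{-1}(X,X)$ and $h_Y \in hom^{-1}(Y,Y)$ with
\[ \mu^2(g,f) = \pm e_X + \mu^1(h_X), \qquad \mu^2(f,g) = \pm e_Y + \mu^1(h_Y),\]
the signs being determined by the sign convention $[a]\cdot [b] = (-1)^{|b|}\mu^2(b,a)$ of Section~\ref{s:ainf} (and the fact that $|f| = |g| = 0$).

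Next, I would compute $b(g[f])$ for the length-$1$ chain $g[f] \in CC_\bullet(\EuC)$ (with $X_0 = Y$, $X_1 = X$) using the formula \eqref{eqn:ccdiffb}. The terms involving $\mu^{\ge 3}$ do not arise since only two inputs are available; the terms involving $\mu^1$ acting on $f$ or $g$ individually vanish because $f$ and $g$ are cocycles; and the two $\mu^2$-contributions correspond to the two cyclic rotations that place $a_0$ under the overbrace in $\mu^2(\overbrace{g,f})$, producing $\mu^2(g,f)$ as a length-$0$ chain based at $X$, and a cyclically twisted version $\mu^2(f,g)$ as a length-$0$ chain based at $Y$. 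Substituting the relations above and observing that $b(h_X) = \mu^1(h_X)$ and $b(h_Y) = \mu^1(h_Y)$ as length-$0$ chains, we get $b\bigl(g[f] \mp h_X \pm h_Y\bigr) = e_X - e_Y$.

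The main obstacle is the careful sign tracking needed to ensure that the two length-$0$ contributions to $b(g[f])$ combine as $e_X - e_Y$ rather than $e_X + e_Y$. The crucial input is the sign $(-1)^{|a_s|'\cdot \varepsilon_{s-1}}$ in the cyclic operator $t$ of Section~\ref{subsec:hochhom}; for $|f|' = |g|' = 1 \pmod 2$ this evaluates to $-1$, which is precisely the sign needed in conjunction with the $(-1)^{\varepsilon_j}$ appearing in the second sum of \eqref{eqn:ccdiffb} to make the difference (rather than the sum) appear. Once this is verified, one concludes $[e_X] = [e_Y]$ in $\HH_0(\EuC)$, i.e.\ $\mathsf{Ch}(X) = \mathsf{Ch}(Y)$.
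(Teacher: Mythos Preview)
Your approach is essentially identical to the paper's: exhibit a length-$1$ chain (the paper uses $f[g]$, you use $g[f]$) whose Hochschild boundary is $e_X - e_Y$ up to boundaries of the length-$0$ correction terms $h_X,h_Y$. Your sign analysis in the final paragraph is correct. One slip to fix: you have the identification of $\mu^2(g,f)$ and $\mu^2(f,g)$ with $e_X$ and $e_Y$ backwards --- in the paper's conventions $\mu^2(a_1,a_2)$ takes $a_1 \in hom(X_0,X_1)$ and $a_2 \in hom(X_1,X_2)$, so $\mu^2(g,f) \in hom(Y,Y)$ is cohomologous to $e_Y$, and $\mu^2(f,g) \in hom(X,X)$ to $e_X$ (this is not a sign ambiguity but a question of which $hom$-space the element lives in).
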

\begin{proof}
Let $[f] \in \Hom^0(X,Y)$ and $[g] \in \Hom^0(Y,X)$ be inverse isomorphisms. Then
\[ \mu^2(f,g) = e_X + \mu^1(h_X),\,\,\, \mu^2(g,f) = e_Y + \mu^1(h_Y),\]
so
\[ b(f[g] - h_X + h_Y) = e_X - e_Y,\]
so the classes $[e_X]$ and $[e_Y]$ are cohomologous.
\end{proof}

\begin{defn}
\label{defn:wedgepainf}
Let $\EuC$ and $\EuD$ be $A_\infty$ categories, and denote by
\[ F_*: \HH_\bullet(A_\infty([\EuC,\EuD])) \otimes \HH_\bullet(\EuC) \otimes \HH_\bullet(\EuD^{op}) \to \HH_\bullet(\modules \BbK)\]
the map induced by the $A_\infty$ tri-functor $F$ introduced in Lemma \ref{lem:trifun}, in accordance with Lemma \ref{lem:multifunhoch}.  
We define the pairing
\begin{align*} \wedge_\EuP: \HH_\bullet(\EuC) \otimes \HH_\bullet(\EuD^{op}) &\to \HH_\bullet(\modules \BbK) \\
 \wedge_\EuP(\alpha,\beta) &:= F_*(\mathsf{Ch}(\EuP),\alpha,\beta).
 \end{align*}
\end{defn}

Definition \ref{defn:wedgepainf} is compatible with the corresponding notion in the $\mathsf{dg}$ world \eqref{eqn:wedgepdg}. 
To see how, we must first say how to turn a $\mathsf{dg}$ bimodule into an $A_\infty$ bimodule:

\begin{defn}
Let $\EuC$ and $\EuD$ be $\BbK$-linear $\mathsf{dg}$ categories, and $\EuP$ a $\mathsf{dg}$ $(\EuC,\EuD)$ bimodule. 
We define an $(A_\infty(\EuC),A_\infty(\EuD))$ bimodule $A_\infty(\EuP)$ with $A_\infty(\EuP)(X,Y) := \EuP(Y,X)$,
\[\mu^{0|1|0}  :=  d_\EuP;\qquad 
\mu^{1|1|0}(c,p)  :=  c \cdot p;\qquad
\mu^{0|1|1}(p,d) :=  (-1)^{|p|'} p \cdot d;\qquad
\mu^{s|1|t} := 0 \mbox{ for all $s+t \ge 2$.}  \]
\end{defn}

\begin{rmk}
\label{rmk:diags}
If $\EuP$ is the diagonal $(\EuC,\EuC)$ bimodule, then $A_\infty(\EuP)$ is tautologically isomorphic to the diagonal $(A_\infty(\EuC),A_\infty(\EuC))$ bimodule (as defined in \cite[Equation (2.20)]{Seidel2008c}).
\end{rmk}

\begin{lem}
\label{lem:dgainfcompat}
If $\EuC$ and $\EuD$ are $\mathsf{dg}$ categories, and $\EuP$ is a $\mathsf{dg}$ $(\EuC,\EuD)$ bimodule, then the diagram
\[
\xymatrix{ \HH_\bullet(\EuC) \otimes \HH_\bullet(\EuD^{op}) \ar[r] \ar[d]^{\wedge_\EuP} & \HH_\bullet(A_\infty(\EuC)) \otimes \HH_\bullet(A_\infty(\EuD)^{op}) \ar[d]^{\wedge_{A_\infty(\EuP)}}\\
\HH_\bullet(\modules \BbK) \ar[r] & \HH_\bullet(A_\infty(\modules \BbK))}
\]
commutes.
Here, the top arrow is the tautological isomorphism $\HH_\bullet(\EuC) \cong \HH_\bullet(A_\infty(\EuC))$, tensored with the isomorphism $\HH_\bullet(\EuD^{op}) \cong \HH_\bullet(A_\infty(\EuD)^{op})$ induced by the isomorphism of Remark \ref{rmk:minusop}.
\end{lem}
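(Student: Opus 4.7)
The plan is to factor both composites in the diagram through a single auxiliary $A_\infty$ bifunctor built from $\EuP$. Regard the $\mathsf{dg}$ bimodule $\EuP$ as a $\mathsf{dg}$ functor $\widetilde{\EuP}: \EuC \otimes \EuD^{op} \to \modules \BbK$. Let $T: A_\infty(\EuC) \times A_\infty(\EuD^{op}) \dashrightarrow A_\infty(\EuC \otimes \EuD^{op})$ be the tensoring bifunctor of Example \ref{eg:dgmulti} and let $\iota: A_\infty(\EuD)^{op} \to A_\infty(\EuD^{op})$ be the strict isomorphism \eqref{eqn:difopps}. Set
\[ G := A_\infty(\widetilde{\EuP}) \circ T \circ (\mathrm{Id} \times \iota): A_\infty(\EuC) \times A_\infty(\EuD)^{op} \dashrightarrow A_\infty(\modules \BbK). \]

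First I would check that, under the tautological identifications of Remark \ref{rmk:tautid}, the induced chain map $G_*$ coincides with Shklyarov's pairing $\wedge_\EuP$ of Definition \ref{defn:dgmukai}. By Lemma \ref{lem:compatcomp}, $G_* = A_\infty(\widetilde{\EuP})_* \circ T_* \circ (\mathrm{Id} \otimes \iota_*)$. Lemma \ref{lem:multifunsh} identifies $T_*$ with the K\"unneth shuffle map \eqref{eqn:kunndg}, the map $A_\infty(\widetilde{\EuP})_*$ is the standard functoriality map used in the $\mathsf{dg}$ construction, and the sign discrepancy introduced by $\iota_*$ is controlled by Remark \ref{rmk:veesign}. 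So $G_*$ recovers $\wedge_\EuP$ after the identifications.

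Next I would show that $G_*(\alpha, \beta) = F_*(\mathsf{Ch}(A_\infty(\EuP)), \alpha, \beta) = \wedge_{A_\infty(\EuP)}(\alpha, \beta)$. The Chern character is represented by the length-zero chain $e^+_\EuP \in CC_0(A_\infty([\EuC,\EuD]))$. In the formula of Lemma \ref{lem:multifunhoch} applied to $F_*$ with this length-zero first input, only terms with exactly one $F^{1;s;t}$-application feeding off the first factor survive; the strict unitality of $F$ in its first slot (the final clause of Lemma \ref{lem:trifun}) then forces that application to be $F^{1;0;0}(e^+_\EuP;;) = e^+$. Every other application has the form $F^{0;s;t}$, which by Lemma \ref{lem:trifun} invokes $\mu^{s|1|t}_{A_\infty(\EuP)}$; since $\EuP$ is a $\mathsf{dg}$ bimodule, this vanishes for $s+t \ge 2$, so each remaining factor is either $F^{0;1;0}(;c_i;) = L(c_i)$ or $F^{0;0;1}(;;d_j) = \pm R(d_j)$. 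Unwinding $A_\infty(\widetilde{\EuP}) \circ T$ produces exactly the same strings of $L(c_i)$ and $R(d_j)$ in all shuffled orders, matching Shklyarov's formula \eqref{eqn:Shkform}.

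The main obstacle is the bookkeeping of Koszul signs, which have three distinct sources: the factor $(-1)^{n(n-1)/2}$ and the reduced-degree shifts appearing in Lemma \ref{lem:multifunhoch}; the signs built into Definition \ref{defn:ainfdg}, Example \ref{eg:dgmulti} and the isomorphism $\iota$; and the prefactor $(-1)^{|b_0|(|a_1|'+\cdots+|a_s|')}$ in \eqref{eqn:Shkform}. I would organise the comparison by writing both $G_*(\alpha,\beta)$ and $F_*(e^+_\EuP,\alpha,\beta)$ as sums over $(s,t)$-shuffles of the symbols $L(a_i),R(b_j)$ (with $L(a_0), R(b_0)$ and $e^+$ assembled into the zeroth position by iterated $\mu^2$), then match signs shuffle-by-shuffle using the graphical calculus of Appendix \ref{app:signs}. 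Once this is checked, commutativity of the diagram follows by combining the two steps.
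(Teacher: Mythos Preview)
Your approach is correct and amounts to the same chain-level comparison the paper performs, just with extra scaffolding: the paper's entire proof is the single sentence ``the diagram commutes on the level of cochain complexes: this follows by comparing Shklyarov's formula \eqref{eqn:Shkform} with our own definition.'' Your auxiliary bifunctor $G$ and the invocation of Lemmas \ref{lem:compatcomp} and \ref{lem:multifunsh} are not wrong, but they are not needed either --- one can simply expand $F_*(\mathsf{Ch}(A_\infty(\EuP)),\alpha,\beta)$ directly from Lemmas \ref{lem:trifun} and \ref{lem:multifunhoch} and match it against \eqref{eqn:Shkform}. One small correction: the Chern character of Definition \ref{defn:chernchar} is represented by a \emph{cohomological} unit, here the actual identity morphism $e_{\EuP}=\mathrm{id}_\EuP$ in the strictly unital dg category $[\EuC,\EuD]$, not by the formally adjoined $e^+$; the vanishing $F^{1;s;t}(e_\EuP;\ldots)=0$ for $s+t>0$ then follows because $\mathrm{id}_\EuP$ has no higher components, rather than from the abstract strict-unitality clause of Definition \ref{defn:multifun}.
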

\begin{proof}
The diagram commutes on the level of cochain complexes: this follows by comparing Shklyarov's formula \eqref{eqn:Shkform} with our own definition.
\end{proof}

\begin{defn}
\label{defn:mukai}
Let $\EuC$ be a proper $A_\infty$ category. 
We define the \emph{Mukai pairing}
\begin{align}
\nonumber \langle ,\rangle_{Muk}: \HH_\bullet(\EuC) \otimes \HH_\bullet(\EuC) &\to \BbK\\
\label{eqn:mukaiainf}
 \langle \alpha,\beta \rangle_{Muk} &:= -\int \wedge_{\EuC_\Delta}(\alpha,\beta^\vee)
\end{align}
where $\EuC_\Delta$ is the diagonal bimodule, $\wedge$ is as in Definition \ref{defn:wedgepainf}, $\int$ denotes the Feigin--Losev--Shoikhet trace (which we can apply because $\EuC_\Delta$ is proper), and $\beta^\vee$ is the image of $\beta$ under the isomorphism of Definition \ref{defn:ccopp}.
\end{defn}

\begin{prop}
\label{prop:mukaimorita}
Let $\EuC$ and $\EuD$ be proper $A_\infty$ categories which are Morita equivalent. 
Then the isomorphism $\HH_\bullet(\EuC) \cong \HH_\bullet(\EuD)$ of Lemma \ref{lem:hochhommor} respects Mukai pairings.
\end{prop}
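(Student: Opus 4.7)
The plan is to reduce to the naturality of the Mukai pairing with respect to $A_\infty$ functors, and then apply the splitting of a Morita equivalence through Theorem \ref{thm:moritasplit}.

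First I would establish the naturality statement: for any $A_\infty$ functor $F\colon \EuC \to \EuD$ between proper $A_\infty$ categories such that $F^1$ is a quasi-isomorphism on every hom-space, the induced map $F_*\colon \HH_\bullet(\EuC) \to \HH_\bullet(\EuD)$ respects Mukai pairings. The main ingredient is Lemma \ref{lem:pullbacktri} applied with $G=H=F$, which gives an equality of $A_\infty$ tri-functors
\[ F_2 \circ (\mathsf{Id},F,F^{op}) \;=\; F_1 \circ (A_\infty((F\otimes F)^*),\mathsf{Id},\mathsf{Id}),\]
where $F_1$ and $F_2$ are the tri-functors of Lemma \ref{lem:trifun} for the pairs $(\EuC,\EuC)$ and $(\EuD,\EuD)$ respectively. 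Feeding Lemma \ref{lem:compatcomp} into this, and evaluating at $(\mathsf{Ch}(\EuD_\Delta),\alpha,\beta^\vee)$ for $\alpha,\beta \in \HH_\bullet(\EuC)$, and using that both $A_\infty$ functors and the dg pullback functor $(F\otimes F)^*$ send cohomological units to cohomological units (hence preserve Chern characters via Lemma \ref{lem:chernquasi}), yields
\[ \wedge_{\EuD_\Delta}(F_*\alpha, (F^{op})_*(\beta^\vee)) \;=\; \wedge_{(F\otimes F)^*\EuD_\Delta}(\alpha,\beta^\vee) \]
in $\HH_\bullet(\modules \BbK)$. A separate (easy) check that the isomorphism $\alpha \mapsto \alpha^\vee$ of Definition \ref{defn:ccopp} intertwines $F_*$ on $\EuC$-side with $(F^{op})_*$ on the opposite categories -- which is apparent from the formulae and is already observed on cyclic homology in Remark \ref{rmk:cycopp} -- rewrites the left-hand side as $\wedge_{\EuD_\Delta}(F_*\alpha,(F_*\beta)^\vee)$.

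Next I would compare $(F\otimes F)^*\EuD_\Delta$ with $\EuC_\Delta$ as $(\EuC,\EuC)$ bimodules. There is a tautological pre-homomorphism $\EuC_\Delta \to (F\otimes F)^*\EuD_\Delta$ built from the components $F^s$ of the $A_\infty$ functor; its leading term $F^1$ is a quasi-isomorphism on each hom-space by hypothesis, so the pre-homomorphism is a bimodule quasi-isomorphism. By Lemma \ref{lem:chernquasi} applied in the dg category $[\EuC,\EuC]$, the Chern characters of $\EuC_\Delta$ and $(F\otimes F)^*\EuD_\Delta$ agree in $\HH_0([\EuC,\EuC])$, so $\wedge_{(F\otimes F)^*\EuD_\Delta}(\alpha,\beta^\vee) = \wedge_{\EuC_\Delta}(\alpha,\beta^\vee)$. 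Applying the Feigin-Losev-Shoikhet trace $\int$ to both sides and inserting the sign $-1$ of Definition \ref{defn:mukai} gives $\langle F_*\alpha,F_*\beta\rangle_{Muk} = \langle \alpha,\beta\rangle_{Muk}$.

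To finish, I would invoke Theorem \ref{thm:moritasplit} to obtain the chain of $A_\infty$ functors
\[ \EuC \;\hookrightarrow\; \twsplit \EuC \;\overset{G}{\to}\; \twsplit \EuD \;\hookleftarrow\; \EuD,\]
in which the two flanking inclusions and the quasi-equivalence $G$ all satisfy the hypothesis that $F^1$ is a quasi-isomorphism on each hom-space, and they all preserve properness. Each therefore preserves the Mukai pairing by the previous paragraph, so the composite isomorphism on Hochschild homology does too. Finally, Lemma \ref{lem:hhommormap} identifies this composite with the isomorphism of Lemma \ref{lem:hochhommor}, concluding the proof.

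The main obstacle is bookkeeping: checking that the tautological bimodule pre-homomorphism $\EuC_\Delta \to (F\otimes F)^*\EuD_\Delta$ really is a quasi-isomorphism of $A_\infty$ bimodules (using that a pre-homomorphism is a quasi-isomorphism iff its leading term is), and verifying the compatibility $(F^{op})_*(\beta^\vee) = (F_*\beta)^\vee$ with the correct signs. The rest of the argument is a formal consequence of the lemmas on multi-functors and their action on Hochschild homology already established in this section.
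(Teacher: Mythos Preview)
Your proposal is correct and follows essentially the same route as the paper's proof: reduce via Theorem \ref{thm:moritasplit} to the chain of functors through $\twsplit$, use Lemmas \ref{lem:pullbacktri} and \ref{lem:compatcomp} to identify $\wedge_{\EuD_\Delta}(F_*\alpha,(F_*\beta)^\vee)$ with $\wedge_{(F\otimes F)^*\EuD_\Delta}(\alpha,\beta^\vee)$, and then invoke the quasi-isomorphism $\EuC_\Delta \simeq (F\otimes F)^*\EuD_\Delta$ (which the paper cites from the proof of Lemma \ref{lem:moritafull}) together with Lemma \ref{lem:chernquasi}. Your version is slightly more explicit in tracking the passage through $\vee$ and in invoking Lemma \ref{lem:hhommormap} at the end, but the argument is the same.
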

\begin{proof}
By Theorem \ref{thm:moritasplit}, it suffices to consider the case that the Morita equivalence is induced by a functor $F: \EuC \dashrightarrow \EuD$ which is cohomologically full and faithful and whose image split-generates. 
We will argue that
\[ \wedge_{\EuD_\Delta}(F_*\alpha,F_*\beta) = \wedge_{(F \otimes F)^* \EuD_\Delta}(\alpha,\beta) =\wedge_{\EuC_\Delta}.\]
The first equality follows by combining Lemma \ref{lem:pullbacktri} with Lemma \ref{lem:compatcomp}. 
To prove the second we observe that, because $F$ is cohomologically full and faithful, $(F \otimes F)^* \EuD_\Delta$ is quasi-isomorphic to $\EuC_\Delta$ in $[\EuC,\EuC]$ (cf. the proof of Lemma \ref{lem:moritafull}). 
Hence, by Lemma \ref{lem:chernquasi}, their Chern characters coincide, so the second equality is obvious from Definition \ref{defn:wedgepainf}. 
Composing with the Feigin--Losev--Shoikhet trace completes the proof.
\end{proof}

\begin{prop}
If $\EuC$ is a proper $\mathsf{dg}$ category, then our definition of the Mukai pairing on $\HH_\bullet(A_\infty(\EuC)) \cong \HH_\bullet(\EuC)$ (i.e., Definition \ref{defn:mukai}) coincides with that given by Shklyarov (i.e., Definition \ref{defn:dgmukai}).
\end{prop}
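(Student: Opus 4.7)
The plan is to reduce the proposition to a direct application of the compatibility results already established between the $A_\infty$ and $\mathsf{dg}$ worlds, with careful attention to two overall signs that will turn out to cancel. In more detail, unpacking our Definition \ref{defn:mukai} gives
\[ \langle \alpha, \beta \rangle_{Muk}^{A_\infty} = -\int \wedge_{\EuC_\Delta}\!\bigl(\alpha,\beta^\vee\bigr), \]
where $\EuC_\Delta$ is here the diagonal $(A_\infty(\EuC),A_\infty(\EuC))$ bimodule, $\wedge$ is built from the $A_\infty$ tri-functor of Lemma \ref{lem:trifun} applied to $\mathsf{Ch}(\EuC_\Delta)$ as in Definition \ref{defn:wedgepainf}, and $\beta^\vee$ is the image under the $A_\infty$ map of Definition \ref{defn:ccopp}. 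The goal is to show this equals Shklyarov's expression $\int \wedge^{dg}_{\EuC_\Delta}(\alpha,\beta^{\vee,dg})$ from Definition \ref{defn:dgmukai}.

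First I would identify $A_\infty(\EuC_\Delta^{dg})$ with the $A_\infty$ diagonal bimodule $\EuC_\Delta^{A_\infty}$ via Remark \ref{rmk:diags}; under this identification the Chern character $\mathsf{Ch}(\EuC_\Delta^{A_\infty})$ corresponds (on the chain level) to the Chern character of $\EuC_\Delta^{dg}$. Next, I would invoke Lemma \ref{lem:dgainfcompat} with $\EuP = \EuC_\Delta^{dg}$: it asserts precisely that our $A_\infty$ wedge product $\wedge_{A_\infty(\EuC_\Delta^{dg})}$ is intertwined with Shklyarov's $\wedge^{dg}_{\EuC_\Delta^{dg}}$ by the tautological identification $\HH_\bullet(\EuC) \cong \HH_\bullet(A_\infty(\EuC))$ of Remark \ref{rmk:tautid} in the first input, and by the corresponding tautological identification composed with the $A_\infty$-version-of-$\vee$ isomorphism in the second. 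Because the Feigin--Losev--Shoikhet trace is characterized by the unique isomorphism \eqref{eqn:FLSdef} (or equivalently, on the nose by the supertrace formula of Lemma \ref{lem:flsstr}), it is also compatible with the tautological identification. This reduces the problem to a comparison of the two versions of $\beta^\vee$.

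At this point Remark \ref{rmk:veesign} delivers the punchline: the $A_\infty$ version of $\vee$, transported through the tautological identification, differs from Shklyarov's $\mathsf{dg}$ version $\vee^{dg}$ by an overall sign $-1$. Substituting $\beta^{\vee,A_\infty} = -\beta^{\vee,dg}$ into the expression above yields
\[ \langle \alpha, \beta \rangle_{Muk}^{A_\infty} = -\int \wedge^{dg}_{\EuC_\Delta}\!\bigl(\alpha,-\beta^{\vee,dg}\bigr) = \int \wedge^{dg}_{\EuC_\Delta}\!\bigl(\alpha,\beta^{\vee,dg}\bigr) = \langle \alpha, \beta \rangle_{Muk}^{Shk}, \]
so the minus sign that I inserted in \eqref{eqn:mukaiainf} precisely cancels the sign discrepancy in the $\vee$ map, producing agreement with Shklyarov's convention.

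There is really no combinatorial obstacle here, since every nontrivial step has already been packaged into the three inputs above; the only thing that needs care is making sure I am applying the tautological identification consistently in all three slots (first input, second input via $\vee$, and the trace on the output), and that the sign in Remark \ref{rmk:veesign} is the only sign that appears beyond the explicit $-1$ in \eqref{eqn:mukaiainf}. The mild subtlety, and the reason the $-1$ was inserted in the definition in the first place, is exactly that the $A_\infty$ conventions for $\vee$ (Definition \ref{defn:ccopp}) and for the opposite category (Definition \ref{defn:oppainf}) differ from the naive $\mathsf{dg}$ conventions by signs which compose to $-1$; so the main thing to verify is simply that no further sign creeps in from the $\wedge$ operation itself, which is guaranteed by Lemma \ref{lem:dgainfcompat}.
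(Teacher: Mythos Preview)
Your proposal is correct and follows essentially the same approach as the paper: the paper's proof simply cites Lemma \ref{lem:dgainfcompat} together with Remarks \ref{rmk:diags} and \ref{rmk:veesign}, noting that the sign discrepancy in the two versions of $\vee$ is the reason for the minus sign in \eqref{eqn:mukaiainf}. You have written out exactly this argument in more detail, including the (implicit) compatibility of the Feigin--Losev--Shoikhet trace under the tautological identification.
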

\begin{proof}
Follows immediately from Lemma \ref{lem:dgainfcompat}, together with Remarks \ref{rmk:diags} and \ref{rmk:veesign} (the discrepancy between the $\mathsf{dg}$ and $A_\infty$ versions of the isomorphism $\vee$ is the reason for the minus sign in \eqref{eqn:mukaiainf}).
\end{proof}

\begin{prop}
\label{prop:mukform}
If $\EuC$ is an $A_\infty$ category with finite-dimensional $hom$-spaces (i.e., finite-dimensional on the cochain level, not just on the cohomology level), then the Mukai pairing is induced by the following chain-level map: if $\alpha = a_0[a_1|\ldots|a_s]$ and $\beta = b_0[b_1|\ldots|b_t]$, then
\begin{align}
\label{eqn:mukform}
 \langle \alpha,\beta \rangle_{Muk} &= \sum_{j,k}\mathsf{tr} \left(c \mapsto (-1)^\dagger \mu^*(\overbrace{a_0,\ldots,a_j},\mu^*(a_{j+1},\ldots,a_s,c,\overbrace{b_0,\ldots,b_k}),b_{k+1},\ldots,b_t) \right),\\ 
\nonumber \text{ where}\quad \dagger &= 1+\sum_{i=0}^j |a_i|' + |c| \cdot |\beta|.
 \end{align}
To clarify \eqref{eqn:mukform}: if the expression is not composable in $\EuC$, we set the summand to be $0$. `$c$' represents an element in the corresponding $hom$-space of $\EuC$.
\end{prop}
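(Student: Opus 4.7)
The plan is to directly unpack the chain-level definition of the Mukai pairing, exploit the chain-level finite-dimensionality hypothesis to convert the Feigin--Losev--Shoikhet trace into a supertrace, and then identify the resulting expression with \eqref{eqn:mukform}. By Definition \ref{defn:mukai}, $\langle\alpha,\beta\rangle_{Muk} = -\int \wedge_{\EuC_\Delta}(\alpha,\beta^\vee)$. Since $\EuC$ has chain-level finite-dimensional hom-spaces, $\EuC_\Delta$ is valued in $\mathsf{fin}\BbK$, so Lemma \ref{lem:flsstr} lets us compute $\int$ via the supertrace map $\mathsf{Str}$: it extracts the length-$0$ component of any chain in $CC_\bullet(A_\infty(\modules \BbK))$ and applies the supertrace. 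By Definition \ref{defn:wedgepainf}, $\wedge_{\EuC_\Delta}(\alpha,\beta^\vee) = F_*(\mathsf{Ch}(\EuC_\Delta),\alpha,\beta^\vee)$, where $F$ is the tri-functor of Lemma \ref{lem:trifun} and $\mathsf{Ch}(\EuC_\Delta)$ is represented at the chain level by the length-$0$ cycle $e_{\EuC_\Delta}$, the identity bimodule pre-homomorphism whose only non-vanishing component is $e_{\EuC_\Delta}^{0|1|0} = \mathrm{Id}$. The task thus reduces to extracting and supertracing the length-$0$ part of $F_*(e_{\EuC_\Delta},\alpha,\beta^\vee)$, computed via Lemma \ref{lem:multifunhoch}.

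For the output of $F_*$ to have length zero, no $F^*$'s can appear in the brackets: every element of $\alpha$ and $\beta^\vee$ must be distributed cyclically among the three composed $F^*$'s at the front of the formula, with $a_0$ landing in the second (overbrace 2) and $b_0$ in the third (overbrace 3). Because $e_{\EuC_\Delta}^{s|1|t}$ vanishes for $s+t \geq 1$ and the tri-functor $F$ has trivial components $F^{\geq 2;\ast;\ast}$, the first $F^*$ is forced to be $F^{1;0;0}(e_{\EuC_\Delta};;)$, a signed identity morphism in $\modules \BbK$; no $a_i$ or $b_j$ enters it. The remaining $a$'s and $b$'s distribute cyclically between the second and third $F^*$'s, each of the form $F^{0;\ast;\ast}$, and by Lemma \ref{lem:trifun} such a component acts on its argument by $p \mapsto \pm \mu_\EuC(c_1,\dots,c_{s'},p,d_{t'},\dots,d_1)$, reversing the $d$-inputs from $\EuC^{op}$ back to $\EuC$. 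The nested $\mu^2$-composition of $\pm\mathrm{Id}$ with these two $F^{0;\ast;\ast}$'s in $A_\infty(\modules \BbK)$ yields the composite $c \mapsto \pm F^*_2(F^*_3(c))$, that is, $c \mapsto \pm \mu^*(a_0,\dots,a_j,\mu^*(a_{j+1},\dots,a_s,c,b_0,\dots,b_k),b_{k+1},\dots,b_t)$. Here $j,k$ parametrize the cuts in $\alpha$ and $\beta$, and the sums over the cyclic positions of $a_0$ in box 2 and $b_0$ in box 3 reproduce precisely the overbrace notation of \eqref{eqn:mukform}. Applying the supertrace then gives the claimed formula.

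The principal obstacle is the sign bookkeeping. The sign $(-1)^{\sum_{i=j+1}^s |a_i|' + |c|\cdot|\beta|}$ in \eqref{eqn:mukform} is the net contribution of the following: the signs in Lemma \ref{lem:trifun} arising from reversing the $\EuC^{op}$-inputs; the Koszul sign $\maltese$ and the explicit sign in the formula of Lemma \ref{lem:multifunhoch}; the sign in Definition \ref{defn:ccopp}'s $\vee$-map applied to $\beta$; the Koszul $(-1)^{|f|}$ appearing in $\mu^2_{A_\infty(\modules \BbK)}(f,g) = (-1)^{|f|} f \circ g$; and the overall $-1$ in Definition \ref{defn:mukai}. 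Verifying that all of these combine to the stated sign is most efficiently handled using the graphical notation of Appendix \ref{app:signs}.
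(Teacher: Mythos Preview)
Your proposal is correct and follows essentially the same approach as the paper's proof, which is extremely terse: it simply observes that finite-dimensionality lets $\wedge_{\EuC_\Delta}$ land in $CC_\bullet(\mathsf{fin}\BbK)$, invokes Lemma \ref{lem:flsstr} to replace $\int$ by $\mathsf{Str}$, and declares that ``this yields \eqref{eqn:mukform}.'' You have spelled out the computation the paper leaves implicit --- extracting the length-$0$ part of $F_*(e_{\EuC_\Delta},\alpha,\beta^\vee)$ via Lemma \ref{lem:multifunhoch} and tracking how the tri-functor components assemble into the nested $\mu^*$'s --- and correctly identified that the sign verification is best done via the graphical calculus of Appendix \ref{app:signs}.
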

\begin{proof}
By our assumption that $\EuC$ has finite-dimensional $hom$-spaces, $\wedge_{\EuC_\Delta}$ lands in $CC_\bullet(\fink \BbK)$. 
Therefore, we have
\[ \int \wedge_{\EuC_\Delta}(\alpha,\beta^\vee) = \mathsf{Str} \left(\wedge_{\EuC_\Delta}(\alpha,\beta^\vee)\right),\]
by Lemma \ref{lem:flsstr}. 
This yields \eqref{eqn:mukform}.
\end{proof}

\begin{example}
\label{eg:mukeul}
If $\EuC$ is an $A_\infty$ category with finite-dimensional $hom$-spaces, then \eqref{eqn:mukform} implies immediately that for any $X,Y \in Ob(\EuC)$, we have
\[ \langle \mathsf{Ch}(X),\mathsf{Ch}(Y) \rangle_{Muk} = \chi(\Hom^\bullet(X,Y))\]
(by applying the formula to $\alpha = e_X$ and $\beta = e_Y$, and observing that $\mu^2(e_X,\mu^2(a,e_Y)) = (-1)^{|a|} a$). 
Hence, the same holds for any proper $A_\infty$ category, by the homological perturbation lemma and Morita invariance. 
This is \cite[Theorem 1.3]{Shklyarov2012}.
\end{example}

We recall that an $A_\infty$ category $\EuC$ is called \emph{smooth} (or \emph{homologically smooth}) if the diagonal bimodule $\EuC_\Delta$ is \emph{perfect}, i.e., split-generated by tensor products of Yoneda modules (see \cite{Kontsevich2006a}). 
An $A_\infty$ category which is proper and smooth is called \emph{saturated}.

\begin{prop}
If $\EuC$ is saturated, then the Mukai pairing is non-degenerate.
\end{prop}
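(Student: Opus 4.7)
The strategy is to reduce to the case of dg categories, where this non-degeneracy result is due to Shklyarov, and then transport the conclusion back along a Morita equivalence using Proposition \ref{prop:mukaimorita}.

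First, I would replace $\EuC$ by a quasi-equivalent dg model. Concretely, the Yoneda embedding realizes $\EuC$ as a cohomologically full and faithful $A_\infty$ subcategory of the dg category $\modules \EuC$; passing to the full dg subcategory on a split-generating set for $\twsplit \EuC$ yields a small dg category $\EuD$ that is Morita equivalent to $\EuC$ in the sense of Theorem \ref{thm:moritasplit}. Properness of $\EuC$ transports immediately to $\EuD$ (finite-dimensionality of $\mathrm{Hom}^\bullet$ is a cohomological condition, preserved under quasi-equivalence of split-closures). Smoothness of $\EuC$ transports as well: a Morita equivalence induces a quasi-equivalence of bimodule categories $[\EuC,\EuC] \simeq [\EuD,\EuD]$ under which $\EuC_\Delta$ corresponds to $\EuD_\Delta$ up to quasi-isomorphism (this is implicit in the manipulations at the end of Section 4.3), so perfectness of one diagonal bimodule is equivalent to perfectness of the other.

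Next, I would apply Proposition \ref{prop:mukaimorita}: the induced isomorphism $\HH_\bullet(\EuC) \cong \HH_\bullet(\EuD)$ intertwines the two Mukai pairings, so non-degeneracy for $\EuD$ implies non-degeneracy for $\EuC$. It therefore suffices to prove the proposition for a proper smooth dg category. For that I would cite Shklyarov's theorem \cite[Theorem 1.4]{Shklyarov2012}. The mechanism of his proof is the following: smoothness guarantees that the inverse dualizing bimodule $\EuD^{!} := \mathrm{RHom}_{[\EuD,\EuD]}(\EuD_\Delta,\, \EuD \otimes \EuD^{op})$ is perfect, and that the canonical bimodule evaluation map gives a natural non-degenerate pairing between $\HH_\bullet(\EuD)$ and $\HH_\bullet(\EuD, \EuD^{!})$; properness then identifies $\HH_\bullet(\EuD, \EuD^{!})$ with $\HH_\bullet(\EuD)$ via a Serre duality type isomorphism, and a diagram chase using the K\"{u}nneth quasi-isomorphism \eqref{eqn:kunndg} and the Feigin-Losev-Shoikhet trace \eqref{eqn:FLS} matches the resulting pairing with $\langle\cdot,\cdot\rangle_{Muk}$ as defined in Definition \ref{defn:dgmukai}.

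The main obstacle is the verification that smoothness survives the reduction from $\EuC$ to its dg model $\EuD$. Unlike properness, which is purely a finite-dimensionality statement about $\mathrm{Hom}$-spaces, smoothness is a statement about the diagonal bimodule inside the derived category of $(\EuC,\EuC)$-bimodules; tracking it through the Morita equivalence requires knowing that the quasi-equivalence of bimodule categories sends $\EuC_\Delta$ to $\EuD_\Delta$ up to quasi-isomorphism, a fact that follows from the explicit tensor product manipulations in the proof of Lemma \ref{lem:hochhommor} but is not recorded as a separate statement. Once this point is checked, the proposition follows by combining Morita invariance with Shklyarov's result for dg categories.
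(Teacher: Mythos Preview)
Your proposal is correct and follows essentially the same route as the paper: reduce to a dg model via the Yoneda embedding, invoke Shklyarov's non-degeneracy theorem \cite[Theorem 1.4]{Shklyarov2012} for dg categories, and transport back via the Morita invariance of the Mukai pairing (Proposition \ref{prop:mukaimorita}). The paper's proof is terser because it has already remarked, just before this proposition, that smoothness is preserved under quasi-equivalence, so the issue you flag as the ``main obstacle'' is taken as settled.
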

\begin{proof}
The result was proved for $\mathsf{dg}$ categories in \cite[Theorem 1.4]{Shklyarov2012}. 
Any $A_\infty$ category is quasi-equivalent to a $\mathsf{dg}$ category via the Yoneda embedding, so the result follows by Proposition \ref{prop:mukaimorita}.
\end{proof}

\subsection{Higher residue pairing on $\mathsf{dg}$ categories}

We recall the definition of the higher residue pairing given in \cite{Shklyarov2013}. 
If $\EuC$ and $\EuD$ are $\mathsf{dg}$ categories, then there is a K\"{u}nneth map of cochain complexes, extending \eqref{eqn:kunndg}:
\begin{equation}
\label{eqn:kunndgcyc}
C^-_\bullet(\EuC) \otimes C^-_\bullet(\EuD) \to C^-_\bullet(\EuC \otimes \EuD),
\end{equation}
and similarly for the other versions of cyclic homology (see \cite[Proposition 2.5]{Shklyarov2013}). 
This map induces an isomorphism on periodic cyclic homology, but need not induce an isomorphism on negative cyclic homology.

As before, a $\mathsf{dg}$ $(\EuC,\EuD)$ bimodule $\EuP$ induces a $\mathsf{dg}$ functor $\EuC \otimes \EuD^{op} \to \modules \BbK$; composing this with the K\"{u}nneth map \eqref{eqn:kunndgcyc} yields a map
\begin{equation}
\label{eqn:wedgetildecyc}
\tilde{\wedge}_\EuP: C_\bullet^- (\EuC) \otimes C_\bullet^-(\EuD^{op}) \to C_\bullet^-(\modules \BbK).
\end{equation}

Because the inclusion $\BbK \hookrightarrow \perfdg \BbK$ induces a quasi-isomorphism of Hochschild chain complexes, it also induces a quasi-isomorphism of cyclic homology complexes, by Corollary \ref{cor:cycmorita}.
We therefore obtain a quasi-isomorphism $C_\bullet^-(\BbK) \to C_\bullet^-(\perfdg \BbK)$. 
We know that $\HC^-_\bullet(\BbK) \cong \BbK\power{u}$ (c.f. \cite[(2.1.12)]{Loday1998}), so we obtain an isomorphism on the level of cohomology:
\begin{equation}
\label{eqn:FLScyc}
 \tildeint: \HC^-_\bullet(\perfdg \BbK) \to \BbK\power{u},
\end{equation}
the `cyclic Feigin--Losev--Shoikhet trace' (and similarly for periodic cyclic homology, where the map is to $\BbK\laurents{u}$, and positive cyclic homology, where the map is to $\BbK[u,u^{-1}]/\BbK[u]$).

The $\BbK\power{u}$-linear extension of the map $\mathsf{Str}$ defined in Lemma \ref{lem:flsstr} defines a chain map
\[ \widetilde{\mathsf{Str}}: CC^-_\bullet(\fink \BbK) \to \BbK\power{u}.\]
The same argument as given in the proof of Lemma \ref{lem:flsstr} shows that the induced map on the level of cohomology coincides with the map
\[ \HC^-_\bullet(\fink \BbK) \to \HC^-_\bullet(\perfdg \BbK) \overset{\overset{\,\,\sim}{\int}}{\to} \BbK\power{u}.\]

\begin{defn}
\label{defn:dghigherres}
If $\EuC$ is a proper $\mathsf{dg}$ category, we define the \emph{higher residue pairing}, which is the pairing
\begin{align*}
 \langle ,\rangle_{res}: \HC^-_\bullet(\EuC) \times \HC^-_\bullet(\EuC) &\to \BbK\power{u}\\
 \langle \alpha, \beta \rangle_{res} &:= \tildeint \tilde{\wedge}_{\EuC_\Delta}(\alpha,\beta^\vee).
 \end{align*}
The pairing is sesquilinear.
We obtain similar pairings on $\HP_\bullet$ and $\HC^+_\bullet$.
\end{defn}

\begin{rmk}
Because the K\"{u}nneth quasi-isomorphism for negative cyclic homology \eqref{eqn:kunndgcyc} extends that for Hochschild homology \eqref{eqn:kunndg}, and because the cyclic Feigin--Losev--Shoikhet trace \eqref{eqn:FLScyc} extends the non-cyclic version \eqref{eqn:FLS}, the higher residue pairing extends the Mukai pairing, in the sense that
\[ G\langle \alpha, \beta \rangle_{res} = \langle G\alpha, G\beta \rangle_{Muk},\]
where on the left-hand side, $G:\BbK\power{u} \to \BbK$ is the map setting $u=0$, and on the right-hand side, $G: \HC^-_\bullet \to \HH_\bullet$ is the map induced on Hochschild complexes.
\end{rmk}

\subsection{Higher residue pairing for $A_\infty$ bimodules}

\begin{defn}
\label{defn:F'}
Let $F: \EuC_1 \times \EuC_2 \times \EuC_3 \dashrightarrow \EuD$ be an $A_\infty$ tri-functor, where $\mu^{\ge 3}_\EuD = 0$. 
We define a $\BbK\power{u}$-linear map 
\[ F'_*: CC_\bullet^-(\EuC_1) \otimes CC_\bullet^-(\EuC_2) \otimes CC_\bullet^-(\EuC_3) \to CC_\bullet^-(\EuD)\]
as a sum of three maps: $F'_* := F1+F2+F3$.
For $\alpha= a_0[a_1|\ldots|a_s]$, $\beta = b_0[b_1|\ldots|b_t]$, $\gamma = c_0[c_1|\ldots|c_u]$, we define
\begin{multline}
F1(\alpha,\beta,\gamma) := \\
\sum (-1)^{\maltese+\dagger+|\beta|'} e^+[F^*(a_0,\ldots;b_0,\ldots;c_0,\ldots)|\ldots|\mu^2_\EuD(F^*(\overbrace{\vphantom{l}\ldots}^1),F^*(\overbrace{\vphantom{l}\ldots}^2))|F^*(\ldots)|\ldots|F^*(\overbrace{\vphantom{l}\ldots}^3)|\ldots|F^*(\ldots)],
\end{multline}
where $\maltese$ is Koszul sign associated to re-ordering the inputs $a_i,b_j,c_k$ as before (ignoring $e^+$), and $\dagger$ is the Koszul sign associated to commuting $\mu^2_\EuD$ (equipped with sign $1$) to the front of the expression, where all $F^*$ have degree $0$, all $a_i,b_j,c_k$ have their reduced degrees, and $e^+$ has degree $0$.

We define
\begin{multline}
F2(\alpha,\beta,\gamma) := \\
\sum (-1)^{\maltese + |\beta|} F^*(\overbrace{\vphantom{l}a_0,\ldots;b_0,\ldots;c_0,\ldots}^3)[F^*(\ldots)|\ldots|F(\overbrace{\vphantom{l}\ldots}^1)|\ldots|F(\overbrace{\vphantom{l}\ldots}^2)|\ldots|F^*(\ldots)].
\end{multline}

We define
\begin{multline}
F3(\alpha,\beta,\gamma) := 
\sum (-1)^{\maltese + |\beta|} F^*(\overbrace{\vphantom{l}a_0,\ldots;b_0,\ldots;c_0,\ldots}^{1,3})[F^*(\ldots)|\ldots|F(\overbrace{\vphantom{l}\ldots}^2)|\ldots|F^*(\ldots)].
\end{multline}
\end{defn}

\begin{lem}
\label{lem:multifuncyc}
Let $F: \EuC_1 \times \EuC_2 \times \EuC_3 \dashrightarrow \EuD$ be an $A_\infty$ tri-functor, where $\mu^{\ge 3}_\EuD = 0$. 
Then there is a $\BbK\power{u}$-linear chain map
\[ \widetilde{F}_*: CC_\bullet^-(\EuC_1) \otimes CC_\bullet^-(\EuC_2) \otimes CC_\bullet^-(\EuC_3) \to CC_\bullet^-(\EuD)\]
(the tensor products are over $\BbK\power{u}$), defined by
\[ \widetilde{F}_* := F_* + uF'_*,\]
where $F_*$ is as in Lemma \ref{lem:multifunhoch} (in the case $n=3$), and $F'_*$ is as in Definition \ref{defn:F'}. 
The analogous results also hold for the periodic and positive versions of cyclic homology.
\end{lem}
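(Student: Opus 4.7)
Proof proposal.

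The map $\widetilde{F}_*$ is automatically $\BbK\power{u}$-linear, since both $F_*$ and $F'_*$ are defined on the underlying non-unital Hochschild complexes and extended by scalars. The content of the lemma is that $\widetilde{F}_*$ is a chain map, i.e.\ that $(b+uB)\circ\widetilde{F}_* = \widetilde{F}_*\circ(b+uB)$ on $CC^{nu}_\bullet(\EuC_1)\otimes CC^{nu}_\bullet(\EuC_2)\otimes CC^{nu}_\bullet(\EuC_3)$. Expanding by powers of $u$, this splits into the three identities
\begin{align*}
b\circ F_* &= F_*\circ b, \\
b\circ F'_* + B\circ F_* &= F'_*\circ b + F_*\circ B, \\
B\circ F'_* &= F'_*\circ B.
\end{align*}
The first is exactly Lemma \ref{lem:multifunhoch} in the case $n=3$.

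The second identity is the main content. The plan is to compare $B\circ F_*$ term by term with $F_*\circ B$. Applying $B$ to $F_*(\alpha,\beta,\gamma)$ prepends $e^+$ to the output chain and then cyclically sums over all positions of the underbrace; conversely $F_*\circ B$ inserts $e^+$ into the first slot of one of the three $F$-outputs via Connes' formula. The discrepancy arises precisely because a cyclic rotation of the output chain of $F_*$ may move terms across one of the two $\mu^2_\EuD$-junctions that concatenate the three $F$-outputs into a single chain. Each such "boundary crossing" can be resolved by first applying $F$ to get a chain that still contains a $\mu^2_\EuD$ somewhere and then applying $b$, or vice versa, and the resulting terms are exactly what $F1$, $F2$, $F3$ are designed to produce: $F1$ corresponds to $e^+$ sitting strictly between the two $\mu^2_\EuD$'s in the output (so the $\mu^2_\EuD$ appears explicitly in the cochain), while $F2$ and $F3$ correspond to $e^+$ being absorbed at each of the two $\mu^2_\EuD$-junctions, turning the junction into a single $F^*$-application (that is, the three summands track the three positions of $e^+$ relative to the two $\mu^2_\EuD$'s). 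The verification is a direct expansion; signs are most easily managed via the graphical notation of Appendix \ref{app:signs}.

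The third identity is the simplest. By inspection of Definition \ref{defn:F'}, $F1(\alpha,\beta,\gamma)$ already has $e^+$ in position $0$, so $B\circ F1 \equiv 0$ modulo degenerate chains; and likewise $F1\circ B = 0$ because $B$ produces a chain with $e^+$ in position $0$, which when fed into $F$ in any non-trivial way vanishes by the $A_\infty$ multifunctor analogue of strict unitality arising from $e^+$. For $F2$ and $F3$, both sides are sums over double cyclic permutations of the inputs with an $e^+$ prepended, and the two resulting sums are tautologically equal after matching the overbrace positions.

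The principal obstacle is the sign bookkeeping in identity (1): there are several independent sources of Koszul signs (the reduced degrees of the $a_i,b_j,c_k$, the cyclic reorderings implicit in the overbrace notation, the sign $\dagger$ commuting $\mu^2_\EuD$ to the front, and the parity $|\beta|$ arising from the tri-functor composition), and one must check that all cancellations occur correctly. Modulo these sign verifications, which become routine in the graphical calculus, the lemma follows. The analogous argument for $\HP_\bullet$ and $\HC^+_\bullet$ is identical, since the differential $b+uB$ is the same and nothing in the argument uses the restriction $W = \BbK[u]$.
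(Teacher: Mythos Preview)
Your proposal is correct and follows essentially the same approach as the paper: decompose the chain-map equation by powers of $u$ into the three identities you display, invoke Lemma~\ref{lem:multifunhoch} for the $u^0$ part, and defer the remaining two to the graphical sign calculus of Appendix~\ref{app:signs}. The paper's proof says precisely this and omits all further detail, so your added heuristic explanation of how $F1$, $F2$, $F3$ absorb the discrepancy between $B\circ F_*$ and $F_*\circ B$ goes beyond what the paper writes (though note your closing reference to ``identity (1)'' should read ``the second identity'').
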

\begin{proof}
In order to prove that
\[ \widetilde{F}_* \circ (b+uB) = (b+uB)  \circ \widetilde{F}_*,\]
it suffices to prove that: $F_* \circ b = b \circ F_*$ (we proved this in Lemma \ref{lem:multifunhoch});
$F'_* \circ b + F_* \circ B = b \circ F'_* + B \circ F_*$; and $F'_* \circ B = B \circ F'_*$.
Each of these is a trivial check using the graphical notation of Appendix \ref{app:signs}; we omit the details.
\end{proof}

\begin{defn}
\label{defn:cycchern}
As in Definition \ref{defn:chernchar}, any object $X$ of an $A_\infty$ category $\EuC$ has an associated `cyclic Chern character' $ \widetilde{\mathsf{Ch}}(X) \in \HC_0^-(\EuC)$ 
(similarly for periodic and positive versions). 
Quasi-isomorphic objects have the same cyclic Chern character. 
If $\EuC$ is strictly unital, the Chern character has a particularly simple cochain-level representative. 
Namely, in the presence of strict units, we can define the Connes differential (and all other operations we have considered so far) using the strict units $e$ in place of $e^+$. 
This gives the \emph{unital cyclic complex} $ ((CC_\bullet/D_\bullet) \widehat{\otimes} W, b+uB)$.  
In the unital cyclic complex, $\widetilde{\mathsf{Ch}}(X)$ is represented on the cochain level by the length-$0$ cycle $e_X$.
\end{defn}

\begin{rmk}
We do not give a proof of the assertions made in Definition \ref{defn:cycchern}, but they are standard: in fact, one can show that $\widetilde{\mathsf{Ch}}(X)$ depends only on the class of $[X] \in K_0(\EuC)$ (cf. \cite[Lemma 8.4]{Seidel2013}).
\end{rmk}

\begin{defn}
\label{defn:ainfcycwedge}
Let $\EuP$ be an $A_\infty$ $(\EuC,\EuD)$ bimodule. 
We define a $\BbK\power{u}$-linear pairing
\begin{eqnarray*} 
\widetilde{\wedge}_\EuP: \HC_\bullet^-(\EuC) \otimes_{\BbK\power{u}} \HC_\bullet^-(\EuD^{op}) &\to \HC^-_\bullet(A_\infty(\modules \BbK)),\\
 \widetilde{\wedge}_\EuP(\alpha,\beta) &:= \widetilde{F}_*\left(\widetilde{\mathsf{Ch}}(\EuP),\alpha,\beta \right),
 \end{eqnarray*}
where $F: A_\infty([\EuC,\EuD]) \times \EuC \times \EuD^{op} \dashrightarrow A_\infty(\modules \BbK)$ is the $A_\infty$ tri-functor of Lemma \ref{lem:trifun}, and $\widetilde{F}_*$ is the induced map, in accordance with Lemma \ref{lem:multifuncyc}.
\end{defn}

\begin{lem}
\label{lem:dgainfcyccompat}
Definition \ref{defn:ainfcycwedge} is compatible with the corresponding definition in the $\mathsf{dg}$ world, i.e., the following diagram commutes:
\[ \xymatrix{ \HC^-_\bullet(\EuC) \otimes \HC^-_\bullet(\EuD^{op}) \ar[r] \ar[d]^{\widetilde{\wedge}_\EuP} & \HC^-_\bullet(A_\infty(\EuC)) \otimes \HC^-_\bullet(A_\infty(\EuD)^{op}) \ar[d]^{\widetilde{\wedge}_{A_\infty(\EuP)}} \\
\HC^-_\bullet(\modules \BbK) \ar[r] & \HC^-_\bullet(A_\infty(\modules \BbK)).}\]
Here, the horizontal arrows are the tautological identifications (or the isomorphism induced by the the isomorphism of Remark \ref{rmk:minusop}, in the case of $\EuD^{op}$). 
The left vertical map is the map \eqref{eqn:wedgetildecyc}, and the right vertical map is the map introduced in Definition \ref{defn:ainfcycwedge}.
\end{lem}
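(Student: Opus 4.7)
The plan is to reduce the diagram to a chain-level identity, then verify it by direct comparison of formulae. Throughout, I use the tautological identifications of Remark \ref{rmk:tautid} and Remark \ref{rmk:dgainfcyc}: these equate the $\mathsf{dg}$ and $A_\infty$ Hochschild chain complexes (for $\EuC$, $\EuD^{op}$, $\modules \BbK$) together with their Connes differentials $B$, and hence their cyclic complexes with the $b+uB$ differential. Under these identifications, the horizontal arrows of the diagram become the identity (up to the sign discrepancy of Remark \ref{rmk:veesign} in the $\EuD^{op}$ slot, which is absorbed into the sesquilinear structure the same way on both sides of the square). So it suffices to check that the two vertical maps agree on the level of chains.

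First I would handle the ``$u=0$'' part. Setting $u=0$ in both vertical maps recovers the diagram of Lemma \ref{lem:dgainfcompat}, which we have already established: the $\mathsf{dg}$ pairing restricted to $\BbK$ (not $\BbK\power{u}$) coefficients is the Shklyarov pairing $\wedge_\EuP$ of \eqref{eqn:Shkform}, while the $A_\infty$ pairing becomes $\wedge_{A_\infty(\EuP)} = F_*(\mathsf{Ch}(A_\infty(\EuP)),-,-)$. In the Chern character slot we take the length-zero cycle $e_\EuP$ (Definition \ref{defn:cycchern}, using that $[\EuC,\EuD]$ is strictly unital), and the $F_*$ of Lemma \ref{lem:multifunhoch} reduces in this case to precisely the shuffle of the operators $L(a_i)$, $R(b_j)$ that appears in Shklyarov's formula \eqref{eqn:Shkform}: the three tree-shaped $\mu^2_\EuD$ compositions of $F^*$-terms in Lemma \ref{lem:multifunhoch} collapse to composition in $\modules \BbK$, while the $F^*$ evaluated on the three tuples of morphisms are exactly the $L(a_i)$ and $R(b_j)$ operators from Lemma \ref{lem:trifun}. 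This is exactly the content of Lemma \ref{lem:dgainfcompat}.

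Next I would check the linear-in-$u$ correction. The $\mathsf{dg}$ K\"unneth map for cyclic chains \eqref{eqn:kunndgcyc}, as constructed by Shklyarov, differs from the tensor product of Hochschild K\"unneth maps by a term of order $u$ that inserts one Connes differential somewhere in the shuffle. Applied to $\mathsf{Ch}(\EuP) \otimes \alpha \otimes \beta^\vee$ (really to $e_\EuP \otimes \alpha \otimes \beta^\vee$ via the factorization through $\EuC \otimes \EuD^{op}$ and the bimodule functor), this correction produces three types of terms depending on where $B$ is applied: on the first factor (giving a leading $e^+$ followed by a cyclically-shuffled word containing a single $\mu^2_\EuD$ product, matching $F1$); on the second (matching $F2$); or on the third (matching $F3$). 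These three cases are exactly what the three summands $F1,F2,F3$ of Definition \ref{defn:F'} encode.

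The main obstacle is a sign-and-bookkeeping check: matching the Koszul sign $\dagger$ and the factors $(-1)^{|\beta|'}$, $(-1)^{|\beta|}$ appearing in $F1,F2,F3$ against the signs arising in the shuffle formula for the $\mathsf{dg}$ K\"unneth map, under the conversion of Remark \ref{rmk:Getzsigns} (Getzler's convention has $b-uB$) and Remark \ref{rmk:veesign} (the $\vee$ isomorphism differs by an overall $-1$ between the $\mathsf{dg}$ and $A_\infty$ settings, compensating for the minus sign in \eqref{eqn:mukaiainf} and the sign choice inside $F1$--$F3$). I would carry this out using the graphical notation of Appendix \ref{app:signs}: each of the three terms $F1,F2,F3$ corresponds to one position of the ``$B$'' vertex in the combined shuffle tree, and the graphical notation makes it straightforward to verify that the Koszul signs agree on the nose. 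Once the chain-level identity $\widetilde{F}_*(e_\EuP,\alpha,\beta^\vee) = \tilde{\wedge}_{\EuP}(\alpha,\beta^\vee)$ is established, commutativity of the diagram (on cohomology) follows immediately.
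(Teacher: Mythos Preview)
Your approach is the same as the paper's, which simply asserts ``the diagram commutes on the level of cochain complexes'' and leaves the verification to the reader. You have supplied the outline of that verification, which is helpful.

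One point in your outline is not quite right and could lead you astray if you tried to execute the details. You describe $F1,F2,F3$ as corresponding to ``where $B$ is applied: on the first factor \ldots; on the second \ldots; or on the third.'' But the $\mathsf{dg}$ K\"unneth map \eqref{eqn:kunndgcyc} is a \emph{two}-variable construction ($\EuC$ and $\EuD^{op}$), so on the $\mathsf{dg}$ side there is no third factor to apply $B$ to. The correct mechanism for the match is different: the tri-functor $F$ of Lemma~\ref{lem:trifun} is strictly unital in its first entry, so plugging in the length-zero cycle $e_\EuP$ forces $F^{1;s;t}(e_\EuP;\ldots)=0$ unless $s=t=0$, while $F^{1;0;0}(e_\EuP;;)$ is the identity morphism in $\modules\BbK$. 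This strict unitality collapses the three-variable formula $\widetilde{F}_*(e_\EuP,-,-)=F_*(e_\EuP,-,-)+uF'_*(e_\EuP,-,-)$ to a genuinely two-variable expression; the surviving terms of $F1,F2,F3$ then line up with the cyclic-shuffle correction in Shklyarov's two-variable formula, post-composed with the strict functorial map $\EuP_*$. With that adjustment, the rest of your bookkeeping (and the use of the graphical calculus for signs) goes through as you describe.
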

\begin{proof}
The diagram commutes on the level of cochain complexes.
\end{proof}

\begin{defn}
\label{defn:ainfhigherres}
Let $\EuC$ be a proper $A_\infty$ category. 
We define the \emph{higher residue pairing}
\begin{align}\nonumber \langle ,\rangle_{res}: \HC^-_\bullet(\EuC) \times \HC^-_\bullet(\EuC) &\to \BbK\power{u}\\
\label{eqn:resainf}
 \langle \alpha,\beta \rangle_{res} &:= -\tildeint \widetilde{\wedge}_{\EuC_\Delta}(\alpha,\beta^\vee)
\end{align}
where $\EuC_\Delta$ is the diagonal bimodule, $\widetilde{\wedge}$ is as in Definition \ref{defn:ainfcycwedge}, $\tildeint$ denotes the cyclic Feigin--Losev--Shoikhet trace (which we can apply because $\EuC_\Delta$ is proper), and $\beta^\vee$ is the image of $\beta$ under the isomorphism of Remark \ref{rmk:cycopp}. 
The higher residue pairing is $\BbK\power{u}$-sesquilinear.
\end{defn}

\begin{prop}[Morita invariance of higher residue pairing]
\label{prop:higherresMor}
Let $\EuC$ and $\EuD$ be proper $A_\infty$ categories, which are Morita equivalent. 
Then the isomorphism $\HC_\bullet^-(\EuC) \cong \HC_\bullet^-(\EuD)$ of Corollary \ref{cor:cycGMmor} respects higher residue pairings.
\end{prop}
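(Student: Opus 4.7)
The plan is to mirror the proof of Proposition \ref{prop:mukaimorita}, lifting each step from Hochschild to negative cyclic homology. By Theorem \ref{thm:moritasplit}, a Morita equivalence between $\EuC$ and $\EuD$ is witnessed by a chain of $A_\infty$ functors
\[ \EuC \hookrightarrow \twsplit \EuC \to \twsplit \EuD \hookleftarrow \EuD,\]
each of which is cohomologically full and faithful with split-generating image, and each of which induces an isomorphism on $\HC^-_\bullet$ by Corollary \ref{cor:cycGMmor}. It therefore suffices to check that any such functor $F:\EuA \to \EuB$ satisfies $\langle F_*\alpha, F_*\beta\rangle_{res} = \langle \alpha,\beta\rangle_{res}$.

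First I would promote Lemma \ref{lem:compatcomp} to the cyclic setting: for composable $A_\infty$ multifunctors whose outer target has vanishing higher products, the maps $\widetilde{F}_*=F_* + u F'_*$ of Lemma \ref{lem:multifuncyc} satisfy the same associativity as the $F_*$. Granting this, I apply Lemma \ref{lem:pullbacktri} to the pair $(G,H)=(F,F)$ to obtain the equality of $A_\infty$ tri-functors
\[ F_\EuB \circ (\mathrm{Id},F,F^{op}) \;=\; F_\EuA \circ \bigl(A_\infty((F\otimes F)^*),\mathrm{Id},\mathrm{Id}\bigr), \]
pass to induced cyclic maps, and evaluate on $(\widetilde{\mathsf{Ch}}(\EuB_\Delta),\alpha,\beta)$. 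Using that $A_\infty((F\otimes F)^*)$ sends the Chern character of $\EuB_\Delta$ to the Chern character of $(F\otimes F)^*\EuB_\Delta$ (by naturality of $\widetilde{\mathsf{Ch}}$, cf.\ Definition \ref{defn:cycchern}), this yields
\[ \widetilde{\wedge}_{\EuB_\Delta}(F_*\alpha,F_*\beta) \;=\; \widetilde{\wedge}_{(F\otimes F)^*\EuB_\Delta}(\alpha,\beta) \]
on the level of cohomology.

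Next, exactly as in the proof of Proposition \ref{prop:mukaimorita} (via Lemma \ref{lem:moritafull}), the hypotheses on $F$ imply that $(F\otimes F)^*\EuB_\Delta$ is quasi-isomorphic to $\EuA_\Delta$ in $[\EuA,\EuA]$. By the cyclic analogue of Lemma \ref{lem:chernquasi} noted in Definition \ref{defn:cycchern}, their cyclic Chern characters agree, so $\widetilde{\wedge}_{(F\otimes F)^*\EuB_\Delta} = \widetilde{\wedge}_{\EuA_\Delta}$ at the level of $\HC^-_\bullet$. Post-composing with $\tildeint$, which is natural with respect to $A_\infty$ functors between proper categories by functoriality of \eqref{eqn:FLScyc}, and using the compatibility $F_*(\beta^\vee) = (F_*\beta)^\vee$ furnished by Remark \ref{rmk:cycopp} together with the opposite-functor construction, gives $\langle F_*\alpha,F_*\beta\rangle_{res} = \langle \alpha,\beta\rangle_{res}$.

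The main obstacle is the cyclic compatibility-with-composition statement for the $\widetilde{F}_*$, since $F'_*$ is built from the three summands $F1+F2+F3$ of Definition \ref{defn:F'} and one must check that their combinatorics and signs assemble correctly when $\widetilde{F}_*$ is composed with other $\widetilde{G}_*$'s. This is pure sign bookkeeping, most cleanly handled via the graphical notation of Appendix \ref{app:signs}: one draws the tree-level contributions of $\widetilde{F}_* \circ (\widetilde{F_1}_* \otimes \widetilde{F_2}_* \otimes \widetilde{F_3}_*)$ and matches them termwise with those of $\widetilde{H}_*$, the $u^0$ part being Lemma \ref{lem:compatcomp} and the $u^1$ part reducing to the assertion that the three placements of $e^+$ and of the base-point in $F'_*$ combine associatively.
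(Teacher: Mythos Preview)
Your proposal is correct and takes essentially the same approach as the paper, whose proof consists of the single sentence ``The proof follows that of Proposition \ref{prop:mukaimorita} closely.'' You have simply unpacked what that sentence means.

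One small simplification: you do not need a full cyclic analogue of Lemma \ref{lem:compatcomp} for arbitrary compositions of multifunctors. In the application, the inner factors $\mathrm{Id}$, $F$, $F^{op}$, and $A_\infty((F\otimes F)^*)$ are ordinary $A_\infty$ \emph{functors}, and by Lemma \ref{lem:FHC} the maps they induce on the non-unital Hochschild complex commute with both $b$ and $B$, hence extend $\BbK\power{u}$-linearly without any $u$-correction. So the only thing to check is that $\widetilde{F}_* \circ (G_{1*}\otimes G_{2*}\otimes G_{3*}) = \widetilde{(F\circ(G_1,G_2,G_3))}_*$ when the $G_i$ are $1$-functors; this is immediate from the formulae in Definition \ref{defn:F'}, since pre-composing with $G_{i*}$ just feeds the inputs through $G_i$ before they reach the tri-functor vertices, and the $e^+$ and $\mu^2_\EuD$ in $F'_*$ live in the target and are unaffected. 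The harder general ``$u^1$ part'' you flag is therefore not actually needed.
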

\begin{proof}
The proof follows that of Proposition \ref{prop:mukaimorita} closely.
\end{proof}

\begin{prop}
If $\EuC$ is a $\mathsf{dg}$ category, then our definition of the higher residue pairing on $\HC^-_\bullet(A_\infty(\EuC)) \cong \HC^-_\bullet(\EuC)$ (i.e., Definition \ref{defn:ainfhigherres}) coincides with that given by Shklyarov (i.e., Definition \ref{defn:dghigherres}).
\end{prop}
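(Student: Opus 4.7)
The plan is to imitate, essentially verbatim, the proof of the analogous statement for the Mukai pairing at the end of the previous subsection, replacing each ingredient by its cyclic counterpart.

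First, I would trace through the two definitions. Shklyarov's definition \ref{defn:dghigherres} is
\[ \langle \alpha, \beta\rangle_{res}^{\mathsf{dg}} = \tildeint \widetilde{\wedge}^{\mathsf{dg}}_{\EuC_\Delta}(\alpha,\beta^{\vee,\mathsf{dg}}),\]
while Definition \ref{defn:ainfhigherres} is
\[ \langle \alpha, \beta\rangle_{res}^{A_\infty} = -\tildeint \widetilde{\wedge}^{A_\infty}_{A_\infty(\EuC)_\Delta}(\alpha,\beta^{\vee,A_\infty}).\]
The problem is therefore to identify the two right-hand sides under the tautological isomorphism $\HC^-_\bullet(\EuC) \cong \HC^-_\bullet(A_\infty(\EuC))$.

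The main step is to compare the $\widetilde{\wedge}_{\EuC_\Delta}$ maps. This is exactly what Lemma \ref{lem:dgainfcyccompat} accomplishes: it says that the diagram relating $\widetilde{\wedge}^{\mathsf{dg}}_\EuP$ and $\widetilde{\wedge}^{A_\infty}_{A_\infty(\EuP)}$ commutes, for any $\mathsf{dg}$ bimodule $\EuP$. Applied to $\EuP = \EuC_\Delta$, and combined with Remark \ref{rmk:diags} (which identifies $A_\infty(\EuC_\Delta)$ with the $A_\infty$ diagonal bimodule $A_\infty(\EuC)_\Delta$), this reduces the problem to comparing the other two ingredients: the cyclic Feigin-Losev-Shoikhet trace $\tildeint$, and the sesquilinear involution $\vee$.

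For $\tildeint$, the cyclic Feigin-Losev-Shoikhet trace is defined in a manner manifestly compatible with the tautological identification of Hochschild (hence cyclic) chain complexes of a $\mathsf{dg}$ category and its associated $A_\infty$ category (Remarks \ref{rmk:tautid} and \ref{rmk:dgainfcyc}), so the two versions of $\tildeint$ agree tautologically. For $\vee$, Remark \ref{rmk:veesign} records that the $\mathsf{dg}$ and $A_\infty$ versions of $\vee$ (conjugated through the isomorphism induced by \eqref{eqn:difopps}) differ by an overall sign of $-1$; the same proof goes through on the cyclic complex, since $\vee$ intertwines $B$ with itself (Remark \ref{rmk:cycopp}) and the sign is independent of cyclic degree. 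By sesquilinearity of $\widetilde{\wedge}$ in the second input, this sign of $-1$ on $\beta^\vee$ is exactly what is absorbed by the minus sign in \eqref{eqn:resainf}, yielding the equality of the two pairings. There is no real obstacle; the only subtlety is simply to verify that the sign bookkeeping in Remark \ref{rmk:veesign} carries over unchanged to the cyclic setting, which it does because the extra $e^+$ inserted by $B$ has degree $0$ and the sesquilinear $\BbK\power{u}$-structures on both sides are compatible by construction.
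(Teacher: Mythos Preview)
Your proposal is correct and follows essentially the same approach as the paper's own proof, which simply says the result ``follows immediately from Lemma \ref{lem:dgainfcyccompat}, together with Remarks \ref{rmk:diags} and \ref{rmk:veesign}.'' You have spelled out in more detail what the paper leaves implicit, including the compatibility of the cyclic Feigin--Losev--Shoikhet trace and the passage of the sign discrepancy in $\vee$ to the cyclic setting, but the ingredients and logic are the same.
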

\begin{proof}
Follows immediately from Lemma \ref{lem:dgainfcyccompat}, together with Remarks \ref{rmk:diags} and \ref{rmk:veesign}.
\end{proof}

\begin{prop}
\label{prop:higherresform}
If $\EuC$ is an $A_\infty$ category with finite-dimensional $hom$-spaces (i.e., finite-dimensional on the cochain level, not just on the cohomology level), then the higher residue pairing is induced by a chain-level map, given by extending the formula \eqref{eqn:mukform} $\BbK\power{u}$-sesquilinearly.
\end{prop}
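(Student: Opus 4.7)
The proof should parallel that of Proposition \ref{prop:mukform} closely, with the bookkeeping for the $u$-variable handled by the structure of $\widetilde{F}_* = F_* + uF'_*$ from Lemma \ref{lem:multifuncyc}. Under the finite-dimensionality hypothesis, the bimodule $\EuC_\Delta$ takes values in $\mathsf{fin}\BbK$, so $\widetilde{\wedge}_{\EuC_\Delta}(\alpha,\beta^\vee)$ lies in $CC^-_\bullet(\mathsf{fin}\BbK)$. By the $\BbK\power{u}$-linear analogue of Lemma \ref{lem:flsstr} noted immediately after the definition of $\widetilde{\mathsf{Str}}$, the cyclic Feigin-Losev-Shoikhet trace $\tildeint$ can then be computed as $\widetilde{\mathsf{Str}}$, so the first step is to replace $\tildeint$ by $\widetilde{\mathsf{Str}}$ in \eqref{eqn:resainf}.

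Next I would examine which summands in $\widetilde{F}_*(\widetilde{\mathsf{Ch}}(\EuC_\Delta),\alpha,\beta^\vee)$ survive $\widetilde{\mathsf{Str}}$. Since $\mathsf{Str}$ annihilates any Hochschild chain of length $\geq 1$ (Lemma \ref{lem:flsstr}), only the length-$0$ component contributes. Inspecting Definition \ref{defn:F'}, every summand of $F1$, $F2$, $F3$ has a bar-decorated part containing at least one $F^*(\ldots)$ term, so $uF'_*$ produces only chains of length $\geq 1$ and contributes zero after $\widetilde{\mathsf{Str}}$. Thus the calculation reduces to the length-$0$ part of $F_*(\widetilde{\mathsf{Ch}}(\EuC_\Delta),\alpha,\beta^\vee)$, i.e.\ the terms of the sum in Lemma \ref{lem:multifunhoch} in which the entire tensor of inputs $a_i$, $b_j$ is absorbed into the chain of $\mu^2_\EuD$'s at the head of the expression, with no $F^*(\ldots)$ factors placed inside the square brackets.

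Now I invoke Definition \ref{defn:cycchern}: since $A_\infty([\EuC,\EuC])$ is strictly unital (it comes from a dg category), the cyclic Chern character $\widetilde{\mathsf{Ch}}(\EuC_\Delta)$ is represented in the unital cyclic complex by the length-$0$ cycle $e_{\EuC_\Delta}$. Because the tri-functor $F$ of Lemma \ref{lem:trifun} is strictly unital in its first entry, substituting this representative collapses $F$ to its bi-functor specialization $(\alpha,\beta^\vee)\mapsto F^{0;\ast;\ast}(\,;\alpha;\beta^\vee)$, whose explicit formula was already used in the proof of Proposition \ref{prop:mukform}. Computing $\widetilde{\mathsf{Str}}$ of the length-$0$ piece then yields precisely the trace expression \eqref{eqn:mukform}, now interpreted $\BbK\power{u}$-sesquilinearly via the $\BbK\power{u}$-linearity of $\widetilde{F}_*$ and $\widetilde{\mathsf{Str}}$, together with the sesquilinearity of the involution $\beta\mapsto\beta^\vee$ from Remark \ref{rmk:sesquiop}. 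Combined with the overall minus sign from \eqref{eqn:resainf}, this matches the $\BbK\power{u}$-sesquilinear extension of \eqref{eqn:mukform}.

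The only nontrivial point is the sign matching in the length-$0$ piece, but this is identical to the verification already performed in Proposition \ref{prop:mukform}: the Koszul sign $\dagger$ in Lemma \ref{lem:multifunhoch} with $n=3$, combined with the sign $\dagger$ in the definition of $\beta^\vee$ (Definition \ref{defn:ccopp}) and the sign in $F^{0;s;t}$ from Lemma \ref{lem:trifun}, reproduces the sign in \eqref{eqn:mukform}. I expect this sign computation to be the only real obstacle, and it is handled most efficiently by the graphical notation of Appendix \ref{app:signs}, exactly as for the Mukai pairing.
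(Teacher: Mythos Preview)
Your proposal is correct and follows essentially the same approach as the paper: represent $\widetilde{\mathsf{Ch}}(\EuC_\Delta)$ by the strict unit $e_{\EuC_\Delta}$, split $\widetilde{F}_* = F_* + uF'_*$, observe that $F'_*$ never outputs a length-zero Hochschild chain so $\widetilde{\mathsf{Str}}\circ F'_* = 0$, and then appeal to the computation already done in Proposition~\ref{prop:mukform}. Your additional remarks on sesquilinearity and sign matching are accurate but more detailed than the paper, which simply says ``the proof now follows from that of Proposition~\ref{prop:mukform}.''
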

\begin{proof}
Observe that $[\EuC,\EuC]$ is strictly unital, so we have the explicit representing cycle $e_{\EuC_\Delta}$ for $\widetilde{\mathsf{Ch}}(\EuC_\Delta)$, as explained in Definition \ref{defn:cycchern}.
We now observe that $\widetilde{F}_* = F_* + uF'_*$, so on the chain level we have
\[ \langle \alpha, \beta \rangle_{res}  =   \mathsf{Str} \left(F_*(e_{\EuC_\Delta},\alpha,\beta^\vee)\right) + u \mathsf{Str} \left(F'_*(e_{\EuC_{\Delta}},\alpha,\beta^\vee)\right). \]
Now observe that $F_*'$ never outputs a term of length $1$ (see Definition \ref{defn:F'}), so $\mathsf{Str} \circ F'_* = 0$ on the chain level. 
The proof now follows from that of Proposition \ref{prop:mukform}. 
\end{proof}

\subsection{The higher residue pairing is covariantly constant}

\begin{defn}
Let $\EuC$ be a $\BbK$-linear $A_\infty$ category with finite-dimensional $hom$-spaces, together with a choice of basis for each $hom$-space (which we recall is necessary to make sense of expressions like `$v(\mu^*)$'). 
For each derivation $v \in \deriv_\Bbbk \BbK$, we define a $\BbK\power{u}$-sesquilinear map
\[ H: CC^-_\bullet(\EuC) \times CC^-_\bullet(\EuC) \to \BbK\power{u}\]
as a sum of three terms: $H:= H1+H2+H3$.
For $\alpha= a_0[a_1|\ldots|a_s]$ and $\beta = b_0[b_1|\ldots|b_t]$, we define
\begin{multline}
H1(\alpha,\beta) := \\
\sum_{j,k,\ell,m}\mathsf{tr} \left(c \mapsto (-1)^\dagger \mu^*(a_0,\ldots,v(\mu^*)(a_{j+1},\ldots),\overbrace{a_{k+1},\ldots,a_\ell},\mu^*(a_{\ell+1},\ldots,a_s,c,\overbrace{b_0,\ldots,b_m}),b_{m+1},\ldots,b_t) \right),
\end{multline}
\[\text{where}\quad \dagger:= \sum_{i=j+1}^\ell |a_i|' + |c| \cdot |\beta|;\]
\begin{multline}
H2(\alpha,\beta) = \\
\sum_{j,k,\ell,m}\mathsf{tr} \left(c \mapsto (-1)^\dagger \mu^*(\overbrace{a_0,\ldots,a_j},\mu^*(a_{j+1},\ldots,a_s,c,b_0,\ldots,v(\mu^*)(b_{k+1},\ldots),\overbrace{b_{\ell+1},\ldots}),b_{m+1},\ldots,b_t) \right),
\end{multline}
\[ \text{where}\quad \dagger := 1+\sum_{i=j+1}^s |a_i|' +  \sum_{i=0}^k |b_i|' + |c| \cdot |\beta|';\quad\text{and}\]
\begin{multline}
H3(\alpha,\beta) := \\
\sum_{j,k}\mathsf{tr} \left(c \mapsto (-1)^\dagger v(\mu^*)(a_0,\ldots,\mu^*(\overbrace{a_{j+1},\ldots},\mu^*(a_{k+1},\ldots,a_s,c,\overbrace{b_0,\ldots}),b_{\ell+1},\ldots),b_{m+1},\ldots,b_t) \right),
\end{multline}
\[ \text{where}\quad \dagger := \sum_{i=j+1}^k |a_i|' + |c| \cdot |\beta|.\]
\end{defn}

\begin{lem}
\label{lem:higherreshomflat}
We have
\[ \langle u\cdot \nabla_v \alpha,\beta \rangle_{res} - \langle \alpha, u\cdot \nabla_v \beta \rangle_{res} = u\cdot v \langle \alpha,\beta \rangle_{res} + H \circ (b + uB)\]
as $\BbK\power{u}$-sesquilinear maps from $CC_\bullet^-(\EuC) \times CC_\bullet^-(\EuC) \to \BbK\power{u}$. 
\end{lem}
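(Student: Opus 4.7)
The plan is to verify the identity on the chain level, using the explicit formula \eqref{eqn:mukform} for the higher residue pairing (which is valid by Proposition \ref{prop:higherresform} under the finite-dimensionality assumption built into the definition of $H$). Because the pairing takes values in $\BbK\power{u}$, I would exploit the fact that, once we have chosen a basis for the $hom$-spaces of $\EuC$, every ingredient of \eqref{eqn:mukform} is built from operators $\mu^*$ whose matrix entries lie in $\BbK$; this makes the Leibniz rule for $v$ apply in a direct and bookkeeping-friendly way.

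First, I would compute $u\cdot v\langle\alpha,\beta\rangle_{res}$ using the Leibniz rule applied to \eqref{eqn:mukform}. This produces three kinds of terms: those in which $v$ hits one of the inputs $a_i$ or $b_j$, and those in which $v$ hits one of the $\mu^*$'s inside the formula. The former immediately assemble to $u\langle v(\alpha),\beta\rangle_{res} + u\langle\alpha,v(\beta)\rangle_{res}$, which matches the first term of $u\nabla_v$ in Definition \ref{defn:conn}. The latter is an explicit sum of trace expressions with one occurrence of $v(\mu^*)$ somewhere among the $\mu^*$'s in \eqref{eqn:mukform}.

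Second, I would expand $\langle u\nabla_v\alpha,\beta\rangle_{res} + \langle\alpha,u\nabla_v\beta\rangle_{res}$ using $u\nabla_v\alpha = u\,v(\alpha) - b^{1|1}(v(\mu^*)|\alpha) - u\,B^{1|1}(v(\mu^*)|\alpha)$. Subtracting the classical-derivative part, the identity to be proved reduces to an explicit chain-level equality
\[ \langle b^{1|1}(v(\mu^*)|\alpha),\beta\rangle_{res} + \langle\alpha,b^{1|1}(v(\mu^*)|\beta)\rangle_{res} + u\,\bigl(\text{analogous } B^{1|1}\text{-terms}\bigr) + u\cdot(\text{$v(\mu^*)$-terms from Leibniz}) = -H\circ(b+uB). \]
I would then separate this by $u$-degree. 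The $u^0$ component involves the $b^{1|1}$-insertions and is matched by $-H\circ b$; the $u^1$ component involves the $B^{1|1}$-insertions together with the outer-$\mu^*$ Leibniz contribution, and is matched by $-u\cdot H\circ B$. The three summands $H1,H2,H3$ are tailored, respectively, to the configurations where $v(\mu^*)$ lies inside the $\alpha$-string, inside the $\beta$-string, or replaces the outer $\mu^*$ in \eqref{eqn:mukform}; this is why $H = H1+H2+H3$ is the right ansatz.

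Third, the actual verification reduces to expanding $H\circ b$ and $H\circ B$ and applying the $A_\infty$ relations $\sum(-1)^{\dagger}\mu^*(\ldots,\mu^*(\ldots),\ldots)=0$ inside each trace. Each composition $\mu^*\mu^*$ that carries \emph{no} $v(\mu^*)$-marker appears twice with opposite signs (coming from two adjacent positions in the sums defining $H$) and cancels; the surviving terms are precisely those with a $v(\mu^*)$-insertion, together with the boundary contributions that recombine into the $b^{1|1}$- and $B^{1|1}$-pairings appearing on the left. The main obstacle is the sign and combinatorial bookkeeping: $H1,H2,H3$ are each quadruple sums over positions, and $b+uB$ generates many further sums, so one must check that all unwanted configurations cancel exactly. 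I would carry this out using the graphical notation of Appendix \ref{app:signs}, in which each $\mu^*$, each $v(\mu^*)$, and the outer cyclic trace become nodes of a planar diagram; in that language the cancellation becomes the statement that each unmarked $\mu^*\mu^*$ edge admits two compatible $A_\infty$ resolutions entering $H\circ(b+uB)$ with opposite orientations, while the marked edges survive and assemble into the required expression.
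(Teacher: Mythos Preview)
Your approach is essentially what the paper intends: the paper omits the proof of this lemma entirely, relying on the reader to carry out the chain-level check via the explicit formula of Proposition \ref{prop:higherresform} and the graphical sign calculus of Appendix \ref{app:signs}. Your decomposition into Leibniz-on-inputs, Leibniz-on-$\mu^*$, and the $b^{1|1}$/$B^{1|1}$ contributions, matched against $H1,H2,H3$, is the right organizing principle.

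One point you should tighten: you treat the derivative terms as if $\langle \alpha, u\,v(\beta)\rangle_{res} = u\langle\alpha,v(\beta)\rangle_{res}$, but the pairing is \emph{sesquilinear} in the second slot, so in fact $\langle \alpha, u\,\gamma\rangle_{res} = -u\langle\alpha,\gamma\rangle_{res}$. This sign flip affects both the $u\,v(\beta)$ and the $u\,B^{1|1}(v(\mu^*)|\beta)$ contributions on the left-hand side, and it is precisely this flip that makes the two sides match (for instance, it is why the $v(\alpha)$ and $v(\beta)$ terms from the Leibniz expansion of $u\,v\langle\alpha,\beta\rangle_{res}$ can be reconciled with the $\langle u\,v(\alpha),\beta\rangle + \langle\alpha,u\,v(\beta)\rangle$ terms on the left). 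Relatedly, your displayed reduction has $-H\circ(b+uB)$ on the right where the lemma has $+H\circ(b+uB)$; once you track the sesquilinearity correctly the signs will line up. These are bookkeeping corrections, not a change of strategy.
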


\begin{cor}
\label{cor:hrescov}
Let $\EuC$ be a proper $\BbK$-linear $A_\infty$ category. 
Then the higher residue pairing is covariantly constant with respect to the Getzler--Gauss--Manin connection: i.e., for any $v \in \deriv_\Bbbk \BbK$, we have
\[ \langle u \cdot \nabla_v \alpha,\beta \rangle_{res} - \langle \alpha, u \cdot \nabla_v \beta \rangle_{res} = u \cdot v \langle \alpha,\beta \rangle_{res}\]
as $\BbK\power{u}$-sesquilinear maps from $\HC^-_\bullet(\EuC) \times \HC^-_\bullet(\EuC) \to \BbK\power{u}$.
\end{cor}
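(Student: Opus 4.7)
The plan is to deduce the corollary from Lemma \ref{lem:higherreshomflat}, after first reducing to the setting of $A_\infty$ categories whose hom-spaces are finite-dimensional on the chain level.

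First I would perform that reduction. Since $\EuC$ is proper, its cohomological hom-spaces are finite-dimensional, so the homological perturbation lemma produces a minimal $A_\infty$ category $\EuC_{\min}$ (with $\mu^1_{\EuC_{\min}} = 0$ and chain-level hom-spaces $H^\bullet(hom^\bullet_\EuC(X,Y))$, which are therefore finite-dimensional) together with a quasi-equivalence $F\colon \EuC_{\min} \to \EuC$. A quasi-equivalence is in particular a Morita equivalence (Theorem \ref{thm:moritasplit}), so by Proposition \ref{prop:higherresMor} and Corollary \ref{cor:cycGMmor} the induced map $F_*\colon \HC_\bullet^-(\EuC_{\min}) \to \HC_\bullet^-(\EuC)$ is an isomorphism that intertwines both the higher residue pairing and the Getzler-Gauss-Manin connection. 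It therefore suffices to prove the covariant-constancy identity for $\EuC_{\min}$.

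Next I would choose a $\BbK$-basis for each of the (now chain-level finite-dimensional) morphism spaces of $\EuC_{\min}$, so that the chain-level formula of Proposition \ref{prop:higherresform} for the higher residue pairing and the formula of Definition \ref{defn:conn} for the Getzler-Gauss-Manin connection are simultaneously available. Lemma \ref{lem:higherreshomflat} then gives the chain-level identity
\[ \langle u \nabla_v \alpha,\beta \rangle_{res} + \langle \alpha, u \nabla_v \beta \rangle_{res} - u\cdot v \langle \alpha,\beta \rangle_{res} \;=\; H\bigl((b+uB)\alpha,\beta\bigr) \pm H\bigl(\alpha,(b+uB)\beta\bigr), \]
where `$H\circ(b+uB)$' is interpreted as the total differential of the $\BbK\power{u}$-valued sesquilinear form $H = H1+H2+H3$ in its two inputs, the target $\BbK\power{u}$ carrying the zero differential. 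When $\alpha$ and $\beta$ are cocycles representing classes in $\HC_\bullet^-(\EuC_{\min})$, the right-hand side vanishes, so the desired identity descends to cohomology, and then transports back to $\EuC$ via $F_*$.

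The bulk of the work is concentrated in Lemma \ref{lem:higherreshomflat}, which we are permitted to assume but which carries the genuine content. Proving it amounts to expanding both sides of the chain-level identity using the explicit formulas of Proposition \ref{prop:higherresform}, Definition \ref{defn:conn}, and the defining formulas for $H1,H2,H3$, then matching the results term by term. Every term on either side is a Mukai-type supertrace of a long composition of $\mu^*$'s with at most one $v(\mu^*)$ inserted; cancellations come from the $A_\infty$ relations, cyclic invariance of the supertrace, and the identity obtained by applying the derivation $v$ to $\mu^*\circ \mu^* = 0$ (which produces the `dangling $v(\mu^*)$' terms needed to turn $uv\langle\alpha,\beta\rangle_{res}$ into the $\langle u\nabla_v\alpha,\beta\rangle_{res} + \langle\alpha,u\nabla_v\beta\rangle_{res}$ combination). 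The sign bookkeeping is intricate and is handled most efficiently in the graphical notation of Appendix \ref{app:signs}; this combinatorial verification is the only non-formal ingredient in the overall argument.
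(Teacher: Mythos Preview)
Your proposal is correct and follows essentially the same route as the paper: reduce via the homological perturbation lemma to a minimal model with finite-dimensional hom-spaces, transport the connection and pairing along the quasi-equivalence using Theorem~\ref{thm:GMMorita} and Proposition~\ref{prop:higherresMor}, and then invoke Lemma~\ref{lem:higherreshomflat}. Your additional sketch of how Lemma~\ref{lem:higherreshomflat} itself is verified is accurate but not part of the corollary's proof proper.
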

\begin{proof}
By the homological perturbation lemma, any $A_\infty$ category $\EuC$ is quasi-isomorphic to a minimal $A_\infty$ category $\EuC'$ (i.e., one with $\mu^1 = 0$). 
We have $\HC^-_\bullet(\EuC) \cong \HC^-_\bullet(\EuC')$, and the isomorphism respects connections (Theorem \ref{thm:GMMorita}) and higher residue pairings (Proposition \ref{prop:higherresMor}), so it suffices to prove the result for $\EuC'$. 
Because $\EuC$ is proper, $\EuC'$ will have finite-dimensional $hom$-spaces, so its higher residue pairing is covariantly constant by Lemma \ref{lem:higherreshomflat}.
\end{proof}

\subsection{Symmetry}\label{subsec:sym}

Let $\EuP$ be a $(\EuC,\EuD)$ bimodule. 
We recall the definition of the \emph{shift}, $\EuP[1]$, which is a $(\EuC,\EuD)$ bimodule with $\EuP[1](X,Y) = \EuP(X,Y)[1]$ 
(see \cite[Equation (2.10)]{Seidel2008c}). 

\begin{lem}
\label{lem:shiftpair}
Let $\EuP$ be a proper $(\EuC,\EuD)$ bimodule. 
Then 
\[ \int \wedge_{\EuP[1]}(\alpha,\beta) = -\int \wedge_{\EuP}(\alpha,\beta).\]
\end{lem}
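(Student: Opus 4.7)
The strategy is to factor the shift operation $\EuP\mapsto\EuP[1]$ through the shift autoequivalence $[1]$ of $\modules\BbK$, and then compute the effect of this autoequivalence on the Feigin-Losev-Shoikhet trace $\int$. The essential point is that $\int\circ[1]_* = -\int$ on $\HH_\bullet(\perfdg\BbK)$, which reflects the fact that the supertrace of $\mathrm{id}_V$ changes sign when $V$ is shifted.

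The first step is to verify the compatibility
\[
F\circ(\Sigma\times\mathrm{id}\times\mathrm{id}) \;=\; [1]\circ F
\]
of $A_\infty$ tri-functors from $A_\infty([\EuC,\EuD])\times\EuC\times\EuD^{op}$ to $A_\infty(\modules\BbK)$, where $\Sigma$ is the strict shift autoequivalence of $[\EuC,\EuD]$ sending $\EuP$ to $\EuP[1]$, $[1]$ is the strict shift autoequivalence of $\modules\BbK$, and $F$ is the tri-functor of Lemma \ref{lem:trifun}. This is a direct unwinding of the defining formulae for $F$ and the sign conventions for the shift of bimodules. Combining this compatibility with Lemma \ref{lem:compatcomp}, and noting that $\Sigma$ sends the unit $e_\EuP$ to $e_{\EuP[1]}$ and hence $\Sigma_*\mathsf{Ch}(\EuP)=\mathsf{Ch}(\EuP[1])$, we obtain the identity
\[
\wedge_{\EuP[1]}(\alpha,\beta) \;=\; F_*(\mathsf{Ch}(\EuP[1]),\alpha,\beta) \;=\; [1]_*\,\wedge_\EuP(\alpha,\beta)
\]
in $\HH_\bullet(A_\infty(\modules\BbK))$.

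The second step is to compute $\int\circ[1]_*$ on $\HH_\bullet(\perfdg\BbK)$. Since this Hochschild homology is a one-dimensional $\BbK$-vector space generated by $\mathsf{Ch}(\BbK[0])$ with $\int\mathsf{Ch}(\BbK[0])=1$, and since $[1]_*\mathsf{Ch}(\BbK[0])=\mathsf{Ch}(\BbK[1])$, it suffices to compute $\int\mathsf{Ch}(\BbK[1])$. By Lemma \ref{lem:flsstr} this equals $\mathsf{Str}(e_{\BbK[1]})=\mathsf{str}_{\BbK[1]}(\mathrm{id})=-1$, since $\BbK[1]$ is one-dimensional sitting in odd degree. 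Because $\EuP$ is proper, the image of $\wedge_\EuP$ lies in $\HH_\bullet(\perfdg\BbK)\subset\HH_\bullet(\modules\BbK)$, so we conclude $\int\wedge_{\EuP[1]}(\alpha,\beta) = -\int\wedge_\EuP(\alpha,\beta)$.

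The main obstacle will be the sign-bookkeeping in the compatibility $F\circ(\Sigma\times\mathrm{id}\times\mathrm{id})=[1]\circ F$, where one has to check that the Koszul signs built into $F$ (see Lemma \ref{lem:trifun}), into the shift of $A_\infty$ bimodules, and into the shift functor on $\modules\BbK$ all cancel consistently; the graphical notation of Appendix \ref{app:signs} promised by the author should render this verification routine.
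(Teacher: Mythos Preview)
Your proposal is correct and follows essentially the same strategy as the paper: factor the bimodule shift through the shift autoequivalence on the target, then show that the shift acts as $-1$ on the Feigin--Losev--Shoikhet trace. The only difference is that the paper first reduces to the $\mathsf{dg}$ setting (so that $\EuP[1]$ is literally the composite $S\circ\EuP$ of $\mathsf{dg}$ functors, making your compatibility step trivial), and then checks $\mathsf{Str}(S_*\alpha)=-\mathsf{Str}(\alpha)$ directly on $\mathsf{fin}\,\BbK$ rather than via one-dimensionality; your direct $A_\infty$ route via the tri-functor $F$ and Lemma~\ref{lem:compatcomp} works just as well, at the cost of the sign check you flag.
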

\begin{proof}
We may assume that $\EuC$ and $\EuD$ are $\mathsf{dg}$ categories, and $\EuP$ a proper $\mathsf{dg}$ bimodule.
We recall that $\EuP$ corresponds to a $\mathsf{dg}$ functor $\EuP: \EuC \otimes \EuD^{op} \to \perfdg \BbK$, and that Shklyarov \cite{Shklyarov2012} defines $\wedge_\EuP$ to be the map induced by this functor on Hochschild homology, composed with the K\"{u}nneth isomorphism. 
It is clear that $\EuP[1]$ corresponds to the composition of $\mathsf{dg}$ functors $S \circ \EuP$, where $S: \perfdg \BbK \to \perfdg \BbK$ is the shift functor. 
It now suffices to check that $\int S_*(\alpha) = -\int\alpha$, where $S_*: \HH_\bullet(\perfdg \BbK) \to \HH_\bullet(\perfdg \BbK)$ is the map induced by the functor $S$. 
Because the inclusion $\fink \BbK \hookrightarrow \perfdg \BbK$ is a quasi-equivalence, it suffices by Lemma \ref{lem:flsstr} to prove that $\mathsf{Str} (S_*(\alpha)) = -\mathsf{Str}(\alpha)$ on $\HH_\bullet(\fink \BbK)$. 
This is clear from the definition of the supertrace.
\end{proof}

We also recall the \emph{linear dual} bimodule, $\EuP^\vee$, which is a $(\EuD,\EuC)$ bimodule with  $\EuP^\vee(Y,X) = \hom(\EuP(X,Y),\BbK)$ (see, e.g., \cite[Equation (2.11)]{Seidel:II}).

\begin{lem}
\label{lem:dualpair}
Let $\EuP$ be a proper $(\EuC,\EuD)$ bimodule. 
Then
\[ \int \wedge_{\EuP^\vee}(\alpha,\beta) = (-1)^{|\alpha| \cdot |\beta|} \int \wedge_{\EuP}(\beta^\vee,\alpha^\vee).\]
\end{lem}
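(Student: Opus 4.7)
The plan is to reduce to the $\mathsf{dg}$ setting and then exploit the fact that linear duality of bimodules translates, under Shklyarov's chain-level formula \eqref{eqn:Shkform}, into linear duality of the $\BbK$-linear endomorphisms $L(c)$ and $R(d)$ of the objects $\EuP(X,Y) \in \perfdg \BbK$. Since by Lemma \ref{lem:dgainfcompat} the pairing $\wedge_{\EuP^\vee}$ for an $A_\infty$ bimodule is compatible (via the tautological identifications) with the corresponding pairing for a $\mathsf{dg}$ bimodule, it is enough to establish the statement when $\EuC$, $\EuD$, and $\EuP$ are $\mathsf{dg}$.

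The first key step is to identify how the operations transform under $\EuP \mapsto \EuP^\vee$. Unpacking the definition of the linear dual bimodule, one checks that for any $c \in hom^\bullet_\EuC(X_1,X_2)$ and $d \in hom^\bullet_\EuD(Y_1,Y_2)$, the maps
\[ L_{\EuP^\vee}(d): \EuP^\vee(Y_2,X) \to \EuP^\vee(Y_1,X), \qquad R_{\EuP^\vee}(c): \EuP^\vee(Y,X_1) \to \EuP^\vee(Y,X_2) \]
are, up to an explicit Koszul sign, the $\BbK$-linear duals of $R_\EuP(d)$ and $L_\EuP(c)$ respectively. Equivalently, the $\mathsf{dg}$ functor $\EuP^\vee: \EuD \otimes \EuC^{op} \to \perfdg \BbK$ factors as the composite of the `swap' isomorphism $\EuD \otimes \EuC^{op} \cong (\EuC \otimes \EuD^{op})^{op}$, the opposite of the functor $\EuP$, and the $\mathsf{dg}$ equivalence $\perfdg \BbK^{op} \to \perfdg \BbK$ given by linear duality.

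The second step is to plug this into Shklyarov's formula \eqref{eqn:Shkform}. After substitution, the chain $\wedge_{\EuP^\vee}(\alpha,\beta)$ becomes (up to sign) the chain in $C_\bullet(\mathsf{fin} \BbK)$ obtained from $\wedge_\EuP(\beta^\vee, \alpha^\vee)$ by reversing every sequence of operations and taking the linear dual of each operation. Then, using the identity $\mathsf{str}(f) = \mathsf{str}(f^\vee)$ for endomorphisms of finite-dimensional graded $\BbK$-vector spaces together with cyclic invariance of the supertrace, Lemma \ref{lem:flsstr} lets us convert $\int \wedge_{\EuP^\vee}(\alpha,\beta)$ into $\int \wedge_\EuP(\beta^\vee,\alpha^\vee)$.

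The main obstacle is the careful accounting of signs. There are four places where signs arise: Shklyarov's formula \eqref{eqn:Shkform} (which contains a Koszul shuffle sign and the prefix $(-1)^{|b_0|\cdot(|a_1|'+\cdots+|a_s|')}$), the signs in the maps $\alpha \mapsto \alpha^\vee$ and $\beta \mapsto \beta^\vee$ of Definition \ref{defn:ccopp}, the sign relating $L_{\EuP^\vee}(d)$ and $R_\EuP(d)^\vee$ (and similarly for $R_{\EuP^\vee}$ and $L_\EuP^\vee$) arising from the Koszul signs implicit in the definition of the dual bimodule structure, and the Koszul sign produced by reversing the order of a shuffled sequence of operations. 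The interchange of the two arguments $\alpha$ and $\beta$ accounts for the factor $(-1)^{|\alpha|\cdot|\beta|}$, while the remaining signs should conspire to cancel; the cleanest way to verify this is via the graphical bookkeeping of Appendix \ref{app:signs}.
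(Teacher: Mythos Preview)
Your approach is correct in outline and shares its starting point with the paper: both reduce to the $\mathsf{dg}$ setting and both identify the key factorization of the functor $\EuP^\vee : \EuD \otimes \EuC^{op} \to \perfdg \BbK$ as the composite of the swap isomorphism, $\EuP^{op}$, and the duality functor $\mathsf{dual}: (\perfdg \BbK)^{op} \to \perfdg \BbK$. Where you diverge is in how you exploit this factorization.

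The paper proceeds functorially, breaking the argument into four clean pieces that track the factorization through Hochschild homology one step at a time: (i) the factorization itself; (ii) the identity $(F^{op})_*(\alpha^\vee) = F_*(\alpha)^\vee$ for any $\mathsf{dg}$ functor $F$; (iii) compatibility of the K\"unneth map with $\vee$ and the swap; (iv) the identity $\int \mathsf{dual}_*(\alpha^\vee) = \int \alpha$, which reduces via Lemma \ref{lem:flsstr} to $\mathsf{str}(f^\vee) = \mathsf{str}(f)$. This organization isolates the sign contributions so that each piece can be checked independently, and it works directly at the level of $\HH_\bullet(\perfdg \BbK)$ without requiring $\EuP(X,Y)$ to be finite-dimensional on the chain level.

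Your route is instead to substitute directly into Shklyarov's explicit formula \eqref{eqn:Shkform} and manipulate. This also works, but two small points: first, the appeal to ``cyclic invariance of the supertrace'' is a red herring, since $\mathsf{Str}$ annihilates all chains of length $\ge 1$ and the length-zero comparison only needs $\mathsf{str}(f^\vee) = \mathsf{str}(f)$ together with the commutation $L(a)R(b) = (-1)^{|a||b|}R(b)L(a)$; second, invoking $\mathsf{Str}$ on the chain level presupposes that $\EuP(X,Y) \in \mathsf{fin}\,\BbK$, whereas properness only gives finite-dimensional cohomology, so you should either reduce to a chain-level finite model or, as the paper does, argue on cohomology via the quasi-equivalence $\mathsf{fin}\,\BbK \hookrightarrow \perfdg \BbK$.
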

\begin{proof}
Once again, we may assume that $\EuC$, $\EuD$ and $\EuP$ are $\mathsf{dg}$, and regard $\EuP$ as a $\mathsf{dg}$ functor. 
The proof combines four pieces. First, let $\EuP^\vee$ denote the following composition of $\mathsf{dg}$ functors:
\[ \EuD \otimes \EuC^{op} \overset{\sim}{\longrightarrow} \left(\EuC \otimes \EuD^{op}\right)^{op} \overset{\EuP^{op}}{\longrightarrow} (\perfdg \BbK)^{op} \overset{\mathsf{dual}}{\longrightarrow} \perfdg \BbK,\]
where the first functor sends $c \otimes d \mapsto (-1)^{|c| \cdot |d|} d \otimes c$ and `$\mathsf{dual}$' denotes the $\mathsf{dg}$ functor that dualizes cochain complexes. 
One easily verifies that this is compatible with the $A_\infty$ definition, in the sense that $ A_\infty(\EuP)^\vee \cong A_\infty(\EuP^\vee)$.

Second, for any $\mathsf{dg}$ functor $F$, one easily checks that $(F^{op})_* (\alpha^\vee) = F_*(\alpha)^\vee$ (we apply this to $F = \EuP$).

Third, one checks that the following diagram commutes:
\[ \xymatrix{ CC_\bullet(\EuC) \otimes CC_\bullet(\EuD^{op}) \ar[r] \ar[d] & CC_\bullet(\EuC \otimes \EuD^{op}) \ar[d]\\
CC_\bullet(\EuD) \otimes CC_\bullet(\EuC^{op}) \ar[r] & CC_\bullet(\EuD \otimes \EuC^{op}),}\]
where the horizontal arrows are the K\"{u}nneth maps \cite[Section 2.4]{Shklyarov2012}, the left vertical arrow combines the isomorphism $CC_\bullet(\EuC) \cong CC_\bullet(\EuC^{op})$ of \cite[Equation (4.8)]{Shklyarov2012} (and similarly for $\EuD$) with the Koszul isomorphism $CC_\bullet(\EuC) \otimes CC_\bullet(\EuD^{op}) \cong  CC_\bullet(\EuD^{op}) \otimes CC_\bullet(\EuC)$,  and the right vertical arrow is induced by the isomorphism $CC_\bullet(\EuC \otimes \EuD^{op}) \cong CC_\bullet((\EuC \otimes \EuD^{op})^{op})$ composed with the map induced by the isomorphism of $\mathsf{dg}$ categories, $(\EuC \otimes \EuD^{op})^{op} \cong \EuC^{op} \otimes \EuD \cong \EuD \otimes \EuC^{op}$.

Fourth, one checks that $\int \mathsf{dual}_*(\alpha^\vee) = \int \alpha$. 
As in the proof of Lemma \ref{lem:shiftpair}, it suffices to prove that $\mathsf{Str}(\mathsf{dual}_*(\alpha^\vee)) = \mathsf{Str}(\alpha)$; and this reduces to the obvious fact that the trace of the dual of a matrix coincides with the trace of the original matrix. 
Combining the four pieces gives the result.
\end{proof}
 
We recall that an \emph{$n$-dimensional weak proper Calabi--Yau structure} on an $A_\infty$ category $\EuC$ is a quasi-isomorphism $\EuC_\Delta \cong \EuC_\Delta^\vee[n]$ (see \cite[Section 12j]{Seidel2008}, as well as \cite{Tradler2008} and \cite[Section A.5]{Sheridan:Fano}, where it is called an `$\infty$-inner product'). 

\begin{lem}
\label{lem:nsym}
If $\EuC$ admits an $n$-dimensional weak proper Calabi--Yau structure, then the Mukai pairing on $\HH_\bullet(\EuC)$ satisfies:
\[\langle \alpha, \beta\rangle_{Muk} = (-1)^{n+|\alpha|\cdot|\beta|} \langle \beta, \alpha \rangle_{Muk}.\]
Similarly for the higher residue pairing.
\end{lem}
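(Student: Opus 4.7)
The plan is to unpack the definition of the Mukai pairing and push the Calabi-Yau quasi-isomorphism through the two auxiliary lemmas already proved in this subsection.

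By Definition \ref{defn:mukai},
\[ \langle \alpha, \beta\rangle_{Muk} = -\int \wedge_{\EuC_\Delta}(\alpha,\beta^\vee).\]
First I would observe that $\wedge_{\EuP}$ depends only on the quasi-isomorphism class of $\EuP$: this is because, in Definition \ref{defn:wedgepainf}, it is built from $\widetilde{\mathsf{Ch}}(\EuP) \in \HH_0([\EuC,\EuC])$ together with the $A_\infty$ tri-functor $F$ of Lemma \ref{lem:trifun}, and quasi-isomorphic bimodules have the same Chern character (Lemma \ref{lem:chernquasi}). Therefore, the weak proper Calabi-Yau quasi-isomorphism $\EuC_\Delta \cong \EuC_\Delta^\vee[n]$ yields
\[ \wedge_{\EuC_\Delta}(\alpha,\beta^\vee) = \wedge_{\EuC_\Delta^\vee[n]}(\alpha,\beta^\vee).\]

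Next I would apply Lemma \ref{lem:shiftpair} $n$ times to pull the shift out, producing a sign $(-1)^n$, and then Lemma \ref{lem:dualpair} to move the dual off the bimodule and onto the arguments, producing a sign $(-1)^{|\alpha|\cdot|\beta^\vee|}$. Since $\vee$ is degree-preserving we have $|\beta^\vee| = |\beta|$, and since the isomorphism of Definition \ref{defn:ccopp} squares to the identity, $(\beta^\vee)^\vee = \beta$. Combining these ingredients,
\[ \int \wedge_{\EuC_\Delta}(\alpha,\beta^\vee) = (-1)^{n+|\alpha|\cdot|\beta|} \int \wedge_{\EuC_\Delta}(\beta,\alpha^\vee),\]
which, multiplied by $-1$ on both sides, gives the claimed symmetry of $\langle\cdot,\cdot\rangle_{Muk}$.

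For the higher residue pairing, I would run the identical argument, replacing $\wedge$ by $\widetilde{\wedge}$ (Definition \ref{defn:ainfcycwedge}), $\int$ by $\tildeint$, and $\mathsf{Ch}$ by $\widetilde{\mathsf{Ch}}$. The only step that requires genuine verification is the cyclic analogue of Lemmas \ref{lem:shiftpair} and \ref{lem:dualpair}; I expect this to be the main obstacle, but I anticipate both go through by essentially the same reasoning as in the Hochschild case. Indeed, the shift functor $S$ and the duality functor $\mathsf{dual}$ on $\perfdg \BbK$ act on $\widetilde{\mathsf{Str}}: CC_\bullet^-(\mathsf{fin}\BbK) \to \BbK\power{u}$ by the $\BbK\power{u}$-linear extension of their action on $\mathsf{Str}$, so the signs $(-1)$ and $(-1)^{|\alpha|\cdot|\beta|}$ survive unchanged after inserting powers of $u$. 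Combined with the manifest $\BbK\power{u}$-sesquilinearity of all the constructions and the fact that the K\"unneth map \eqref{eqn:kunndgcyc} extends \eqref{eqn:kunndg}, this gives the cyclic shift and duality identities, and the argument above then produces the desired symmetry of $\langle\cdot,\cdot\rangle_{res}$.
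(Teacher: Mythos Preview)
Your proof is correct and follows essentially the same route as the paper's: invoke the quasi-isomorphism invariance of $\wedge_\EuP$ via Lemma~\ref{lem:chernquasi}, then combine Lemmas~\ref{lem:shiftpair} and~\ref{lem:dualpair} with the Calabi--Yau quasi-isomorphism $\EuC_\Delta \cong \EuC_\Delta^\vee[n]$, and argue analogously for the cyclic version. The paper's proof is only a two-sentence pointer to the same lemmas; your write-up is simply a more explicit unpacking of that argument (minor slip: in the Mukai case the relevant class is $\mathsf{Ch}(\EuP)$, not $\widetilde{\mathsf{Ch}}(\EuP)$).
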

\begin{proof}
The result for the the Mukai pairing follows directly from Lemmas \ref{lem:shiftpair} and \ref{lem:dualpair}, and the fact that the pairing $\wedge_\EuP$ only depends on the quasi-isomorphism class of the bimodule $\EuP$. 
The proof for the higher residue pairing is analogous.
\end{proof}

Lemma \ref{lem:nsym} completes the proof of Theorem \ref{thm:main} \eqref{it:polprevshs}.

\subsection{Hodge-to-de Rham degeneration}

\begin{defn}
\label{defn:HdR}
Suppose that $\EuC$ is saturated. 
We say that $\EuC$ satisfies the \emph{degeneration hypothesis} if the spectral sequence \eqref{eqn:HdR} induced by the Hodge filtration on $CC_\bullet^-(\EuC)$ degenerates at the $E_1$ page. 
\end{defn}

\begin{rmk}
A conjecture of Kontsevich and Soibelman \cite[Conjecture 9.1.2]{Kontsevich2006a} asserts that all saturated $A_\infty$ categories $\EuC$ satisfy the degeneration hypothesis. 
It has been proved by Kaledin \cite{Kaledin2017} (see also \cite{Mathew2017}), in the case that $\EuC$ is $\Z$-graded.
\end{rmk}

\begin{thm}[= Theorem \ref{thm:main} \eqref{it:degen}]
\label{thm:degen}
If $\EuC$ is saturated, and satisfies the degeneration hypothesis, then the polarized pre-\vshs{} $(\HC_\bullet^-(\EuC),\nabla,\langle\cdot,\cdot\rangle_{res})$ of Theorem \ref{thm:main} \eqref{it:polprevshs} is a polarized \vshs.
\end{thm}
\begin{proof}
Any $A_\infty$ category is quasi-equivalent to a $\mathsf{dg}$ category, via the Yoneda embedding; so let us assume without loss of generality that $\EuC$ is $\mathsf{dg}$. 
Then $\HH_\bullet(\EuC)$ is finite-dimensional and the Mukai pairing is non-degenerate, by \cite[Theorem 1.4]{Shklyarov2012}; it follows that the polarization given by the higher residue pairing is non-degenerate. 

As an immediate consequence, the spectral sequence \eqref{eqn:HdR} induced by the Hodge filtration on any of $CC_\bullet^{+,-,\infty}(\EuC)$ is automatically bounded (by the degree bound on $\HH_\bullet$) and regular (by finite-dimensionality); because the Hodge filtration is complete and exhaustive, the complete convergence theorem \cite[Theorem 5.5.10]{Weibel1994}  implies that the spectral sequence converges to its cohomology.
Hence, $\HC_\bullet^-(\EuC)$ has finite rank.

Finally, if $\EuC$ satisfies the degeneration hypothesis, then it is clear that $\HC^-_\bullet(\EuC)$ is a free $\BbK\power{u}$-module. 
Thus we have verified all of the conditions of Definitions \ref{defn:vhs} and \ref{defn:polvhs}, so $(\HC_\bullet^-(\EuC),\nabla,\langle\cdot,\cdot\rangle_{res})$ is a polarized \vshs. 
\end{proof}

\appendix

\section{Morita equivalence}
\label{app:morita}

In this Appendix we provide proofs of some well-known results relating to Morita equivalence of $A_\infty$ categories. 

\begin{lem}
\label{lem:bimodsplit}
Let $\rho: \EuP \to \EuQ$ be a homomorphism of $(\EuC,\EuD)$ bimodules. 
Let $\EuX \subset Ob(\EuC)$ and $\EuY \subset Ob(\EuD)$ be subsets which split-generate $\EuC$ and $\EuD$ respectively. 
If the map
\begin{equation}
\label{eqn:rhomod} \rho^{0|1|0}: \EuP(X,Y) \to \EuQ(X,Y)
\end{equation}
is a quasi-isomorphism for all $(X,Y) \in \EuX \times \EuY$, then $\rho$ is a quasi-isomorphism.
\end{lem}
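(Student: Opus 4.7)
The plan is to reduce the statement to the two split-generation hypotheses by a double application of a ``cohomological functor'' argument, one variable at a time. Fix $Y \in Ob(\EuD)$ and consider the full subcategory
\[ \EuZ_Y := \{X \in Ob(\EuC) : \rho^{0|1|0}_{X,Y} \text{ is a quasi-isomorphism}\} \subset H^0(\EuC).\]
The strategy is to verify that $\EuZ_Y$ satisfies the four closure properties from the definition of split-generation, and then conclude. Closure under isomorphism is automatic from functoriality of cohomology. For closure under cones, I will use that the right $\EuC$-module $\EuP(-,Y)$ (and likewise $\EuQ(-,Y)$) obtained by forgetting the $\EuD$-action from $\EuP$ descends to a cohomological functor $H^0(\EuC)^{op} \to \textsf{Vect}_{\BbK}^{gr}$: given a distinguished triangle $X_1 \to X_2 \to X_3$ in $H^0(\EuC)$, one obtains long exact sequences
\[ \cdots \to H^\bullet(\EuP(X_3,Y)) \to H^\bullet(\EuP(X_2,Y)) \to H^\bullet(\EuP(X_1,Y)) \to \cdots\]
and similarly for $\EuQ$, with $\rho$ inducing a map of long exact sequences. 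The five-lemma then shows that $\EuZ_Y$ is closed under cones. For closure under summands, one uses that the cohomological functor $H^\bullet(\EuP(-,Y))$ sends direct sums in $H^0(\EuC)$ to direct sums of graded vector spaces: if $X = X' \oplus X''$ in $H^0(\EuC)$ and $\rho^{0|1|0}_{X,Y}$ is a quasi-isomorphism, restricting to the summands shows that $\rho^{0|1|0}_{X',Y}$ and $\rho^{0|1|0}_{X'',Y}$ are quasi-isomorphisms as well.

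With these closure properties established, if $Y \in \EuY$ then $\EuZ_Y \supset \EuX$ by hypothesis; since $\EuX$ split-generates $\EuC$, we conclude $\EuZ_Y = Ob(\EuC)$. Hence $\rho^{0|1|0}_{X,Y}$ is a quasi-isomorphism for every $X \in Ob(\EuC)$ and every $Y \in \EuY$. Now I repeat the argument with the roles of the two variables swapped: fix $X \in Ob(\EuC)$ and set
\[ \EuW_X := \{Y \in Ob(\EuD) : \rho^{0|1|0}_{X,Y} \text{ is a quasi-isomorphism}\} \subset H^0(\EuD).\]
The same four closure properties hold for $\EuW_X$, verified using that $\EuP(X,-)$ and $\EuQ(X,-)$ define cohomological functors on $H^0(\EuD)$. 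By what we just proved, $\EuW_X \supset \EuY$, and $\EuY$ split-generates $\EuD$, so $\EuW_X = Ob(\EuD)$. This gives the desired conclusion.

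The main obstacle is the verification that the bimodule yields a cohomological functor on $H^0(\EuC)$ in the first variable (and symmetrically in the second): concretely, given a cone $X_3 \simeq \mathsf{Cone}(X_1 \to X_2)$ in $\twsplit \EuC$, one needs to produce a short exact sequence of chain complexes
\[ 0 \to \EuP(X_2,Y) \to \EuP(X_3,Y) \to \EuP(X_1,Y)[1] \to 0\]
(up to quasi-isomorphism), naturally in $\EuP$. This is standard for $A_\infty$ modules over twisted complexes, and the same construction is compatible with bimodule homomorphisms, so the naturality needed to invoke the five-lemma is automatic. Closure under summands is easier, as $H^0(\EuC)$-idempotents act on $H^\bullet(\EuP(-,Y))$ and $H^\bullet(\EuQ(-,Y))$, and $\rho$ intertwines them.
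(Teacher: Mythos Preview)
Your proposal is correct and follows essentially the same route as the paper's proof: fix one variable, use that $\EuP(-,Y)$ and $\EuQ(-,Y)$ are cohomological functors to get long exact sequences from triangles, apply the five-lemma for closure under cones, use the idempotent/retract argument for closure under summands, then repeat in the other variable. The only cosmetic difference is that the paper packages this as a single set $\EuS \subset Ob(\EuC) \times Ob(\EuD)$ of pairs rather than your $\EuZ_Y$ and $\EuW_X$, and for summands the paper works directly with a retract $X' \xrightarrow{i} X \xrightarrow{p} X'$ (not assuming a complement $X''$ exists in $H^0(\EuC)$), which is slightly more general but amounts to the same idea.
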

\begin{proof}
Denote by $\EuS \subset Ob(\EuC)$ the set of objects $X$ such that \eqref{eqn:rhomod} is a quasi-isomorphism for all $Y \in \EuY$. This set contains $\EuX$ by hypothesis, and it is straightforward to show that it is closed under forming cones and direct summands; therefore it is all of $Ob(\EuC)$ because $\EuX$ split-generates. 
Now repeat the argument for the set $\EuT \subset Ob(\EuD)$ of objects $Y$ such that \eqref{eqn:rhomod} is a quasi-isomorphism for all $X \in Ob(\EuC)$.
\end{proof}

\begin{lem}[=Lemma \ref{lem:moritasplit}]
\label{lem:moritafull}
If $F: \EuC \to \EuD$ is a cohomologically full and faithful $A_\infty$ functor, and $\EuD$ is split-generated by the image of $F$, then $\EuM := (F \otimes \mathrm{Id})^* \EuD_\Delta$ and $\EuN := (\mathrm{Id} \otimes F)^* \EuD_\Delta$ define a Morita equivalence between $\EuC$ and $\EuD$.
\end{lem}
\begin{proof}
Tensor products of bimodules respect pullbacks in the following sense: If $F_i: \EuC_i \to \EuD_i$ are $A_\infty$ functors for $i = 1,2,3$, then there is a morphism of bimodules
\begin{equation}
\label{eqn:tensfunc}
 (F_1 \otimes F_2)^* \EuM \otimes_{\EuC_2} (F_2 \otimes F_3)^* \EuN \to (F_1 \otimes F_3)^* \left(\EuM \otimes_{\EuD_2} \EuN \right).
\end{equation}
It is given by the formula
\[ m [ a_1 | \ldots |a_s] n \mapsto \sum m[ F_2(a_1,\ldots)|\ldots| F_2(\ldots,a_s)] n\]
(no higher maps). 
If $F_2$ is the identity functor, it is clear from the formula that \eqref{eqn:tensfunc} is the identity. 
It follows immediately that $ \EuM \otimes_\EuD \EuN \cong (F \otimes F)^* \EuD_\Delta$. Now there is a natural morphism $ \EuC_\Delta \to (F \otimes F)^* \EuD_\Delta$,
given by contracting all terms with $F$. 
This morphism is clearly a quasi-isomorphism when $F$ is cohomologically full and faithful. 
Therefore, $\EuC_\Delta \cong \EuM \otimes_\EuD \EuN$ as required.

It remains to prove that $\EuN \otimes_\EuC \EuM \cong \EuD_\Delta$. 
From \eqref{eqn:tensfunc}, we obtain a morphism of bimodules
\begin{equation}
\label{eqn:perftens}
 \EuN \otimes_\EuC \EuM \to (\mathrm{Id} \otimes \mathrm{Id})^* (\EuD_\Delta \otimes_\EuD \EuD_\Delta) \overset{\cong}{\to} \EuD_\Delta. 
\end{equation}
So it remains to prove that this morphism is a quasi-isomorphism of $(\EuD,\EuD)$ bimodules. 
The linear term of \eqref{eqn:perftens} is the map
\begin{equation}
\label{eqn:rhofirst}
\bigoplus hom^\bullet_\EuD(F(X_0),U) \otimes hom^\bullet_\EuC(X_1,X_0) \otimes \ldots \otimes hom^\bullet_\EuC(X_s,X_{s-1}) \otimes hom^\bullet_\EuD(V,F(X_s)) \to hom^\bullet_\EuD(V,U) 
\end{equation}
given by the formula
\begin{equation}
\label{eqn:rhofirstform}
m[a_1|\ldots |a_s] n  \mapsto  \sum \mu^*(m,F^\bullet(c_1,\ldots), \ldots,F^\bullet(\ldots,a_s),n).
\end{equation}

We now prove that \eqref{eqn:rhofirst} is a quasi-isomorphism in the special case that $U = F(\tilde{U})$ and $V= F(\tilde{V})$. 
To do this, we observe that the following diagram commutes up to homotopy:
\begin{equation}
\label{eqn:splitF} \xymatrix{ \bigoplus hom^\bullet_\EuC(X_0,\tilde{U}) \otimes \EuC(X_0,\ldots,X_s) \otimes hom^\bullet_{\EuC}(\tilde{V},X_s) \ar[d]^-{F^\bullet \otimes \mathrm{Id} \otimes F^\bullet} \ar[r]^-{\mu^*_\EuC} & hom^\bullet_\EuC(\tilde{U},\tilde{V}) \ar[d]^{F^1} \\
\bigoplus hom^\bullet_\EuD(F(Y_0),F(\tilde{U})) \otimes \EuC(Y_0,\ldots,Y_t ) \otimes hom^\bullet_\EuD(F(\tilde{V}),F(Y_t)) \ar[r] & hom^\bullet_{\EuD}(F(\tilde{U}),F(\tilde{V})).} 
\end{equation}
The left vertical map sends
\[ m[ a_1| \ldots|a_s ] n  \mapsto  \sum F^\bullet(m,a_1,\ldots)[a_{i+1}| \ldots| a_j ]F^\bullet(a_{j+1},\ldots,a_s,n).\]
(compare \cite[Equation (2.240)]{Ganatra2013}). 
The  bottom horizontal map is precisely the map \eqref{eqn:rhofirst}.
The diagram commutes up to the homotopy given by
\[ m[a_1|\ldots |a_s] n \mapsto F^\bullet(m,c_1,\ldots,c_s,n),\]
 using the $A_\infty$ functor equations for $F$. 
Furthermore, the top horizontal arrow is a quasi-isomorphism (it is the first term in the quasi-isomorphism $\EuC_\Delta \otimes_\EuC \EuC_\Delta \cong \EuC_\Delta$); the right vertical arrow is a quasi-isomorphism (because $F$ is cohomologically full and faithful); the left vertical arrow is a quasi-isomorphism, by a comparison argument for the spectral sequences induced by the obvious length filtrations, again using the fact that $F$ is cohomologically full and faithful. 
It follows that the bottom map is a quasi-isomorphism. 
So \eqref{eqn:rhofirst} is a quasi-isomorphism when $U$ and $V$ are in the image of $F$. 

It follows that \eqref{eqn:rhofirst} is a quasi-isomorphism of bimodules, by Lemma \ref{lem:bimodsplit} and the hypothesis that the image of $F$ split-generates $\EuD$.
\end{proof}

\begin{thm}
\label{thm:moritaonlyif}
If $\EuC$ and $\EuD$ are Morita equivalent, then $\twsplit \EuC$ and $\twsplit \EuD$ are quasi-equivalent.
\end{thm}
\begin{proof}
This is a consequence of \cite[Proposition 13.34.6]{SP2018}, which is due to \cite{Thomason1990} and \cite{Neemann1992} (see also \cite[Theorem 3.4]{Keller2006}). 
Namely, we consider the $\mathsf{dg}$ category $\modules \EuC$ of right $A_\infty$ $\EuC$-modules: its cohomological category $H^0(\modules \EuC)$ is a triangulated category, which admits arbitrary coproducts and is compactly generated by the Yoneda modules. 
We call an object of $\modules \EuC$ \emph{compact} if the corresponding object of the triangulated category $H^0(\modules \EuC)$ is compact in the usual sense. 
Then, the subcategory of compact objects of $\modules \EuC$ is precisely the triangulated split-closure of the image of the Yoneda embedding, by the above-mentioned theorem.
We refer to it as $\EuC^{perf}$: it is quasi-equivalent to $\twsplit \EuC$ by the uniqueness of triangulated split-closures \cite[Lemma 4.7]{Seidel2008}.
 
Now suppose $\EuC$ and $\EuD$ are Morita equivalent. 
Then we have a quasi-equivalence $ \modules \EuC \cong \modules \EuD$,
given by tensoring with the Morita bimodule. 
As a consequence, the respective subcategories of compact objects, $\EuC^{perf}$ and $\EuD^{perf}$, are quasi-equivalent: hence $\twsplit \EuC$ and $\twsplit \EuD$ are quasi-equivalent.
\end{proof}

\section{Functoriality of the Getzler--Gauss--Manin connection}
\label{app:GM}

The aim of this appendix is to prove Theorem \ref{thm:GMMorita}.

\begin{lem}
\label{lem:defmod}
Let $F: \EuC \to \EuD$ be an $A_\infty$ functor. 
Define $H_1, H_2: CC_\bullet(\EuC) \to CC_\bullet(\EuD)$ by
\begin{multline}
H_1(a_0[a_1|\ldots|a_s] ):= \\
\sum (-1)^{\varepsilon_j} F^*\left(a_0,\ldots,v(\mu^*_\EuC(a_{j+1},\ldots),\overbrace{\ldots,a_k}\right) \left[F^*(a_{k+1},\ldots)|\ldots|F^*(\ldots,a_s)\right].
\end{multline}
and
\begin{multline}
H_2(a_0[a_1|\ldots|a_s] ) :=\\ \sum \mu^*_\EuD\left(F^*(a_0,\ldots),\ldots,v(F^*)(\ldots),\overbrace{F^*(\ldots),\ldots,F^*(\ldots)}\right) \left[F^*(\ldots)|\ldots|F^*(\ldots,a_s) \right].
\end{multline}
Define $H:=H_1-H_2$.
Then
\[ F_*(b^{1|1}(v(\mu^*_\EuC)|\alpha) = b^{1|1}(v(\mu^*_\EuD)|F_*(\alpha)) + b(H(\alpha)) + H(b(\alpha))\]
for all $\alpha \in CC_\bullet(\EuC)$. 
In particular, on the level of cohomology,
\[ F_*(\mathsf{KS}(v) \cap \alpha) = \mathsf{KS}(v) \cap F_*(\alpha).\]
\end{lem}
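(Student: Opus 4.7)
The plan is to derive the identity by differentiating the $A_\infty$ functor equations for $F$ along the derivation $v$, and then repackaging the resulting terms as an identity of cyclic chains. Throughout, all expressions involving $v$ of an operation are understood relative to a fixed choice of $\BbK$-basis on each morphism space of $\EuC$ and $\EuD$, as in Lemma~\ref{lem:KSbasis}.

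First I would apply $v$ to the $A_\infty$ functor equations \eqref{eqn:Ainfun}. The product rule produces four families of terms: those in which $v$ lands on $\mu^*_\EuD$, on $\mu^*_\EuC$, on an intermediate $F^*$, and on an input $a_i$. The terms of the last type cancel between the two sides (since the undifferentiated equation \eqref{eqn:Ainfun} is identically satisfied, including when specialised to the perturbed inputs $a_i + \epsilon\, v(a_i)$), yielding a \emph{differentiated functor equation}
\begin{multline*}
\sum v(\mu^*_\EuD)\bigl(F^*(\ldots),\ldots,F^*(\ldots)\bigr)
+ \sum (-1)^{\ddagger} \mu^*_\EuD\bigl(F^*(\ldots),\ldots,v(F^*)(\ldots),\ldots,F^*(\ldots)\bigr) \\
= \sum (-1)^{\epsilon_j} F^*\bigl(a_1,\ldots,v(\mu^*_\EuC)(a_{j+1},\ldots),\ldots,a_s\bigr)
+ \sum (-1)^{\epsilon_j} v(F^*)\bigl(a_1,\ldots,\mu^*_\EuC(a_{j+1},\ldots),\ldots,a_s\bigr).
\end{multline*}

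Next I would promote this to the cyclic setting by summing over all cyclic placements for $a_0$ in $\alpha = a_0[a_1|\ldots|a_s]$, treating the first $F^*$ (the one containing $a_0$) as distinguished. The identity above then produces, on the left-hand side, exactly $b^{1|1}(v(\mu^*_\EuD)\,|\,F_*(\alpha))$ (from the first family, after the distinguished group wraps around $a_0$) and contributions involving $v(F^*)$; on the right-hand side, exactly $F_*(b^{1|1}(v(\mu^*_\EuC)\,|\,\alpha))$ (from the third family) and further contributions involving $v(F^*)$. The contributions involving $v(F^*)$ from both sides are, by inspection, precisely the terms appearing in the pieces of $b\circ H_i$ and $H_i\circ b$ where $\mu^*_\EuD$ is applied to a tuple containing the $v(F^*)$ block (for $H_2$), respectively where $\mu^*_\EuC$ is applied to a tuple adjacent to the $v(F^*)$ block inside $F^*(\ldots,v(\mu^*_\EuC)(\ldots),\ldots)$ replaced by $v(F^*)(\ldots,\mu^*_\EuC(\ldots),\ldots)$ (for $H_1$).

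The main obstacle is the sign bookkeeping and verifying that the remaining "non-differentiated" terms of $b\circ H$ and $H\circ b$ (those in which $\mu^*$ does not interact with the $v$-block) cancel in pairs. These cancellations are governed by the ordinary (undifferentiated) $A_\infty$ functor equations and the Hochschild identity $b^2=0$, applied to the tuple of inputs containing the isolated $v(\mu^*_\EuC)$ or $v(F^*)$; each such pair combines two cyclic placements differing only in which of two adjacent $\mu^*$'s is applied first. We would verify these cancellations using the graphical notation of Appendix~\ref{app:signs}, which reduces the sign check to a planar isotopy argument and matches each unmatched summand on one side with a unique unmatched summand on the other. Assembling all four families yields the cochain-level identity
\[ F_*\bigl(b^{1|1}(v(\mu^*_\EuC)\,|\,\alpha)\bigr) - b^{1|1}\bigl(v(\mu^*_\EuD)\,|\,F_*(\alpha)\bigr) = b\bigl(H(\alpha)\bigr) + H\bigl(b(\alpha)\bigr), \]
and the final assertion about $\mathsf{KS}(v)\cap -$ follows by passing to cohomology, using the cochain-level representative $v(\mu^*)$ of $\mathsf{KS}(v)$ provided by Lemma~\ref{lem:KSbasis} together with the definition of $\cap$ at the end of Section~\ref{subsec:hochhom}.
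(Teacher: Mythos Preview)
Your plan has the right ingredients (the differentiated $A_\infty$ functor equation, the undifferentiated one, and the $A_\infty$ relations), and indeed these are exactly what the paper uses. However, your claim about what the cyclically-promoted differentiated functor equation \emph{directly} produces is not right, and this throws off the organisation.

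Concretely: the expression $F_*\bigl(b^{1|1}(v(\mu^*_\EuC)\,|\,\alpha)\bigr)$ has \emph{three} nested operations in its leading term,
\[
F^*\Bigl(\ldots,\mu^*_\EuC\bigl(\ldots,v(\mu^*_\EuC)(\ldots),\overbrace{\ldots}\bigr),\ldots\Bigr)[\,F^*|\ldots|F^*\,],
\]
whereas every term of the differentiated functor equation has only \emph{two} (namely $F^*$ composed with one of $v(\mu^*_\EuC)$, $\mu^*_\EuC$, $\mu^*_\EuD$, or $v(F^*)$). The same applies to $b^{1|1}\bigl(v(\mu^*_\EuD)\,|\,F_*(\alpha)\bigr)$, which has $\mu^*_\EuD$ applied externally to a string containing $v(\mu^*_\EuD)(F^*,\ldots,F^*)$. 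So neither target term can be read off from any single family of your differentiated identity after cyclic promotion; an extra $\mu^*$ must enter from somewhere else.

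The paper handles this as follows. It obtains $F_*\bigl(b^{1|1}(v(\mu^*_\EuC)|\alpha)\bigr)$ (called $A$) not from the differentiated equation at all, but by taking the \emph{undifferentiated} functor equation and precomposing with an auxiliary insertion map $G_i$ that plants $v(\mu^*_\EuC)$ into the word; this yields a relation $A1+A+A2+A3=A4+A5$ among six explicit three-level terms. Separately, the $A_\infty$ relation $\mu^*_\EuD\circ\mu^*_\EuD=0$, with one $v(F^*)$ inserted, gives $C1+C2+C3+C4=0$. The differentiated functor equation is then used (in its cyclic form) only to link the two collections via $B1+C3=D+A5$, where $B1=b^{1|1}(v(\mu^*_\EuD)|F_*(\alpha))$. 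Finally, direct computation shows $b\circ H_1+H_1\circ b = A4-A1-A2-A3$ and $b\circ H_2+H_2\circ b = C1+C2+D+C4$, and combining the five displayed equalities collapses everything to $A=B1+b\circ H+H\circ b$. In short, three separate identities (functor equation with an inserted cocycle, $\mu\circ\mu=0$ with an inserted $v(F^*)$, and the differentiated functor equation) are needed to close the loop, not one identity plus residual cancellations.
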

\begin{proof}
By the $A_\infty$ functor equation,
\begin{multline}
\sum (-1)^{\varepsilon_j}F^*\left(a_0,\ldots,\mu^*(a_{j+1},\ldots),\ldots,a_i\right)[a_{i+1}|\ldots|a_s] \\= \sum \mu^*\left(F^*(a_0,\ldots),F^*(\ldots),\ldots,F^*(\ldots,a_i)\right)[a_{i+1}|\ldots|a_s]
\end{multline}
for all $i$.
Pre-compose this relation with the map $G_i:CC_\bullet(\EuC) \to CC_\bullet(\EuC)$, defined by 
\[ G_i(a_0[a_1|\ldots|a_s]) := \sum (-1)^{\varepsilon_j}a_0[a_1|\ldots|v(\mu^*)(a_{j+1},\ldots)|\overbrace{\ldots|a_i}|\ldots|a_s].\]
One obtains
\begin{equation}
\label{eqn:A15}
 A1 + A + A2 + A3 = A4 + A5,
\end{equation}
where
\begin{multline}
A1(a_0[a_1|\ldots|a_s]) := \\ 
\sum (-1)^{\varepsilon_j + \varepsilon_k} F^*\left(a_0,\ldots,\mu^*(a_{j+1},\ldots),\ldots,v(\mu^*)(a_{k+1},\ldots),\overbrace{\ldots,a_i}\right)\left[F^*(a_{i+1},\ldots)|\ldots|F^*(\ldots,a_s) \right],  
\end{multline}
\begin{multline}
A(a_0[a_1|\ldots|a_s]) := \\ 
\sum (-1)^{\varepsilon_j + \varepsilon_k} F^*\left(a_0,\ldots,\mu^* \left(a_{j+1},\ldots,v(\mu^*)(a_{k+1},\ldots),\overbrace{\ldots,a_i}\right),\ldots \right)\left[F^*(\ldots)|\ldots|F^*(\ldots,a_s) \right],  
\end{multline}
\begin{multline}
A2(a_0[a_1|\ldots|a_s]) := \\ 
\sum (-1)^{\varepsilon_j + \varepsilon_k} F^*\left(a_0,\ldots,\mu^* \left(a_{j+1},\ldots,v(\mu^*)(a_{k+1},\ldots),\ldots \right),\overbrace{\ldots,a_i}\right)\left[F^*(\ldots)|\ldots|F^*(\ldots,a_s) \right], 
\end{multline}
\begin{multline}
A3(a_0[a_1|\ldots|a_s]) := \\ 
\sum (-1)^{\varepsilon_j + \varepsilon_k} F^*\left(a_0,\ldots,v(\mu^*)(a_{j+1},\ldots),\overbrace{\ldots,\mu^*(a_{k+1},\ldots),\ldots,a_i}\right)\left[F^*(\ldots)|\ldots|F^*(\ldots,a_s) \right], 
\end{multline}
\begin{multline}
A4(a_0[a_1|\ldots|a_s]) := \\ 
\sum (-1)^{\varepsilon_j} \mu^*\left(F^*(a_0,\ldots),\ldots,F^* \left(\ldots,v(\mu^*)(a_{j+1},\ldots),\overbrace{\ldots,a_i}\right),\ldots,F^*(\ldots) \right)\left[\ldots|F^*(\ldots,a_s) \right], 
\end{multline}
\begin{multline}
A5(a_0[a_1|\ldots|a_s]) := \\ 
\sum (-1)^{\varepsilon_j} \mu^*\left(F^*(a_0,\ldots),\ldots,F^* \left(\ldots,v(\mu^*)(a_{j+1},\ldots),\ldots \right),\overbrace{F^*(\ldots),\ldots,F^*(\ldots,a_i)} \right) \left[\ldots|F^*(\ldots,a_s) \right].
\end{multline}

By the $A_\infty$ relations $\mu^* \circ \mu^* = 0$, we find that
\begin{equation}
\label{eqn:C14}
C1+C2+C3+C4 = 0,
\end{equation}
where
\begin{multline}
C1(a_0[a_1|\ldots|a_s]) := \\
\sum (-1)^{\varepsilon_j} \mu^*\left( F^*(a_0,\ldots),\ldots,\mu^*\left(F^*(a_{j+1},\ldots),\ldots\right),\ldots,v(F^*)(\ldots),\overbrace{\ldots,F^*(\ldots,a_i)} \right) \left[\ldots|F^*(\ldots,a_s) \right],
\end{multline}
\begin{multline}
C2(a_0[a_1|\ldots|a_s]) := \\
\sum (-1)^{\varepsilon_j} \mu^*\left( F^*(a_0,\ldots),\ldots,\mu^*\left(F^*(a_{j+1},\ldots),\ldots,v(F^*)(\ldots),\overbrace{\ldots,F^*(\ldots)} \right),\ldots,a_i\right)\left[\ldots|F^*(\ldots,a_s) \right],
\end{multline}
\begin{multline}
C3(a_0[a_1|\ldots|a_s]) := \\
\sum (-1)^{\varepsilon_j} \mu^*\left( F^*(a_0,\ldots),\ldots,\mu^* \left(F^*(a_{j+1},\ldots),\ldots,v(F^*)(\ldots),\ldots \right),\overbrace{F^*(\ldots),\ldots,F^*(\ldots,a_i)} \right)\left[\ldots|F^*(\ldots,a_s) \right],
\end{multline}
\begin{multline}
C4(a_0[a_1|\ldots|a_s]) := \\
\sum (-1)^{\varepsilon_j} \mu^*\left( F^*(a_0,\ldots),\ldots,v(F^*)(\ldots),\overbrace{\ldots,\mu^* \left(F^*(a_{j+1},\ldots),\ldots \right), \ldots,F^*(\ldots,a_i)} \right)\left[\ldots|F^*(\ldots,a_s) \right],
\end{multline}

By differentiating the $A_\infty$ functor equation, we find that
\begin{equation}
\label{eqn:BCD}
 B1+C3=D+A5,
\end{equation}
where
\begin{multline}
B1(a_0[a_1|\ldots|a_s]) := \\
\sum (-1)^{\varepsilon_j} \mu^*\left( F^*(\ldots),\ldots,F^*(\ldots),v(\mu^*)\left(F^*(\ldots),\ldots \right),\overbrace{F^*(\ldots),\ldots,F^*(\ldots,a_i)} \right)\left[\ldots|F^*(\ldots,a_s) \right],
\end{multline}
\begin{multline}
D(a_0[a_1|\ldots|a_s]) := \\
\sum (-1)^{\varepsilon_j} \mu^*\left( F^*(\ldots),\ldots,F^*(\ldots),v(F^*)\left(\ldots,\mu^*(a_{j+1},\ldots),\ldots \right),\overbrace{F^*(\ldots),\ldots,F^*(\ldots,a_i)} \right)\left[\ldots|F^*(\ldots,a_s) \right],
\end{multline}

We now compute that
\begin{equation}
\label{eqn:bH1}
 b \circ H_1 + H_1 \circ b = A4 - A1 - A2 - A3;
\end{equation}
all other terms cancel by the $A_\infty$ functor equations (note: here, `$\circ$' simply denotes composition of functions, not Gerstenhaber product). 
We similarly compute that
\begin{equation}
\label{eqn:bH2}
 b \circ H_2 + H_2 \circ b = C2 + C1 + D + C4
\end{equation}
(we must apply the $A_\infty$ functor equation to obtain the terms $C1$ and $C4$). 

Combining equations \eqref{eqn:A15}, \eqref{eqn:C14}, \eqref{eqn:BCD}, \eqref{eqn:bH1} and \eqref{eqn:bH2}, we find that
\[ A = B1 + b \circ H + H \circ b.\]
We now observe that $A = F_*(b^{1|1}(v(\mu^*_\EuC)|\alpha)$ and $B1 = b^{1|1}(v(\mu^*_\EuD)|F_*(\alpha))$ by definition; so the proof is complete.
\end{proof}

\begin{thm} (Theorem \ref{thm:GMMorita})
Let $F: \EuC \to \EuD$ be an $A_\infty$ functor, and $F_*: \HC^-_\bullet(\EuC) \to \HC^-_\bullet(\EuD)$ the map induced by $F$. 
Denote by $\nabla$ the Getzler--Gauss--Manin connection (Definition \ref{defn:conn}). 
Then 
\[ F_* \circ u\nabla_v = u\nabla_v \circ F_*\]
on the level of cohomology, for all $v \in \deriv_\Bbbk \BbK$. 
\end{thm}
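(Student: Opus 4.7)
The plan is to produce a chain-level homotopy $\EuH: CC_\bullet^-(\EuC) \to CC_\bullet^-(\EuD)$ satisfying
\[
u F_* \circ \nabla_v - u \nabla_v \circ F_* = [b+uB,\EuH],
\]
which immediately yields the stated identity on cohomology. Writing $\EuH = H + u K$, the $u^0$-component $H$ will be exactly the homotopy constructed in Lemma~\ref{lem:defmod}, while the $u^1$-component $K$ is new and accounts for the $B^{1|1}$-term in $\nabla_v$ together with the discrepancy $F_* v - v F_*$ coming from the fact that $v$ acts on the coefficients of $F^*$.

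First I would expand both sides on the chain level. Using the definition $\nabla_v(\alpha) = v(\alpha) - u^{-1} b^{1|1}(v(\mu^*)|\alpha) - B^{1|1}(v(\mu^*)|\alpha)$, and the fact that $v(F_*(\alpha)) = F_*(v(\alpha)) + v(F^*)_*(\alpha)$, where $v(F^*)_*$ denotes the operator obtained from $F_*$ by differentiating only the coefficients of $F^*$ (with $\mu_\EuC^*, \mu_\EuD^*$ and the $a_i$ fixed), one gets
\[
u F_*\nabla_v(\alpha) - u\nabla_v F_*(\alpha) \;=\; -\,u\,v(F^*)_*(\alpha) \;-\; \Delta_b(\alpha) \;-\; u\,\Delta_B(\alpha),
\]
where $\Delta_b(\alpha) := F_*(b^{1|1}(v(\mu_\EuC^*)|\alpha)) - b^{1|1}(v(\mu_\EuD^*)|F_*(\alpha))$ and similarly for $\Delta_B$. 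By Lemma~\ref{lem:defmod}, $\Delta_b = [b,H]$, so the $u^0$-part is already accounted for by the $H$-component of $\EuH$.

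Next I would construct $K = K_1 - K_2$ by analogy with $H = H_1 - H_2$, but with an initial $e^+$ to turn the underlying operation into a Connes-type insertion. Concretely, $K_1$ is obtained from $H_1$ by placing the output of the outer $F^*$-applied tensor inside the square brackets preceded by $e^+$, and shuffling the cyclic bracing accordingly, so that $K_1$ relates to $H_1$ as $B^{1|1}$ relates to $b^{1|1}$; and $K_2$ is obtained from $H_2$ by the analogous modification, inserting $v(F^*)$ in place of a differential $F^*$-term. The target identity reduces to
\[
-\,u\,v(F^*)_* \;-\; u\,\Delta_B \;=\; u[B,H] + u[b,K] + u^2 [B,K].
\]

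The bulk of the work is then to verify this chain-level identity. I would run the same strategy as in the proof of Lemma~\ref{lem:defmod}: take the $A_\infty$ functor equation for $F$, pre- and post-compose with operators that insert $v(\mu^*)$, and separately differentiate the $A_\infty$ functor equation by $v$ (this produces $v(F^*)$-terms, paralleling equation \eqref{eqn:BCD} in the proof of Lemma~\ref{lem:defmod}). Grouping terms according to where $\mu^*, v(\mu^*), F^*, v(F^*)$ and $e^+$ are inserted, one checks that each term appearing in $u\,v(F^*)_* + u\,\Delta_B$ is matched by a term in $u[B,H] + u[b,K] + u^2[B,K]$, and that the remaining contributions cancel in pairs via the $A_\infty$ relations $\mu^* \circ \mu^* = 0$ and $(M^1)^2=0$. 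The sign bookkeeping is best handled using the graphical notation of Appendix~\ref{app:signs}.

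The main obstacle is combinatorial: each of $[b,K]$, $[B,H]$, $[B,K]$ produces many terms distinguished by the relative positions of the bracing cycle, the insertion of $v(\mu^*)$ or $v(F^*)$, and the strict unit $e^+$; verifying that the signs cancel pairwise requires patience rather than new ideas. The conceptual content is already present in Lemma~\ref{lem:defmod}; the extension to negative cyclic chains amounts to repeating the same manipulation one level up, with $B$ in the role of $b$ and $e^+$ marking the distinguished position. Once this chain-level identity is established, the theorem follows by passing to cohomology, and Corollary~\ref{cor:GMbasis} follows by specializing to $F = \mathrm{Id}$ with two different choices of basis.
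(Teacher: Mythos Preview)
Your strategy matches the paper's: build an explicit homotopy $H^u$ for $(b+uB)$ whose $u^0$-part is the homotopy from Lemma~\ref{lem:defmod} and whose $u^1$-part handles the $B^{1|1}$-terms together with the discrepancy $F_*v - vF_* = -v(F^*)_*$, then verify the $u^1$-identity by differentiating the $A_\infty$ functor equation and regrouping.

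Two small corrections to your sketch. First, the $u^0$-part of $H^u$ must be $H_2 - H_1$, not $H = H_1 - H_2$: the left-hand side contributes $-\Delta_b$ at order $u^0$, whereas Lemma~\ref{lem:defmod} gives $[b,H] = +\Delta_b$. Second, the $u$-correction in the paper is a \emph{single} operator
\[
H_3(a_0[a_1|\ldots|a_s]) := \sum e^+\bigl[F^*(a_0,\ldots)\,\big|\,\ldots\,\big|\,v(F^*)(\ldots)\,\big|\,\overbrace{F^*(\ldots)\,|\,\ldots\,|\,F^*(\ldots,a_s)}\,\bigr],
\]
i.e.\ the $e^+$-lift of $H_2$ built from $v(F^*)$, rather than a difference $K_1 - K_2$. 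No $e^+$-lift of $H_1$ is needed: $B \circ H_1$ already produces the required term (called $S1$ in the paper) directly, and $H_1 \circ B = 0$ in $CC^{nu}_\bullet$ since $F^*(e^+,\ldots)$ is either zero or yields a degenerate chain. The $u^2$-term $[B,H_3]$ likewise vanishes in the non-unital complex, because every summand acquires an $e^+$ in a position $i>0$.
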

\begin{proof}
Define $H_3: CC_\bullet^{nu}(\EuC) \to CC_\bullet^{nu}(\EuD)$ by
\[ H_3(a_0[a_1|\ldots|a_s]) := \sum e^+\left[F^*(a_0,\ldots)|\ldots|v(F^*)(\ldots),\overbrace{F^*(\ldots),\ldots,F^*(\ldots,a_s)} \right].\]
Let $H^u: CC_\bullet^-(\EuC) \to CC_\bullet^-(\EuD)$ be defined by $H^u := H_2 - H_1 + u \cdot H_3$.
We will prove that
\begin{equation}
\label{eqn:Fnab} F_* \circ u\nabla_v - u\nabla_v \circ F_* = (b + uB) \circ H^u + H^u \circ (b+uB),
\end{equation}
from which the result follows. 

It suffices to prove \eqref{eqn:Fnab} for $\alpha \in CC_\bullet^{nu}(\EuC)$, by $\BbK\power{u}$-linearity.
We separate it into powers of $u$: it is clear that the $u^i$ term vanishes for all $i$ except $i=0,1$. 
The $u^0$ component of \eqref{eqn:Fnab} says
\begin{equation}
 F_*(b^{1|1}(v(\mu^*)|\alpha) - b^{1|1}(v(\mu^*)|F_*(\alpha)) = b\circ(H_1 - H_2)(\alpha) + (H_1-H_2)\circ b(\alpha),
\end{equation}
which holds by Lemma \ref{lem:defmod}.
The $u^1$ component of \eqref{eqn:Fnab} says
\begin{multline}
\label{eqn:terms}
F_*(v(\alpha)) - v(F_*(\alpha)) - F_*(B^{1|1}(v(\mu^*)|\alpha) + B^{1|1}(v(\mu^*)|F_*(\alpha)) \\= b \circ H_3(\alpha) + H_3 \circ b(\alpha) - B \circ (H_1 - H_2)(\alpha) - (H_1-H_2)\circ B(\alpha).
\end{multline}

First, by differentiating the $A_\infty$ functor equation, we obtain the relation
\begin{equation}
\label{eqn:Q14}
Q1+Q4 = Q2+Q3,
\end{equation}
where
\begin{multline}
Q1(a_0[a_1|\ldots|a_s]) :=\\ \sum (-1)^{\varepsilon_j} F^*(a_0,\ldots)\left[F^*(\ldots)|\ldots| v(F^*)\left(\ldots,\mu^*(a_{j+1},\ldots),\ldots\right)|\overbrace{F^*(\ldots)|\ldots|F^*(\ldots,a_s}) \right],
\end{multline}
\begin{multline}
Q2(a_0[a_1|\ldots|a_s]) :=\\ \sum (-1)^{\varepsilon_j}F^*(a_0,\ldots)\left[F^*(\ldots)|\ldots|\mu^*\left(F^*(a_{j+1},\ldots),\ldots, v(F^*)(\ldots),\ldots \right)|\overbrace{F^*(\ldots)|\ldots|F^*(\ldots,a_s}) \right]
\end{multline}
\begin{multline}
Q3(a_0[a_1|\ldots|a_s]) := \\ \sum (-1)^{\varepsilon_j}e^+\left[F^*(a_0,\ldots)|\ldots|v(\mu^*)\left(F^*(a_{j+1},\ldots),\ldots \right)|\overbrace{\ldots|F^*(\ldots,a_s)} \right]
\end{multline}
\begin{multline}
Q4(a_0[a_1|\ldots|a_s]) := \\ \sum (-1)^{\varepsilon_j}e^+\left[F^*(a_0,\ldots)|\ldots|F^* \left(\ldots,v(\mu^*)(a_{j+1},\ldots),\ldots \right)|\overbrace{\ldots|F^*(\ldots,a_s)} \right].
\end{multline}

Now, we compute each pair of terms in \eqref{eqn:terms} separately. 
We compute
\begin{equation}
\label{eqn:FvvF}
F_* \circ v - v \circ F_* = -P1 -P2,
\end{equation}
where
\begin{equation}
P1(a_0[a_1|\ldots|a_s]) := \sum v(F^*)(\overbrace{a_0,\ldots})\left[F^*(\ldots)|\ldots|F^*(\ldots,a_s) \right],
\end{equation}
\begin{equation}
P2(a_0[a_1|\ldots|a_s]) := \sum F^*(\overbrace{a_0,\ldots})\left[F^*(\ldots)|\ldots|v(F^*)(a_{j+1},\ldots)|\ldots|F^*(\ldots,a_s) \right].
\end{equation}
Next, we compute
\begin{equation}
\label{eqn:bH3H3b}
b \circ H_3 + H_3 \circ b = P3 - P2 + Q1 - R1 - Q2,
\end{equation}
where
\begin{equation}
P3(a_0[a_1|\ldots|a_s]) := \sum v(F^*)(a_0,\ldots)\left[\overbrace{F^*(\ldots)|\ldots|F^*(\ldots,a_s}) \right],
\end{equation}
\begin{multline}
R1(a_0[a_1|\ldots|a_s]) :=\\ \sum (-1)^{\varepsilon_j}F^*(a_0,\ldots)\left[\ldots|\mu^*\left(F^*(a_{j+1},\ldots),\ldots, v(F^*)(\ldots),\overbrace{F^*(\ldots),\ldots} \right)|\ldots|F^*(\ldots,a_s) \right],
\end{multline}
(all other terms cancel by the $A_\infty$ functor equations). 
Next, we compute 
\begin{equation}
\label{eqn:FBBF}
F_* \circ B^{1|1}(v(\mu^*)|-) - B^{1|1}(v(\mu^*)|F_*(-)) = S1 + Q4-Q3,
\end{equation}
where
\begin{multline}
S1(a_0[a_1|\ldots|a_s]) := \\ \sum (-1)^{\varepsilon_j} e^+\left[F^*(a_0,\ldots)|\ldots|F^* \left(\ldots,v(\mu^*)(a_{j+1},\ldots)\overbrace{F^*(\ldots),\ldots} \right)|\ldots|F^*(\ldots,a_s) \right],
\end{multline}
(all other terms cancel by the $A_\infty$ functor equations).
Next, we compute
\begin{equation}
\label{eqn:BH1H1B}
B \circ H_1 = S1
\end{equation}
and
\begin{equation}
H_1 \circ B = 0.
\end{equation}
Next, we compute
\begin{equation}
\label{eqn:BH2H2B}
B \circ H_2 + H_2 \circ B = R1 - P1 - P3.
\end{equation}

Now, by substituting in \eqref{eqn:FvvF}, \eqref{eqn:FBBF}, we obtain
\begin{multline}
F_* \circ v- v \circ F_* - F_* \circ B^{1|1}(v(\mu^*)|-) + B^{1|1}(v(\mu^*)|F_*(-)) \\
= -(P1+P2)-(S1+Q4-Q3) \\ 
= (P3-P2+Q1-R1-Q2) - S1 + (R1-P1-P3) \mbox{ (applying \eqref{eqn:Q14} and regrouping)} \\
= ( b \circ H_3 + H_3 \circ b) - (B \circ H_1 + H_1 \circ B) + B \circ H_2 + H_2\circ B 
\end{multline}
where the last line follows by substituting in \eqref{eqn:bH3H3b},  \eqref{eqn:BH1H1B}, \eqref{eqn:BH2H2B}. 
This completes the proof of \eqref{eqn:terms}, and hence the result.
\end{proof}

\section{Graphical sign convention}
\label{app:signs}

In this appendix we explain a convenient notation for checking formulae in $A_\infty$ algebra, including the signs and gradings. 

\subsection{The idea}

The idea is to represent compositions of multilinear operations by a diagram as in Figure \ref{fig:ainfmultifun}, which we call a \emph{sign diagram}. 
The inputs will always be at the top of the diagram, and the outputs at the bottom. 
Strands in the diagram are allowed to cross, but no three strands should meet at a point.
 
Each strand in the diagram is oriented (from input to output), and carries a degree. 
By convention, strands which correspond to morphisms in our $A_\infty$ category will carry their \emph{reduced} degree, $|a|' := |a| - 1$. 
Also by convention, the sum of the degrees of the edges coming into each vertex must be equal to the sum of the degrees going out. 
This convention forces us to add a red strand coming into each vertex, carrying the degree of the corresponding operation (we omit it when the degree is zero).  
This is the case, for example, for the $A_\infty$ operations $\mu^s$, which have degree $1$ with respect to the reduced degree. 

To any sign diagram $D$, we associate a sign $\sigma(D) \in \Z/2$, as follows: to each crossing of strands, we associate the product of the degrees of those strands (the Koszul sign associated to commuting the corresponding two variables). Then $\sigma(D)$ is the sum of these signs, over all crossings in the diagram. 

\begin{defn}
We say two sign diagrams are \emph{isotopic} if they are related by a sequence of moves of the following two types: moving a strand over a crossing; and moving a strand over a vertex. 
\end{defn}

\begin{lem}
\label{lem:isotsign}
If sign diagrams $D_1$ and $D_2$ are isotopic, then $\sigma(D_1) = \sigma(D_2)$.
\end{lem}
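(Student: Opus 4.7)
The plan is to verify that $\sigma(D)$ is unchanged under each of the two listed elementary moves; by definition, any isotopy decomposes into a finite sequence of such moves, so invariance under each move implies the general statement. Moreover, since $\sigma(D) = \sum_{\text{crossings}} |a|\cdot|b|$ is a local sum, I only need to compare crossings in the small region where the move is supported.

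For the first move (sliding a strand over a crossing), I would argue that this is a local three-strand reconfiguration, analogous to a Reidemeister III move. Before and after the move, the three involved strands $a,b,c$ still cross pairwise in exactly the same combinations; only the spatial positions of the three crossings $(a,b)$, $(a,c)$, $(b,c)$ are rearranged. The degrees attached to each strand are of course unaltered, so the three crossing contributions to $\sigma$ are identical before and after.

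For the second move (sliding a strand $s$ over a vertex $v$), I would compute the change directly. Before the move $s$ crosses a certain subset $E_1$ of the edges incident to $v$, and afterwards it crosses the complementary subset $E_2$; no crossings outside a neighborhood of $v$ are affected. The change in $\sigma(D)$ is therefore
\[ \Delta\sigma = |s| \cdot \left(\sum_{e \in E_2} |e| - \sum_{e \in E_1} |e|\right) \pmod 2. \]
In the generic picture, $E_1$ consists of all edges entering $v$ from one side of the path of $s$ and $E_2$ of those on the other side; for $s$ transverse to the vertical direction at $v$, these are exactly the incoming edges and the outgoing edges (or vice versa). The degree-conservation convention at $v$ asserts that the sums of these two groups agree, so $\Delta\sigma = 0$.

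The main bookkeeping obstacle will be enumerating the local configurations carefully (relative orientations of $s$ with respect to $v$, and the fact that "incoming vs outgoing" may be interchanged by a slight reorientation of $s$), but in each subcase the calculation reduces to the single identity $\sum_{\text{in}}|e| = \sum_{\text{out}}|e|$ at $v$. No deeper input is needed, so the proof is essentially a direct two-case check.
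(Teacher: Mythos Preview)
Your proposal is correct and follows the same approach as the paper's proof: check invariance under each of the two elementary moves, using the degree-conservation rule at each vertex for the second. The paper's version is simply terser, dispatching both cases in one sentence each.

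One small remark: in your vertex argument you do not actually need to identify $E_1$ and $E_2$ with the incoming and outgoing edges. Since $E_1 \cup E_2$ is the full set of edges at $v$ and degree conservation gives $\sum_{\text{in}}|e| = \sum_{\text{out}}|e|$, the total $\sum_{e \ni v}|e|$ is even, hence $\sum_{E_1}|e| \equiv \sum_{E_2}|e| \pmod 2$ for \emph{any} complementary partition. This spares you the bookkeeping you anticipated about relative orientations of $s$.
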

\begin{proof}
It is trivial that moving a strand over a crossing does not change the sign. 
When one moves a strand over a vertex, the sign does not change because of the assumption that the sum of the degrees going into the vertex is equal to the sum of the degrees going out.
\end{proof}

If we assign multilinear operations to the vertices in our sign diagram, then the sign diagram gives us a prescription for composing the operations: by convention, this composition gets multiplied by the sign associated to the sign diagram. 
By Lemma \ref{lem:isotsign}, isotopic sign diagrams give the same sign, and they also obviously give the same composition of operations: so they represent the same composed operation.

\subsection{Sign diagrams for operations in this paper}

In this section we give the sign diagrams associated to some of the more complicated formulae in this paper. 

\begin{figure}
\centering
\includegraphics[width=0.8\textwidth]{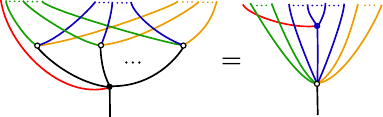}
\caption{The $A_\infty$ multifunctor equations (see Definition \ref{defn:multifun}). 
The $A_\infty$ structure maps are represented by solid dots, whereas the multi-functor maps are represented by open dots. 
We have illustrated the case of an $A_\infty$ tri-functor, and only show one of the three types of diagrams on the right-hand side. \label{fig:ainfmultifun}}
\end{figure}

\begin{figure}[h]
\centering
\subfigure[The $F^{0;s;t}$ part of the tri-functor.]{
\includegraphics[width=0.3\textwidth]{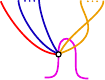}
\label{subfig:bimodmultifunl}}
\subfigure[The $F^{1;s;t}$ part of the tri-functor.]{
\includegraphics[width=0.3\textwidth]{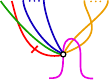}
\label{subfig:bimodmultifun2}}
\caption{The $A_\infty$ tri-functor $F: A_\infty([\EuC,\EuD]) \times \EuC \times \EuD^{op} \dashrightarrow A_\infty(\modules \BbK)$ of Lemma \ref{lem:trifun}.\label{fig:bimodmultifun}}
\end{figure}

\begin{figure}[!h]
\centering
\subfigure[The chain map $F_*: CC_\bullet(\EuC_1) \otimes CC_\bullet(\EuC_2) \otimes CC_\bullet(\EuC_3) \to CC_\bullet(\EuD)$ associated to an $A_\infty$ tri-functor $F: \EuC_1 \times \EuC_2 \times \EuC_3 \dashrightarrow \EuD$ (see Lemma \ref{lem:multifunhoch}).]{
\includegraphics[width=0.65\textwidth]{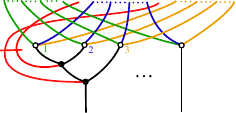}
\label{subfig:multifunhoch}}
\subfigure[The Mukai pairing, and the higher residue pairing (see Propositions \ref{prop:mukform} and \ref{prop:higherresform}).]{
\includegraphics[width=0.3\textwidth]{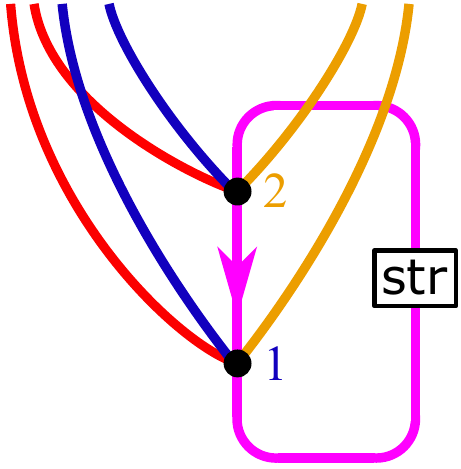}
\label{subfig:mukform}}
\caption{
In these sign diagrams, we sum over cylic permutations of the inputs in each $CC_\bullet(\EuC_i)$, corresponding to sweeping some number of strands from front to back; however we only sum over the permutations of $CC_\bullet(\EuC_j)$ such that the initial term $c_0^j$ gets input to the vertex labelled `$j$'. In the Mukai pairing on the right, the loop carries its unreduced sign, whereas all other strands carry reduced signs.}
\end{figure}

\newpage

\bibliographystyle{amsalpha}
\bibliography{../library}

\end{document}